\DeclareMathAlphabet{\mathpzc}{OT1}{pzc}{m}{it}
\newcommand{\vertiii}[1]{{\left\vert\kern-0.25ex\left\vert\kern-0.25ex\left\vert #1 
    \right\vert\kern-0.25ex\right\vert\kern-0.25ex\right\vert}}
\numberwithin{equation}{section} 
\newtheorem{theorem}{Theorem}[section]
\newtheorem{proposition}[theorem]{Proposition}
\newtheorem{lemma}[theorem]{Lemma}
\newtheorem{remark}[theorem]{Remark}
\newcommand{\jeps}{\mathcal{J}_\kappa}
\newcommand{\jkap}{\mathcal{J}_\epsilon}
\newcommand{\jdel}{\mathcal{J}_\delta}
\newcommand{\pat}{\partial_t}
\def\bdy #1{{\partial #1\hspace{1pt}}}
\def\cls #1{\overline {#1}}
\def\bbR{{\mathbb R}}
\def\bbT{{\mathbb T}}
\def\bbZ{{\mathbb Z}}
\def\div{{\operatorname{div}}}
\def\curl{{\operatorname{curl}}}
\def\id{{\text{Id}}}
\def\V{{\mathcal V}}
\def\rN{{\rm N}}
\def\rT{{\rm T}}
\def\p{{\partial\hspace{1pt}}}
\def\jump#1{{[\hspace{-2pt}[#1]\hspace{-2pt}]}}
\def\bigjump#1{{\big[\hspace{-3pt}\big[#1\big]\hspace{-3pt}\big]}}
\def\Bigjump#1{{\Big[\hspace{-4.5pt}\Big[#1\Big]\hspace{-4.5pt}\Big]}}
\def\({{(\hspace{-2pt}(}}
\def\){{)\hspace{-2pt})}}
\def\smallexp#1{{\text{\small #1}}}
\def\XXint#1#2#3{{\setbox0=\hbox{$#1{#2#3}{\int}$}
\vcenter{\hbox{$#2#3$}}\kern-.5\wd0}}
\title[The well-posedness of the Muskat equations]{Well-posedness of the Muskat problem with  $H^2$ initial data}
\author[C.H. A. Cheng]{C.H. Arthur Cheng}
\email{cchsiao@math.ncu.edu.tw}
\address{Department of Mathematics, National Central University, Jhongli City, Taoyuan County, 32001, Taiwan ROC}
\author[R.Granero-Belinch\'{o}n]{Rafael Granero-Belinch\'{o}n}
\email{rgranero@math.ucdavis.edu}
\address{Department of Mathematics, University of California, Davis, CA 95616, USA}
\author[S. Shkoller]{Steve Shkoller}
\email{shkoller@math.ucdavis.edu}
\address{Department of Mathematics, University of California, Davis, CA 95616, USA}
\subjclass{35R35, 35Q35, 35S10, 76B03}
\keywords{Muskat problem, moving interfaces, free-boundary problems, regularity theory, Hele-Shaw}
\date{December 23, 2014}
\begin{document}

\begin{abstract}
We study the dynamics of the interface between two incompressible fluids in a two-dimensional porous medium whose flow is
modeled by the Muskat equations. For the two-phase Muskat problem, we establish global well-posedness and {\it decay to equilibrium} for small 
$H^2$ perturbations of the rest state. For the one-phase Muskat problem, we prove local well-posedness for $H^2$ initial data of arbitrary size.  
Finally,  we show that  solutions to the Muskat equations  instantaneously   become infinitely smooth.
\end{abstract}

\maketitle
{\small
\tableofcontents}

\section{Introduction}
We consider the  two-phase Muskat moving free-boundary problem:
\begin{subequations}\label{laplacian}
\begin{alignat}{2}
\Delta P^\pm &= 0 \qquad&&\text{in}\quad\Omega^\pm(t)\,,\\
\jump{P} &= \Gamma(t)\cdot e_2 &&\text{on }\Gamma(t),\\
\jump{\nabla P\cdot n} &= 0 &&\text{on }\Gamma(t),\\
\partial\Omega^+(t)\cap \partial\Omega^-(t) &= \Gamma(t) && \forall\,t\geq0\,,\\
\V(\Gamma(t)) &= -\nabla P^\pm \cdot n \qquad&&\text{on}\quad\Gamma(t)\,,
\end{alignat}
\end{subequations}
where  $\Omega^+(t)$ and $\Omega^-(t)$ denote the time-dependent fluid domains associated with the two phases, $\Gamma(t)$ denotes the free 
boundary, $\Gamma(t)\cdot e_2$ is the second component of its parametrization, and $\V(\Gamma(t))$ is its normal velocity. We use the notation $\jump{f} = f^+ - f^-$ to  denote the jump of a function $f$ across $\Gamma(t)$. The problem (\ref{laplacian}) arises in the literature as the Hele-Shaw cell (with gravity) or the Muskat problem.

Many recent results on the Muskat problem rely on the fact that equations (\ref{laplacian}a-e) can be rewritten as a system of equations for the interface
$$
\Gamma(t) = (\psi_1(t,x_1), \psi_2(t,x_1)), \qquad x_1 \in \mathbb{R}  \,, \ \ t \in [0,T]\,,
$$
taking  the form
$$
\pat \psi=T[\psi],
$$
where $T[\psi]$ is a  highly nonlinear  singular integral operator, whose linearization (about a flat interface) behaves like  $\sqrt{-\Delta}$.
In order to establish existence theorems for the system (\ref{laplacian}),
this singular-integral-operator approach makes  extensive use of the explicit integral kernel representations for the operator $T$ for 
 the following fluid domains (or geometries):
 \begin{alignat*}{2}
&\text{\bf(a)} \ \     \overline{\Omega^+(t)}\cup \Omega^-(t)  && =   \bbR^2 \,, \\
&\text{\bf(b)} \ \   \overline{\Omega^+(t)}\cup \Omega^-(t)   && =   \bbT\times\bbR\,,\\
&\text{\bf(c)} \ \   \overline{\Omega^+(t)}\cup \overline{\Omega^-(t)}   && =   \bbR\times[-l,l] \,.
\end{alignat*} 
In the case of  general domain geometries, we are not aware of any existence and regularity theories.

The classical problem (\ref{laplacian}a-e) is related to both the (two-phase) Stefan problem
\begin{subequations}\label{stefan}
\begin{alignat}{2}
\partial_tP^\pm-\Delta P^\pm &= 0 \qquad&&\text{in}\quad\Omega^\pm(t)\,,\\
P^\pm&= 0 &&\text{on }\Gamma(t),\\
\partial\Omega^+(t)\cap \partial\Omega^-(t) &= \Gamma(t) && \forall\,t\geq0\,,\\
\V(\Gamma(t)) &= \jump{\nabla P\cdot n} \qquad&&\text{on}\quad\Gamma(t)\,,
\end{alignat}
\end{subequations}
and also with the Muskat problem with variable permeability $\beta(x)$,
\begin{subequations}\label{laplacianpermeability}
\begin{alignat}{2}
\text{div}\,\left(\beta(x)\nabla P^\pm\right) &= 0 \qquad&&\text{in}\quad\Omega^\pm(t)\,,\\
\jump{P} &= \Gamma(t)\cdot e_2 &&\text{on }\Gamma(t),\\
\jump{\nabla P\cdot n} &= 0 &&\text{on }\Gamma(t),\\
\partial\Omega^+(t)\cap \partial\Omega^-(t) &= \Gamma(t) && \forall\,t\geq0\,,\\
\V(\Gamma(t)) &= -\nabla P^\pm \cdot n \qquad&&\text{on}\quad\Gamma(t)\,.
\end{alignat}
\end{subequations}

Herein,  we introduce a new method to analyze the system  (\ref{laplacian}a-e), which is based on the analysis of the partial differential 
equations rather than any associated integral kernel.   Our methodology can treat the two-phase 
Muskat problem  with two different viscosities or with a non-constant permeability.   Our  method can also be applied to the Stefan 
problem \cite{HaSh2014}, to the free-boundary problem for the incompressible Euler equations \cite{CoSh2007,CoSh2014}, as well as
to the compressible Euler equations \cite{CoSh2012, CoHoSh2013} . 
One of the main interests of this new method is that it can be adapted to several space dimensions and arbitrary domain geometries 
$
\Omega^+(t)\cup \Omega^-(t).
$

\subsection{Darcy's law}
The Muskat problem, introduced in \cite{Muskat}, models the evolution of two fluids of varying density in a two-dimensional porous medium. 
The presence of the solid matrix inside the porous medium has an important consequence:   the usual fluid equations for the conservation 
of momentum are replaced with the {\it empirical}  Darcy's Law (see \cite{bear, NB}) given by
\begin{equation}\label{basic-model}
\frac{\mu}{\beta}u=-\nabla p-(0,g\rho)^T,
\end{equation} 
where $\mu,\rho$ are the viscosity and the density of the fluid, respectively,  $\beta$ is the permeability of the medium, $p$ is the pressure, and $g$ is the 
acceleration due to gravity.   
 As (\ref{basic-model}) is a model of aquifers, oil wells or geothermal reservoirs, this problem is of practical importance in geoscience (see, for 
 example, \cite{CF,Parseval-Pillai-Advani:model-variation-permeability} and the references therein);  moreover, it has also been considered as a model for the velocity of cells in tumor growth (see \cite{F, P}).   

The movement of a fluid trapped between two parallel vertical plates, which are separated by a very narrow distance, is known as the Hele-Shaw  cell problem (see \cite{HeleShaw:motion-viscous-fluid-parallel-plates}). The equations of motion in a Hele-Shaw cell are
$$
\frac{12\mu}{d^2}u=-\nabla p-(0,g\rho)^T,
$$
where $d$ is the distance between the plates. The similarity of both problems is obvious and, in fact, the Muskat problem is equivalent to the two-phase Hele-Shaw problem with gravity.

\subsection{The Muskat problem set in various geometries}
We shall consider various domain geometries in this paper, and we begin with the case of a domain with infinite depth.
\subsubsection{The infinitely-deep case} Let $(u^\pm,p^\pm)$ denote the velocity and the pressure in the fluid domains $\Omega^\pm(t)$, and let $\Gamma(t)$ denote the material interface between $\Omega^+(t)$ and $\Omega^-(t)$; that is, $\Gamma(t) = \cls{\Omega^+(t)}\cap \cls{\Omega^-(t)}$. Setting,  the permeability $\beta\equiv1$, the two-phase Muskat problem has the following Eulerian description:
\begin{subequations}\label{HS_Eulerian}
\begin{alignat}{2}
\mu^\pm u^\pm + \nabla p^\pm &= - \rho^\pm e_2 \qquad&&\text{in}\quad\Omega^\pm(t)\,,\\
\operatorname{div}  u &= 0 &&\text{in}\quad\Omega^\pm(t)\,,\\
\V(\Gamma(t)) &= u^\pm \cdot n \qquad&&\text{on}\quad\Gamma(t)\,,\\
\Omega^\pm(0) &= \Omega^\pm &&\text{on}\quad\{t=0\}\,,\\
\overline{\Omega^+(t)}\cup\Omega^-(t) &= \bbR^2 &&\text{for every }\quad t\geq 0\,,
\end{alignat}
\end{subequations}
where $e_2 = (0,1)$, $n(\cdot ,t)$ is the outward pointing unit normal on $\partial\Omega^-(t)$. In particular, we consider the case that
$$\Gamma(t) = (x_1, h(x_1,t))$$
is the graph of the height function $h(x_1,t)$, and  we assume that either $x_1 \in \mathbb{T}  ^1$, or that $x_1 \in \mathbb{R}  ^1$ and that $h(x_1,t)$
vanishes  at infinity.  It follows that the two time-dependent fluid domains $\Omega^\pm(t)$ are given by
$$
\Omega^+(t) = \big\{(x_1,x_2)\,\big|\, x_2 > h(x_1,t)\big\}\,,\quad \Omega^-(t) = \big\{(x_1,x_2)\,\big|\, x_2 < h(x_1,t)\big\}\,. 
$$
Since $ \operatorname{div} u^\pm=0$, we must have that $\jump{u\cdot n}=0$ on $\Gamma(t)$;  furthermore, as we assume that the effect of surface tension is negligible\footnote{Our methodology can treat the Muskat problem with surface tension in the same way.},  
we set 
$$\jump{p}= 0 \text{ on  } \Gamma(t)\,.$$

\subsubsection{The finitely-deep case with general geometry}
We shall additionally consider  geometries  which generalize the \emph{infinitely-deep}  case that $\Omega^+(t)\cup\Omega^-(t)=\bbR^2$ or
the \emph{confined} case that
 $\overline{\Omega^+(t)}\cup\overline{\Omega^-(t)}=\bbR\times[-l,l]$ (and $\|h\|_{L^\infty}<l$). 
  
Let $\tilde{t}(x_1)$ and $\tilde{b}(x_1)$ be two smooth functions. Given two constants $c_t>0$, $c_b<0$, we write 
$$
b(x_1)=c_b+\tilde{b}(x_1),\;t(x_1)=c_t+\tilde{t}(x_1).
$$ 
We assume that the the two fluids flow in  bounded domains of the type 
$$
\Omega^+(t)\cup\Omega^-(t)=\{(x_1,x_2),\,b(x_1)<x_2<t(x_1)\},\text{ for every } t\geq0; \eqno{\rm(\ref{HS_Eulerian}e')}
$$
thus,  each phase is given by
$$
\Omega^+(t)=\{(x_1,x_2),\, x_1\in\bbR,\,h(x,t)<x_2<t(x)\},
$$
and
$$
\Omega^-(t)=\{(x_1,x_2),\, x_1\in\bbR,\,b(x)<x_2<h(x,t)\}.
$$
Note that additional impervious boundary conditions must  be added to the system \eqref{HS_Eulerian} on the {\it fixed} bottom and top boundaries.   
These are given by
$$
u\cdot n=0\text{ at }\partial(\overline{\Omega^+(t)}\cup\overline{\Omega^-(t)}). \eqno{\rm(\ref{HS_Eulerian}f)}
$$ 
Finally, we assume that the initial height function $h_0$ satisfies
$$
b(x_1)<h_0(x_1)<t(x_1).
$$
\subsubsection{The one-phase Muskat problem} We shall also  consider  the one-phase Muskat problem, corresponding to the case that
 $(\mu^+,\rho^+)=(0,0)$. In other words, only one fluid flows through the  porous medium,  and the ``top'' phase corresponds to vacuum. 
 Furthermore, we consider the case that the interface is periodic (so $x_1\in\bbT$). Then,  our time-dependent domain is given by
$$
\Omega(t) = \bbT\times\big\{c_b<x_2 < h(x_1,t)\big\}\text{ for every }\quad t\geq 0\,,
$$
with moving boundary
$$
\Gamma(t) = \bbT\times\big\{x_2 = h(x_1,t)\big\}\text{ for every }\quad t\geq 0\,.
$$
To simplify notation for the one-phase problem, we set $(\mu^-,\rho^-)=(1,1)$. 
We  again use $(u,p)$ to denote  the velocity and the pressure of this fluid in the fluid domain 
$\Omega(t)$ with 
free boundary $\Gamma(t)$.  The one-phase Muskat  problem is written as

\begin{subequations}\label{HS_Eulerian_Onephase}
\begin{alignat}{2}
u+ \nabla p &= - e_2 \qquad&&\text{in}\quad\Omega(t)\,,\\
\div u &= 0 &&\text{in}\quad\Omega(t)\,,\\
\V(\Gamma(t)) &= u \cdot n \qquad&&\text{on}\quad\Gamma(t)\,,\\
u \cdot e_2 &= 0\qquad&&\text{on}\quad\{x_2=c_b\}\,,\\
p &= 0 \qquad&&\text{on}\quad\Gamma(t)\,,
\end{alignat}
\end{subequations}
where $e_2 = (0,1)$, $n(\cdot ,t)$ is the outward pointing unit normal on $\Gamma(t)$, and $\V(\Gamma(t))$ is the normal velocity of $\Gamma(t)$. 
 As we only have one phase,  (\ref{HS_Eulerian_Onephase}e) expresses the continuity of the pressure on $\Gamma(t)$.  Note, also,  that we have added
the  impermeable boundary condition on the fixed bottom boundary in(\ref{HS_Eulerian_Onephase}d).

\subsection{The Rayleigh-Taylor stability condition}
The Rayleigh-Taylor stability (or sign) condition  is defined as
\begin{equation*}
RT(t)=\Bigjump{\frac{\partial p}{\partial n}}=-(\nabla p^-(\Gamma(t))-\nabla p^+(\Gamma(t)))\cdot n>0.
\end{equation*}
Due to the incompressibility of the fluids, and using Darcy's law together with the fact that the curve can be parametrized as a graph, the Rayleigh-Taylor stability condition reduces to the following expression:
\begin{equation}\label{RTstable}
RT(t)=(\mu^--\mu^+)u\cdot n+\frac{\rho^--\rho^+}{\sqrt{1+(h'(x))^2}}=-\jump{\mu}u\cdot n-\frac{\jump{\rho}}{{\sqrt{1+(h'(x))^2}}} >0.
\end{equation}
In particular, for the case of two equal viscosities $\mu^-=\mu^+$, the fluids are in the stable regime if the lighter fluid is above the heavier fluid.
 Our research focuses on the stable case, so, henceforth, we shall assume that $\jump{\rho}<0$.

Note  that in the one-phase Muskat problem, the Rayleigh-Taylor stability condition reduces to
\begin{equation}\label{RTonephase}
RT(t)=-\nabla p^-(\Gamma(t))\cdot n>0.
\end{equation}
This stability condition is ubiquitous in free boundary problems;  it  also appears  in the  Stefan problem, the water waves problem, the incompressible Euler
equations, the compressible Euler equations with physical-vacuum boundary, and the MHD equations.  
When the initial data  does not verify the Rayleigh-Taylor stability condition, then the Muskat problem is ill-posed (see, for instance, \cite{c-g07,CGO}).   
It has also been shown  for the Muskat problem that there exists initial data such that the Rayleigh-Taylor stability condition can  break-down in finite time \cite{ccfgl,CGO, GG}.

We note that if the height function $h( \cdot , t) $ (which represents the moving interface $\Gamma(t)$) is small in certain norms, and if we assume that $\jump{\rho}<0$, 
then the Rayleigh-Taylor stability condition is achieved 
without any other hypothesis on the initial data. In the case of the unbounded, one-phase Muskat problem,  it is known that the Rayleigh-Taylor stability 
condition is automatically satisfied due to Hopf's Lemma and Darcy's Law (see \cite{ccfgonephase});  however, in the one-phase case with a  flat, bounded
 domain, it is not clear that the Rayleigh-Taylor stability condition is automatically satisfied, because of  a non-zero Neumann boundary  condition on the
 fixed bottom boundary.

\subsection{Prior results on the Muskat problem and related models}
Free-boundary problems for incompressible fluids in a porous medium have been extensively studied in recent years. 

For the Muskat problem with  fluids having the same viscosities ($\jump{\mu}=0$), the qualitative behavior for arbitraraly large initial data is well understood. In particular, for the infinitely-deep case, C\'ordoba \&  Gancedo proved the local existence of solutions for $H^3(\bbR)$ initial data in the stable Rayleigh-Taylor regime and the ill-posed character of the Muskat problem in the unstable Rayleigh-Taylor regime in \cite{c-g07}, a maximum principle for $\|h(t)\|_{L^\infty}$ in \cite{c-g09}, and local existence in the case with more than two phases in \cite{c-g10}. In a remarkable paper, Castro, C\'ordoba, Fefferman, Gancedo \& L\'opez-Fern\'andez \cite{ccfgl} proved the existence of turning waves, \emph{i.e.} interfaces such that there exists $T_{1}$ such that
$$
\limsup_{t\rightarrow T_{1}} \|h'(t)\|_{L^\infty}=\infty.
$$
Later, Castro, C\'ordoba, Fefferman \& Gancedo obtained in \cite{castro2012breakdown} the existence of curves showing finite-time singularities. These curves correspond to analytic initial data in the Rayleigh-Taylor stable regime such that there exists $T_{1}$ and $T_2$ such that, at $t=T_1$, the solution 
enters  the Rayleigh-Taylor unstable regime and later, at $t=T_2$, is no longer  $C^4$.

The confined case when the two viscosities are the same ($\jump{\mu}=0$) has been treated by C\'ordoba, Granero-Belinch\'on \& Orive \cite{CGO}. When the porous medium is inhomogeneous, the evolution of the interface has been studied by Berselli, C\'ordoba \& Granero-Belinch\'on \cite{BCG} and G\'omez-Serrano \& Granero-Belinch\'on \cite{GG}. Ambrose \cite{AmbroseST} studied the limit of zero surface tension  for initial data which satisfies (\ref{RTstable}). For further results, see also the review by Castro, C\'ordoba \& Gancedo \cite{Castro-Cordoba-Gancedo:recent-results-muskat}.

For the related Hele-Shaw cell problem,  Constantin \& Pugh \cite{Peter}, using complex analysis tools, proved the stability and exponential decay of solution. Chen \cite{chen1993hele} studied the two-phase Hele-Shaw problem with surface tension and proved global well-posedness for small enough initial interfaces. Elliot \& Ockendon \cite{elliott1982weak} proved the existence of weak solutions, while Escher \& Simonett \cite{ES} obtained local, classical solutions in multiple space dimensions. Escher \& Simonett \cite{escher1998center} proved global existence and stability near spherical shapes using center manifold theory.
The global existence and decay for solutions of the one-phase Hele-Shaw  problem with
various fluid injection-rates was studied by Cheng, Coutand \& Shkoller \cite{cheng2012global}. 

Returning to the Muskat problem,
when the initial data is assumed to be small in certain lower-order norms and the two fluid viscosities are equal, there are several available results for global-in-time solutions. In \cite{ccgs-10},  C\'ordoba,  Constantin, Gancedo \& Strain proved the global existence of $H^3$ Sobolev class solutions for initial data with \emph{small} derivative in the Wiener algebra
$A(\bbR)$, and global existence of Lipschitz (weak) solutions for initial data with 
\begin{equation}\label{sizecon}
\|h'_0\|_{L^\infty}<1.
\end{equation}
Therein, the authors also proved  an $L^2$ energy balance.   The global weak solution  of \cite{ccgs-10} was later 
extended to the confined case by Granero-Belinch\'on in \cite{G}.
 It is worth noting that, due to the effect of the impervious boundaries, the size restrictions on the data are not as clear as \eqref{sizecon} and for
the confined setting, involve $\|h_0\|_{L^\infty}, \|h'_0\|_{L^\infty},$ and the depth.

Very recently, in 
\cite{ccgs-13},  C\'ordoba,  Constantin,  Gancedo,  Rodr\'iguez-Piazza \& Strain obtained global existence for small data in the case of a two-dimensional interface;   furthermore, among other results, they proved the existence of a global solution in $H^2$ for data with \emph{small}  derivative in the Wiener algebra $A(\bbR)$,  and the existence of a global solution in 
$H^{1.5}$ if the initial data is also in the Wiener algebra $A(\bbR)$ and satisfies a smallness assumption. 
We remark that these global-in-time existence results are for initial data of {\it medium}-size, in the sense that initial data must be bounded
by constants of  $O(1)$.

In the case of two fluids with different viscosities, there are fewer results. The local existence for arbitrary $\mu^\pm$, $\rho^\pm$ and $H^3$ data was 
proven by  C\'ordoba, C\'ordoba \& Gancedo in \cite{c-c-g10}.   In the case of surface tension,  Escher \&  Matioc \cite{e-m10} and  Escher, Matioc \& Matioc \cite{escher2011generalized} established local and global existence, and stability,  in the little H\"{o}lder spaces.

The singularity  formation for the one-phase case (when $\mu^+=\rho^+=0$) has been studied by Castro, C\'ordoba, Fefferman \& Gancedo in \cite{ccfgonephase} where they proved the existence of the so-called interface ``splash'' singularity  wherein a locally smooth interface self-intersects at a point.  C\'ordoba \& Pern\'as-Casta\~no in \cite{cponephase} proved the non-existence of ``splat'' singularity, in which a locally smooth interface self-intersects on a curve. Gancedo \& Strain \cite{gancedo2013splasnqgmsukat} proved that the Muskat problem with three different fluids cannot develop a ``splash'' singularity in finite time.    In related work, Fefferman, Ionescu \& Lie \cite{FeIoLi2013} and Coutand \& Shkoller \cite{CoSh20142} have shown that a finite-time splash singularity  cannot occur for the two-fluid Euler equations.

Very closely related to the Hele-Shaw and Muskat models, the  Stefan problem (\ref{stefan}a-d) is a model of phase transition, and serves as yet another
example of a classical free-boundary problem.  One fundamental difference, however, with the Muskat problem is that there does not exist a contour dynamics description of the free-boundary evolution; on the other hand, it has been widely studied using a variety of parabolic PDE methods.
 For instance, the existence of classical solutions \emph{with derivative loss} was obtained by Meirmanov \cite{meirmanov1992stefan}, while the regularity of the free boundary was treated by Kinderlehrer \& Nirenberg in a series of papers \cite{kinderlehrer1977regularity, kinderlehrer1978smoothness},
 wherein  they showed  that if the free boundary is $C^1$ and the temperature $P$ satisfy certain conditions, the interface is analytic in space and of Gevrey class in time. More recently, Had\v{z}i\'{c} \& Shkoller \cite{hadzic5817well, HaSh2014} proved the local and global existence  \emph{without derivative loss},
 as well as the decay of solutions to equilibrium states. 

%

\subsection{Well-posedness for $H^s$ data with $s\leq 2.5$}

Mathematically, an $H^s$ well-posedness result,  with $s\leq 2.5$, for \eqref{contour} and \eqref{contour2} is challenging because the usual energy estimates indicate that $\|h\|_{C^{2+\delta}}$ is the quantity in the \emph{available} continuation criterion (see \cite{c-g07, CGO}).

As we have already noted,  most prior existence theorems have  relied upon the contour equations for the interface, which, in the case of  the infinitely-deep, unconfined 
Muskat problem is given as
\begin{equation}\label{contour}
\partial_t h=\text{p.v.}\int_\bbR\frac{(h'(x_1)-h'(x_1-y))y}{y^2+(h(x_1)-h(x_1-y))^2}dy,
\end{equation}
 and for  the finitely-deep medium, confined Muskat problem (with domain $\bbR\times[-l,l]$) as 
\begin{align}\label{contour2}
\partial_t h& =\text{p.v.}\int_\bbR\frac{(h'(x_1)-h'(x_1-y))\sinh(y)}{\cosh(y)-\cos(h(x_1)-h(x_1-y))}dy 
+\text{p.v.}\int_\bbR\frac{(h'(x_1)+h'(x_1-y))\sinh(y)}{\cosh(y)+\cos(h(x_1)+h(x_1-y))}dy \,.
\end{align} 
 These contour equations are obtained from the Birkhoff-Rott integral   together with the following expression for the vorticity:
$$
\omega(x_1, x_2,t)=\varpi(x_1,t)\delta_{\Gamma(t)},
$$
where $x_1 \in \mathbb{R}  $  parametrizes $\Gamma(t)$, 
$\varpi(x_1,t)$ is the amplitude of vorticity, and $\delta_{\Gamma(t)}$ is the Dirac  delta-distribution which is a function of $(x_1,x_2)$ on the moving
interface $\Gamma(t) \subset  \mathbb{R}^2  $.
 In particular, as the contour equations use the kernel for the operator $\nabla^\perp \Delta^{-1}$, there have been no prior existence theorems for arbitrary domain geometries.

In the case that $\jump{\mu}=0$, the contour equations  have a significant simplification with respect to the case of two different viscosities. 
This is due to the fact that, if $\jump{\mu}=0$, the amplitude of the vorticity is $\varpi=\jump{\rho} h'$; however, in the case with two different 
viscosities, the amplitude for the vorticity $\varpi$ verifies the integral equation
$$
-(\rho^2-\rho^1)h'(x_1)=\left(\mu^2-\mu^1\right)\text{p.v.}\int_\bbR \varpi(\beta) \mathcal{B} (x_1,h(x_1),\beta,h(\beta))d\beta\cdot (1,h'(x_1))+\left(\frac{\mu^2+\mu^1}{2}\right)\varpi,
$$
where $\mathcal{B} $ denotes the kernel of $\nabla^\perp\Delta^{-1}$ (which depends on the domain). For instance, if the union of the two fluid domains is
$ \mathbb{R}^2  $,  then
$$
\mathcal{B} (x_1,x_2,y_1,y_2)=\left(-\frac{x_2-y_2}{(x_2-y_2)^2+(x_1-y_1)^2}, \frac{x_1-y_1}{(x_2-y_2)^2+(x_1-y_1)^2}\right).
$$
Thus, to write the amplitude of the vorticity in terms of the interface, one needs to invert an operator as in C{\'o}rdoba,  C{\'o}rdoba, \& Gancedo \cite{c-c-g10}. This is a difficult issue, and with our method, we are able to avoid it entirely.

\section{Statement of the main theorems}

Our first result is
 
\begin{theorem}[$H^2$ local well-posedness for the two-phase problem]\label{localsmall}
Let $h_0\in H^2(\bbR)$ be the initial height function and  let $\mu^\pm,\,\rho^\pm>0,$ be fixed constants. 
Then for every arbitrarily small $s>0$ there exist small enough constants $\sigma_s$, $\tilde{\sigma}$, $T(h_0)>0$, such that if either
\begin{enumerate}
\item (for the infinitely-deep Muskat problem (\ref{HS_Eulerian}a-e)) if
\begin{equation}\label{cgs1}
\|h_0\|_{H^{1.5+s}(\bbR)}< \sigma_s 
\end{equation} 
or
\item (for the confined Muskat problem (\ref{HS_Eulerian}a-d,e',f)) if
$$
\|h_0\|_{H^{1.5+s}(\bbR)}< \sigma_s 
$$
$$
\max\{|\tilde{t}|_2,|\tilde{b}|_2\}\leq\tilde{\sigma},
$$ 
\end{enumerate}
then
there exists a unique local-in-time solution
$$
h\in C([0,T(h_0)];H^2(\mathbb{R}))\cap L^2(0,T(h_0);H^{2.5}(\mathbb{R})).
$$
Moreover, this solution verifies
$$
\|h(t)\|_{L^2(\bbR)}^2+\int_0^t\|\sqrt{\mu^+} u^+(\mathfrak{t})\|_{L^2(\Omega^+(\mathfrak{t}))}^2d \mathfrak{t}  
+\int_0^t\|\sqrt{\mu^-} u^-(\mathfrak{t}  )\|_{L^2(\Omega^-(\mathfrak{t}  ))}^2d\mathfrak{t}  =\|h_0\|_{L^2(\bbR)}^2,
$$
and
$$
\max_{0\leq s\leq T(h_0)}\{\|h(s)\|_{H^2(\bbR)}^2\}+\int_0^{T(h_0)}\|h(s)\|_{H^{2.5}(\bbR)}^2ds\leq C_1\|h_0\|_{H^2(\bbR)}^2,
$$
for a fixed constant $C_1$.
\end{theorem}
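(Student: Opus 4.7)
The strategy is to exploit the PDE-based approach advertised in the introduction: flatten the moving interface with an ALE change of variables, recast the Muskat system as a divergence-form transmission problem for the pressure coupled to a scalar evolution equation for $h$ on the fixed flat boundary, and close an $H^2$ energy estimate using the parabolic smoothing at the level of $\sqrt{-\partial_{x_1}^2}$ provided by the linearization around $h=0$.

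\textbf{Step 1 (flattening and reformulation).} I would introduce the diffeomorphism $\psi(x_1,x_2,t) = (x_1,\, x_2 + \bh(x_1,x_2,t))$, where $\bh$ is a harmonic (or Poisson-type) extension of $h$ equal to $h$ on $\{x_2=0\}$ and decaying as $|x_2|\to\infty$ (with the obvious modification in the confined case so that $\bh$ also vanishes on the fixed top and bottom). With $\bfA=(\nabla\psi)^{-1}$, $\bfJ=\det\nabla\psi$, $\bfq^\pm=p^\pm\circ\psi$, $\bfv^\pm=u^\pm\circ\psi$, Darcy's law and incompressibility become
\begin{equation*}
\mu^\pm\bfv^\pm+\bfA^T\nabla\bfq^\pm=-\rho^\pm e_2,\qquad \bfA^{ji}\partial_j\bfv^{\pm,i}=0\qquad\text{in }\bbR\times\bbR^\pm,
\end{equation*}
subject to $\jump{\bfq}=h$ and $\jump{\bfA^T\nabla\bfq\cdot e_2}=0$ on $\{x_2=0\}$, with the kinematic identity $\partial_t h = \bfv^\pm\cdot e_2$ on $\{x_2=0\}$. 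Eliminating $\bfv^\pm$ gives a divergence-form elliptic transmission problem whose coefficient matrix is $\mu^\pm\,^{-1}\id$ when $h\equiv0$ and is an $H^{1.5}$-perturbation of it when $h\in H^2$.

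\textbf{Step 2 (energy estimate).} Differentiating the elliptic problem at the fractional order $\Lambda^{1.5}$ (with $\Lambda=(-\partial_{x_1}^2)^{1/2}$), testing with $\Lambda^{1.5}\bfq^\pm$, and using the transmission condition $\jump{\bfq}=h$ together with sharp trace inequalities yields $\|\nabla\bfq^\pm\|_{H^{1.5}}^2\lesssim\|h\|_{H^{2.5}(\bbR)}^2+\text{l.o.t.}$ Combined with the identity $\frac12\frac{d}{dt}\|h\|_{H^2(\bbR)}^2 = (\Lambda^2 h,\Lambda^2(\bfv^\pm\cdot e_2)|_{x_2=0})_{L^2}$ and Darcy's law, this delivers
\begin{equation*}
\tfrac12\tfrac{d}{dt}\|h\|_{H^2(\bbR)}^2+\tfrac{|\jump{\rho}|}{2(\mu^++\mu^-)}\|h\|_{H^{2.5}(\bbR)}^2\leq C(\|h\|_{H^{1.5+s}})\,\|h\|_{H^2(\bbR)}\,\|h\|_{H^{2.5}(\bbR)},
\end{equation*}
where the right-hand-side constant is small when $\|h\|_{H^{1.5+s}(\bbR)}<\sigma_s$. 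A continuity argument propagating this smallness on $[0,T(h_0)]$ absorbs the right-hand side into the dissipative term, yielding the $H^2\cap L^2_tH^{2.5}$ bound. The $L^2$ identity follows directly by testing (\ref{HS_Eulerian}a--c) against $\bfq^\pm$ and integrating by parts across $\Gamma(t)$, where the boundary term produces $\frac12\frac{d}{dt}\|h\|_{L^2}^2$. Existence is then obtained by a mollification/Galerkin approximation scheme whose uniform-in-parameter estimates pass to the limit via Aubin--Lions; uniqueness is a parallel Gronwall estimate on the difference of two solutions at the $L^2$ level, again absorbing quadratic terms into the $\Lambda$-dissipation.

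\textbf{Main obstacle.} The hardest step is the $H^{2.5}$ elliptic estimate for $\bfq^\pm$ with coefficients lying only in $H^{1.5}$---borderline regularity that is not quite Lipschitz. Top-order commutators of the form $[\Lambda^{1.5},\bfA^T\bfA-\id]\nabla\bfq$ are not controlled by the naive product estimate, and only the smallness of $\bfA-\id$ in the strictly-stronger norm $H^{1.5+s}$ (inherited from the hypothesis on $h$) provides the margin needed to absorb them on the left-hand side. Choosing $s>0$ arbitrarily small but positive is precisely what makes this borderline absorption work, which is why the smallness must be imposed in $H^{1.5+s}$ rather than in $H^2$. In the confined case, the extra smallness of $\tilde{t}$ and $\tilde{b}$ plays the analogous role of keeping the transmission problem a small perturbation of the flat-bottom/flat-top problem.
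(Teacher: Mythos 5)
Your overall strategy --- flattening via an ALE map, recasting Darcy's law as a divergence-form transmission problem for the pressure, and closing an $H^2$ energy estimate that gains half a derivative in $L^2_t$ --- is indeed the one the paper takes. However, the mechanism you propose for the parabolic gain is different from the paper's and is the step where your proposal has a genuine gap. The paper does not extract the dissipation by applying $\Lambda^{1.5}$ to the pressure and pairing $(\Lambda^2 h,\Lambda^2 h_t)$ with the linearized $\Lambda$-structure. Instead, it introduces the semi-ALE velocity $w^\pm = J^\pm A^\pm v^\pm$ (so that $\div w^\pm=0$ exactly, not just up to $A$-weighted derivatives), performs an energy estimate with two \emph{tangential} derivatives of the momentum equation tested against $w''$, and then recovers full $H^2$-regularity of $w$ from a Hodge-type decomposition (Proposition~\ref{Hodge}). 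The $H^{2.5}$-regularity of $h$ is subsequently read off from the \emph{tangential} jump of Darcy's law across the interface, which yields the pointwise algebraic identity
\begin{equation*}
-\jump{\rho}\,h^{\kappa\kappa\prime\prime}
= \jump{\mu}\,(v\cdot n)\sqrt{1+(h^{\kappa\kappa\prime})^2}\,h^{\kappa\kappa\prime\prime}
+ \big(1+(h^{\kappa\kappa\prime})^2\big)\,\jump{\mu\,v'\cdot(1,h^{\kappa\kappa\prime})}
\end{equation*}
(equation~(\ref{h_reg_eq}) in the paper), so that $|h|_{H^{2.5}}\lesssim\|w\|_{H^2}$ directly. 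This identity is what makes the half-derivative gain structural rather than a semigroup consequence, and it is crucial for handling $\jump{\mu}\neq 0$; without something of this sort, the dissipation you assert ``is delivered'' requires you to show that $(\Lambda^2 h,\Lambda^2 h_t)$ has a coercive quadratic part even when the interface velocity involves the non-local, $\mu$-dependent operator coming from the two-phase transmission problem, which you do not do.

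Two smaller but real errors in your formulation suggest you haven't worked through the transmission structure carefully. First, with $\bfq^\pm = p^\pm\circ\psi$ and Darcy's law written as $\mu^\pm \bfv^\pm + \bfA^T\nabla\bfq^\pm = -\rho^\pm e_2$, the no-surface-tension condition $\jump{p}=0$ gives $\jump{\bfq}=0$, not $\jump{\bfq}=h$; the jump becomes proportional to $h$ only if you pass to the hydrostatically shifted pressure $Q^\pm = q^\pm + \rho^\pm x_2$ evaluated on the physical interface $x_2 = h$, and even then the factor is $\jump{\rho}$, not~$1$. Second, the kinematic condition in ALE variables is $h_t = v\cdot(-h',1) = (JAv)\cdot e_2 = w\cdot e_2$, not $h_t = v\cdot e_2$; the distinction is exactly why the paper introduces $w$, since $w\cdot e_2$ obeys a genuine jump/Dirichlet-to-Neumann relation whereas $v\cdot e_2$ alone does not. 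You correctly identify the borderline $H^{1.5}$ elliptic estimate (with coefficients only in $H^{1.5}$) as the hard point and the role of the $H^{1.5+s}$ smallness in absorbing the top commutator; the paper handles that in exactly this spirit (compare the proof of~(\ref{Q1.5})). But the route from the elliptic estimate to the claimed $\|h\|_{H^{2.5}}^2$ dissipation is precisely what is missing from your proposal.
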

We remark that the constants appearing in this theorem depend on the physical parameters $\mu^\pm,\rho^\pm>0$.

The proof of this result in the infinitely-deep case has been split into several steps in  Section \ref{sec2}. For the sake of simplicity, the proof is given for the
 case that $s=0.25$ in (\ref{cgs1}), but the general case is obtained in a  straightforward manner.   This proof also covers the confined problem with flat
 {\it top} and {\it bottom} boundaries.  Observe that the solution gains an extra half-derivative in  space, when integrated in time.  
As we shall explain, this {\it parabolic-regularity}  property is obtained by using the jump condition related to the expression for the amplitude of the vorticity. 
In Section \ref{sec3}, we provide the proof for the case of general domain geometries.

Next, we address the question of  global existence  and decay to equilibrium of classical solutions for small data.   Indeed, if the initial data is periodic, Theorem \ref{localsmall} can be strengthened,  and we obtain
\begin{theorem}[$H^2$ global well-posedness and decay to equilibrium]\label{globalsmall}
Let $h_0\in H^2(\bbT)$ be the periodic, zero-mean initial height function for the infinitely-deep Muskat problem (\ref{HS_Eulerian}a-d) with 
$\mu^\pm,\rho^\pm>0$. Then there exists a small enough constant $\sigma_2=\sigma_2(\mu^\pm,\rho^\pm)$, such that if
 $\|h_0\|_{H^2(\bbT)}\leq \sigma_2$, there exists a unique global-in-time solution
$$
h\in C([0,\infty];H^2(\mathbb{T}))\cap L^2(0,\infty;H^{2.5}(\mathbb{T})).
$$
Moreover, this solution verifies
$$
\max_{0\leq s\leq \infty}\{\|h(s)\|_{H^2(\bbT)}^2\}+\int_0^\infty\|h(s)\|_{H^{2.5}(\bbT)}^2ds\leq C\|h_0\|_{H^2(\bbT)}^2,
$$
together with the decay estimate
$$
\|h(t)\|_{L^2(\bbT)}^2\leq c(h_0)e^{-\alpha t},\text{ and, more generally, } \|h(t)\|_{H^r(\bbT)}^2\leq c(h_0,r)e^{-\left(1-\frac{r}{2}\right)\alpha t} 
$$
for every $0\leq \alpha<2,$ $0\leq r<2$.
\end{theorem}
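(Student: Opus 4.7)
\emph{Overall strategy.} The idea is to convert the local existence result of Theorem~\ref{localsmall} (whose proof adapts unchanged to periodic data) into a global one by establishing a coercive a priori inequality at the $H^2$ level, exploiting \Poincare{} in the zero-mean periodic setting. Once the $H^2$ norm is shown to decay exponentially, a standard continuation argument extends the solution to $[0,\infty)$, and the sharper $L^2$ decay with rate $\alpha<2$ is extracted from the exact energy identity already appearing in Theorem~\ref{localsmall}.

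\emph{The closed $H^2$ estimate.} I would re-examine the nonlinear energy estimates used to prove Theorem~\ref{localsmall} and verify that in the periodic setting they take the form
\begin{equation*}
\frac{d}{dt}\|h(t)\|_{H^2(\bbT)}^2 + c_0 \|h(t)\|_{H^{2.5}(\bbT)}^2 \;\le\; P\!\left(\|h\|_{H^2}\right)\|h\|_{H^{2.5}}^2,
\end{equation*}
where $c_0=c_0(\mu^\pm,\rho^\pm)>0$ reflects the coercivity of the linearized Dirichlet-to-Neumann dissipation at the flat interface and $P$ is a continuous function with $P(0)=0$. Choosing $\sigma_2$ so small that $P(\sigma_2)\le c_0/2$ absorbs the nonlinear term into the dissipation. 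Since the dynamics preserves the spatial mean of $h$ (integrate the interface equation over $\bbT$; equivalently, use $\operatorname{div} u=0$ and incompressibility of each phase), the \Poincare{} inequality gives $\|h\|_{H^{2.5}}^2\ge \|h\|_{H^2}^2$, and consequently
\begin{equation*}
\frac{d}{dt}\|h(t)\|_{H^2}^2 + \tfrac{c_0}{2}\|h(t)\|_{H^2}^2 \;\le\; 0, \qquad \text{so} \qquad \|h(t)\|_{H^2}^2\le \|h_0\|_{H^2}^2\, e^{-c_0 t/2}.
\end{equation*}
In particular, the smallness assumption $\|h(t)\|_{H^2}\le \sigma_2$ is \emph{self-improving}, so a continuation argument (re-applying Theorem~\ref{localsmall} at times $T_0,2T_0,\dots$ with initial data $h(nT_0,\cdot)$) produces the global solution together with the claimed uniform-in-time $H^2$ bound and $L^2(0,\infty;H^{2.5})$ integrability.

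\emph{Sharp $L^2$ and $H^r$ decay.} For the optimal rate $\alpha<2$, I would start from the energy identity provided by Theorem~\ref{localsmall}, differentiated in time:
\begin{equation*}
\frac{d}{dt}\|h(t)\|_{L^2(\bbT)}^2 \;=\; -\mu^+\|u^+(t)\|_{L^2(\Omega^+(t))}^2 - \mu^-\|u^-(t)\|_{L^2(\Omega^-(t))}^2.
\end{equation*}
A harmonic-extension argument, using Darcy's law together with the jump conditions $\jump{p}=h$ and $\jump{u\cdot n}=0$, shows that at the linear level about the flat interface the right-hand side equals $-2\|h\|_{\dot H^{1/2}(\bbT)}^2$ (the factor $2$ arising from the two half-space Dirichlet-to-Neumann contributions), plus a nonlinear correction bounded by $O(\|h\|_{H^2})\|h\|_{\dot H^{1/2}}^2$. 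Because $h$ is zero-mean on $\bbT$ the smallest active frequency is $|k|=1$, and \Poincare{} gives $\|h\|_{\dot H^{1/2}}^2\ge \|h\|_{L^2}^2$; smallness of $\sigma_2$ therefore yields
\begin{equation*}
\frac{d}{dt}\|h(t)\|_{L^2}^2 \;\le\; -\alpha\,\|h(t)\|_{L^2}^2
\end{equation*}
for any prescribed $\alpha<2$, giving $\|h(t)\|_{L^2}^2\le c(h_0)\,e^{-\alpha t}$. The $H^r$ decay for $0\le r<2$ follows at once from the interpolation inequality $\|h\|_{H^r}^2\le \|h\|_{L^2}^{2-r}\|h\|_{H^2}^{r}$ combined with the uniform-in-time $H^2$ bound from the previous paragraph. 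The principal obstacle is the first step: extracting from the proof of Theorem~\ref{localsmall} an energy estimate with a \emph{quantitatively} coercive $H^{2.5}$ dissipation, with a linear constant $c_0$ that strictly dominates the nonlinear contributions for data of size $\sigma_2$; the parabolic smoothing noted right after Theorem~\ref{localsmall} is precisely the structural feature that makes this feasible.
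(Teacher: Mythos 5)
Your proposal follows a genuinely different route than the paper, and there is a gap at the step you yourself flag as the principal obstacle. The paper does not argue via a coercive $H^2$ differential inequality: it linearizes the semi-ALE system around the flat interface, passes to the mild (Duhamel) formulation
\[
h(t)=e^{-\Lambda t}h_0+\int_0^te^{-\Lambda(t-s)}\,\tfrac{1}{2}\big(F_2^++F_2^--\tilde Q^+_{,2}-\tilde Q^-_{,2}\big)\,ds,
\]
and controls the weighted norm \eqref{totalenergy}, $\vertiii{(w,h)}_T^2=\max_t\{|h|_2^2+e^{\alpha t}|h|_0^2+\int_0^t\|w\|_{2,\pm}^2\}$. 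The $L^2$ decay is obtained from the semigroup bound $\|e^{-\Lambda t}\|_{L^2\to L^2}\le e^{-t}$ together with \emph{upper} bounds of the form $|F_2^++F_2^-|_0\lesssim|h|_0\,|h|_{2.5}$ and $\|F\|_{0.5,\pm}\lesssim|h|_0\,|h|_{2.5}$; the $H^r$ decay then comes by interpolating against the uniform $H^2$ bound. Note that the weighted norm deliberately carries no factor $e^{\alpha t}|h|_2^2$, and the claimed rate $(1-\tfrac{r}{2})\alpha$ vanishes at $r=2$: the theorem does not assert exponential decay of $\|h\|_{H^2}$, which is precisely what your coercive inequality would deliver.

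The gap is that the estimate you take as your starting point, $\tfrac{d}{dt}\|h\|_{H^2}^2+c_0\|h\|_{H^{2.5}}^2\le P(\|h\|_{H^2})\|h\|_{H^{2.5}}^2$ with $P(0)=0$ and a quantitative $c_0>0$, does not follow from the energy estimates in the proof of Theorem~\ref{localsmall}. What is actually proved there is the integral inequality \eqref{polinomine}, whose dissipation is the bulk quantity $\int_0^t\|w''\|_{0,\pm}^2$, not an $H^{2.5}$ norm of the trace $h$. Converting this to your differential form requires a quantitative \emph{lower} bound $\|w''\|_{0,\pm}^2\ge c_0|h|_{2.5}^2$. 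The inequality $|h|_{2.5}\lesssim\|w\|_{2,\pm}$ from Section~\ref{sectionH2.5} goes the right direction, but passing from $\|w''\|_{0,\pm}$ to $\|w\|_{2,\pm}$ via Proposition~\ref{Hodge} reintroduces the lower-order term $\|w\|_{0,\pm}$, which is of size $|h|_{0.5}$: the zero-mean \Poincare{} gain $|h|_{0.5}\le|h|_{2.5}$ carries constant one, but the Hodge constant multiplying $\|w\|_{0,\pm}^2$ is not under quantitative control, so one cannot conclude that $c_0$ approaches the linear rate $2$ as the data shrinks — which is what is needed to recover every $\alpha<2$. Your parallel $L^2$ argument contains the same difficulty in disguise: it needs a near-optimal \emph{lower} bound $\mu^+\|u^+\|_{L^2(\Omega^+)}^2+\mu^-\|u^-\|_{L^2(\Omega^-)}^2\ge(2-O(\sigma))|h|_{0.5}^2$ for the full nonlinear flow (not just its linearization), i.e.\ quantitative coercivity of the nonlinear Dirichlet-to-Neumann form. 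The paper never establishes this, and the Duhamel route is chosen precisely because it avoids it: only upper bounds on the nonlinearity and the explicit semigroup decay are required.
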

The proof of this result is given in Section \ref{sec4}. Notice that the decay of the linear problem ($\alpha=2$) is not reached and appears to be critical.

\begin{remark} 
We can compare the global existence result given by our Theorem \ref{globalsmall} with the global existence  results   in \cite{ccgs-10,ccgs-13} 
for the case that $\jump{\mu}=0$.   On the one hand, because of the embedding inequality
$$
  \| u\|_{A(\bbR)} \le C \| u\|_{H^{0.5+s}(\bbR) }  \,,\,s>0
$$
we see that we must impose more severe size constraints our initial data than the results of \cite{ccgs-10,ccgs-13}; on the other hand, our result can
also handle the case that $\mu^+ \neq \mu^-$, and we find the exponential decay rate back to the equilibrium configuration.   
\end{remark}

For the one-phase Muskat problem (the case where $\mu^+=\rho^+=0$), our previous result is improved:

\begin{theorem}[Local well-posedness for the one-phase problem]\label{localonephase}
Fix $\mu^+=\rho^+=0$ $\mu^-,\rho^->0$, $\tilde{b}(x_1)=0$. Let $h_0\in H^{2}(\bbT)$ such that $\min_{x_1}h_0(x_1)>c_b$, be the initial height function for the confined, one-phase Muskat problem (\ref{HS_Eulerian_Onephase}a-e) satisfying the Rayleigh-Taylor stability condition \eqref{RTonephase}.
Then there exists $T(h_0)$ and a unique local-in-time solution
$$
h\in C([0,T(h_0)];H^2(\mathbb{T}))\cap L^2(0,T(h_0);H^{2.5}(\mathbb{T}))
$$
for the confined Muskat problem (\ref{HS_Eulerian_Onephase}a-e). Moreover, this solution verifies
$$
\|h(t)\|_{L^2(\bbT)}^2+\int_0^t\|\sqrt{\mu^-} u(\mathfrak{t})\|_{L^2(\Omega^-(\mathfrak{t}))}^2d\mathfrak{t}=\|h_0\|_{L^2(\bbT)}^2,
$$
and
$$
\max_{0\leq s\leq T(h_0)}\{\|h(s)\|_{H^2(\bbT)}^2\}+\int_0^{T(h_0)}\|h(s)\|_{H^{2.5}(\bbT)}^2ds\leq C_1\|h_0\|_{H^2(\bbT)}^2,
$$
for a fixed constant $C_1$.
\end{theorem}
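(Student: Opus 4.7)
The plan is to adapt the ALE-type methodology of Section \ref{sec2} to the one-phase confined case, while exploiting the simpler structure afforded by the vanishing of the upper phase. First, I would flatten the domain through a diffeomorphism $\Psi(\cdot,t)\colon \Omega_0 \to \Omega(t)$ from the reference strip $\Omega_0 = \mathbb{T} \times (c_b, 0)$, for instance $\Psi(x_1,z) = (x_1,\, z + (1 - z/c_b)\, h(x_1,t))$. This is a bona fide diffeomorphism as long as $\min_{x_1} h(\cdot,t) > c_b$, a condition guaranteed initially by hypothesis and maintained on a short time interval via the embedding $H^2(\mathbb{T}) \hookrightarrow C^0(\mathbb{T})$ and continuity in time. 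Setting $q = p + x_2$ reduces the elliptic system $(\mathrm{\ref{HS_Eulerian_Onephase}}{\rm a,b,d,e})$ to $\Delta q = 0$ in $\Omega(t)$ with Dirichlet datum $q|_{\Gamma(t)} = h$ and homogeneous Neumann datum on the fixed bottom. The kinematic condition then becomes $\partial_t h = -\sqrt{1+(h')^2}\, \partial_n q|_{\Gamma(t)}$, i.e.\ $\partial_t h + \mathcal{G}(h)[h] = 0$, where $\mathcal{G}(h)$ is a Dirichlet--Neumann-type operator whose linearization about $h\equiv 0$ is the order-one, non-negative Fourier multiplier $\sqrt{-\partial_1^2}\,\tanh\!\left(|c_b|\sqrt{-\partial_1^2}\right)$. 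This is the source of the half-derivative parabolic gain.

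The existence proof proceeds via a regularization scheme: mollify the initial data to $h_0^{\varepsilon}\in C^\infty$, solve the smooth problem on a time interval whose length is controlled uniformly in $\varepsilon$ via a Picard-type iteration in the class $C([0,T];H^2(\mathbb{T})) \cap L^2(0,T;H^{2.5}(\mathbb{T}))$, and pass to the limit $\varepsilon \to 0$. At each iteration step, given $h^{(n)} \in H^2(\mathbb{T})$, the pressure $p^{(n)}$ is recovered via elliptic theory for the mixed Dirichlet--Neumann problem on the $h^{(n)}$-dependent strip, yielding $p^{(n)} \in H^{5/2}(\Omega^{(n)}(t))$, and $h^{(n+1)}$ is advanced by the resulting linear, non-local, first-order parabolic equation. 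The key a priori bound is obtained by differentiating the evolution twice in $x_1$, testing with $\partial_1^2 h$, and returning the resulting boundary integral to the bulk domain $\Omega(t)$ through integration by parts. The leading-order contribution produces an inequality of the form
\begin{equation*}
\tfrac{1}{2}\tfrac{d}{dt}\|h\|_{H^2(\mathbb{T})}^2 + c\,\|h\|_{H^{2.5}(\mathbb{T})}^2 \le P\!\left(\|h\|_{H^2(\mathbb{T})}\right),
\end{equation*}
where the coercivity constant $c>0$ stems from the Rayleigh--Taylor positivity \eqref{RTonephase} combined with the non-negativity of the linearized Dirichlet--Neumann operator, while $P$ is a polynomial absorbing the commutator and product terms, controlled by standard $H^s$ product and composition estimates. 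Gronwall's inequality then yields a uniform time of existence and bounds in the desired class. The $L^2$ energy balance is derived independently by testing Darcy's law with $u$ and using $p = 0$ on $\Gamma(t)$, $u\cdot n=0$ on the bottom, together with the kinematic condition rewriting $\int_{\Omega(t)} u\cdot e_2$ as $\tfrac{1}{2}\tfrac{d}{dt}\|h\|_{L^2(\mathbb{T})}^2$. Uniqueness follows from analogous estimates on the difference of two solutions.

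The main obstacles I anticipate are threefold. First, working at the critical Sobolev regularity $H^2(\mathbb{T})$, at which $h' \in H^1(\mathbb{T}) \hookrightarrow L^\infty(\mathbb{T})$ only barely, forces the elliptic and trace estimates in the strip with $H^2$-graph top boundary to be carried out very carefully; in particular, the $H^{5/2}$ regularity for the mixed Dirichlet--Neumann problem with such limited boundary regularity is needed to close the energy loop. Second, the persistence of the Rayleigh--Taylor condition \eqref{RTonephase} over $[0,T(h_0)]$ must be ensured: unlike the infinitely-deep case, where it is automatic by Hopf's lemma, the confined geometry with impervious bottom requires retaining it via the continuity of $\nabla p \in C^0([0,T];H^{3/2}(\Omega(t)))$ and the strict initial positivity, possibly at the cost of shrinking $T(h_0)$. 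Third, the non-degeneracy constraint $\min_{x_1} h(\cdot,t) > c_b$ must be preserved throughout $[0,T(h_0)]$, which follows from the continuous embedding $C^0([0,T];H^2(\mathbb{T})) \hookrightarrow C^0([0,T]\times\mathbb{T})$. I expect the delicate point to be the low-regularity $H^{5/2}$ elliptic estimate coupled to the parabolic energy at the endpoint Sobolev exponent, which is precisely what the half-derivative parabolic gain must absorb.
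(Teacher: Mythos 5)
Your proposal captures the high-level philosophy — flatten the domain, locate the parabolic gain in the positivity of the linearized Dirichlet-to-Neumann operator combined with the Rayleigh--Taylor sign condition, and close via energy estimates — but it diverges from the paper in ways that matter precisely at the critical regularity $H^2$, and several of the steps you mark as "delicate" are in fact genuine obstructions for the route you propose.

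The most important gap is regularity of the flattening map. You propose the explicit shear $\Psi(x_1,z) = (x_1, z + (1-z/c_b)h(x_1,t))$; since the second component is (up to the smooth prefactor) the function $h$ of $x_1$ alone, $\Psi(\cdot,t)$ belongs only to $H^2(\Omega)$ when $h(\cdot,t)\in H^2(\bbT)$. The energy argument, in whichever guise, needs the map and its inverse Jacobian $A=(\nabla\Psi)^{-1}$ in $H^{1.5}$ (pointwise in time), i.e.\ the map in $H^{2.5}$; a half derivative is missing. The paper circumvents this by constructing $\psi(0)$ as a \emph{composite} $\phi_2\circ\phi_1$ in \eqref{phi1}--\eqref{phi2}, where $\phi_1$ is a $C^\infty$ shear built from the fixed strong mollification $\jdel h_0$, and $\phi_2$ is a harmonic correction solved on the \emph{smooth} domain $\Omega^\delta(0)$; elliptic theory on a smooth domain with $H^2$ Dirichlet data produces an $H^{2.5}$ map, and $\psi(t)$ is then propagated by $\Delta\psi(t)=\Delta\psi(0)$ with time-dependent Dirichlet data \eqref{psita}, preserving $H^{2.5}$ via \eqref{boundpsi}. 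Your simple shear cannot reproduce this gain.

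A second gap concerns the pressure estimate. You assert $p^{(n)}\in H^{5/2}(\Omega^{(n)}(t))$ by "elliptic theory for the mixed Dirichlet--Neumann problem on the $h^{(n)}$-dependent strip," but when the graph boundary is only $H^2$, standard elliptic theory does not deliver $H^{5/2}$ regularity. Once transplanted to the reference strip, the issue resurfaces as an elliptic problem with coefficient matrix $A A^T$ only in $H^{1.5}$; the paper handles this via Lemma \ref{lemaa6} (a tailored fractional-Sobolev elliptic estimate based on a Kenig--Ponce--Vega commutator argument in horizontal slices), together with the bounds \eqref{Q1.25onephase}, \eqref{Q1.5onephase}, and the Hodge-type estimate of Proposition \ref{Hodge2}. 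Your outline does not contain a substitute for this.

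Third, your single-parameter mollification $h_0^\varepsilon$ plus Picard iteration in $C_tH^2\cap L^2_tH^{2.5}$ misses the paper's two-parameter $(\epsilon,\kappa)$ regularization. The $\epsilon$-mollification of the initial data is required independently of the $\kappa$-mollification of the evolving interface because one boundary commutator term in the energy estimate (the term $\mathfrak{Q}_1$ in \eqref{Qone}, controlled via Proposition \ref{commutator}) involves $\|q_{,12}(0)\|_{L^\infty}$, which demands $|\jkap h_0|_{2.5+s}<\infty$ and is therefore genuinely $\epsilon$-dependent. Passing $\kappa\to0$ first kills this term, after which $\epsilon\to0$ can be taken. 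A one-parameter scheme would leave this term uncontrolled uniformly in the regularization.

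Finally, the persistence of Rayleigh--Taylor is handled in the paper not by continuity of $\nabla p$ in $H^{3/2}$ but by proving the quantitative small-time bound $\|\bar q\|_{2.25,-}\le \sqrt[8]{t}\,\mathcal{P}(E(t))$ in \eqref{qbar} for the pressure increment, from which the pointwise lower bound on $-q_{,2}|_\Gamma$ propagates. That extra $1/8$ power of $t$ is essential, not cosmetic: it is what feeds the $\sqrt[12]{t}$ in the final polynomial inequality $E(t)\le\mathcal{M}_0+\sqrt[12]{t}\mathcal{Q}(E(t))$.

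In summary, you have the correct intuition about where the parabolicity comes from, but your Dirichlet-to-Neumann contour formulation with a simple shear and a single mollification would, as written, require $H^3$ data (the classical threshold) rather than $H^2$; the ALE construction, the composite harmonic-extension map, the fractional elliptic estimate, and the two-parameter regularization are precisely what push the threshold down to $H^2$.
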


\begin{remark}
Note that in Theorem \ref{localonephase},  the initial data can be  arbitrarily large; in particular, we place no smallness condition on the data.
\end{remark} 
The proof of Theorem \ref{localonephase} is given in Section \ref{sec5}.
Finally, as a consequence of our half-derivative gain in space, $L^2$-in-time, we have the following
\begin{theorem}[Instantaneous parabolic smoothing]\label{Cinftyonephase}
Given $\Gamma$ and a solution $h$ to the Muskat problem satisfying
$$
h\in C([0,T(h_0)];H^2(\Gamma))\cap L^2(0,T(h_0);H^{2.5}(\Gamma))
$$
and either
\begin{enumerate}
\item $\Gamma=\bbR$ and $h$ is the solution to for the infinitely-deep Muskat problem (\ref{HS_Eulerian}a-e) obtained under the hypotheses of Theorem \ref{localsmall},
\item $\Gamma=\bbR$ and $h$ is the solution to for the confined Muskat problem (\ref{HS_Eulerian}a-d,e',f) obtained under the hypotheses of Theorem \ref{localsmall}, 
\item $\Gamma=\bbT$ and $h$ is the solution to for the one-phase Muskat problem (\ref{HS_Eulerian_Onephase}a-e) obtained under the hypotheses of Theorem \ref{localonephase},
\end{enumerate}
then, in fact, 
$$
h(\cdot ,t)\in C^\infty(\Gamma)\text{ if }\delta\leq t\leq T(h_0),\,\,\forall\,\delta>0.
$$
\end{theorem}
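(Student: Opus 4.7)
The proof is by a parabolic-smoothing bootstrap exploiting the half-derivative gain in space already present in the energy identity of Theorems \ref{localsmall} and \ref{localonephase}. The linearization of the Muskat equation around a flat interface is $\partial_t h + c\,\Lambda h = 0$ with $\Lambda = \sqrt{-\Delta}$, so one expects an instantaneous smoothing effect analogous to that of the heat equation, iterated on the scale of half-derivatives.

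The plan is to prove by induction on $k \ge 0$ that, for any $\delta>0$,
\[
h \in L^\infty\bigl([\delta,T(h_0)];H^{2+k/2}(\Gamma)\bigr)\cap L^2\bigl(\delta,T(h_0);H^{2.5+k/2}(\Gamma)\bigr).
\]
The base case $k=0$ is the conclusion of the relevant local existence theorem. For the inductive step, I would derive a higher-order version of the basic energy identity: applying the paper's Lagrangian formulation (with flattening variables $\bar\psi,\bar J,\bar A$) and differentiating the equation by $\Lambda^{k/2}$, one expects an inequality of the schematic form
\[
\frac{1}{2}\frac{d}{dt}\|h\|_{H^{2+k/2}}^2 + c\,\|h\|_{H^{2.5+k/2}}^2 \;\le\; C_k\bigl(\|h\|_{H^2}\bigr)\,\|h\|_{H^{2.5+k/2}}\|h\|_{H^{2+(k-1)/2}},
\]
the dissipation on the left surviving because the jump-condition / Rayleigh--Taylor mechanism that produced the $\Lambda$-dissipation at $H^2$ is preserved under commutation with $\Lambda^{k/2}$, while the commutator errors are controlled by Kato--Ponce and Coifman--Meyer estimates applied to the singular-integral (or Dirichlet-to-Neumann) operator that determines the velocity trace on $\Gamma(t)$.

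Multiplying this inequality by a time weight $t^{\alpha_k}$ with $\alpha_k>\alpha_{k-1}+1$ (for instance $\alpha_k=2k+1$) and integrating from $0$ to $t$, the term arising from $\tfrac{d}{dt}t^{\alpha_k}$ is estimated by the interpolation $\|h\|_{H^{2+k/2}}^2 \le \|h\|_{H^{2+(k-1)/2}}\|h\|_{H^{2.5+k/2}}$ together with Young's inequality:
\[
\alpha_k\, s^{\alpha_k-1}\|h\|_{H^{2+k/2}}^2 \;\le\; \tfrac{c}{2}\, s^{\alpha_k}\|h\|_{H^{2.5+k/2}}^2 \;+\; C\, s^{\alpha_k-2}\|h\|_{H^{2+(k-1)/2}}^2,
\]
the first piece being absorbed in the dissipation, and the second piece being integrable on $[0,T(h_0)]$ by the inductive hypothesis thanks to the choice $\alpha_k>\alpha_{k-1}+1$. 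This closes the induction and yields $t^{\alpha_k/2}h\in L^\infty(H^{2+k/2})\cap L^2(H^{2.5+k/2})$. In particular, for any $\delta>0$ and any $m\in\bbN$ one obtains $h\in L^\infty([\delta,T(h_0)];H^m(\Gamma))$, and Sobolev embedding gives $h(\cdot,t)\in C^\infty(\Gamma)$ for $t\ge\delta$. The argument runs identically in all three geometries, since they differ only through the elliptic problem used to extend the velocity, and elliptic regularity at every Sobolev scale is classical in each configuration.

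The main obstacle is establishing the higher-order energy inequality uniformly in $k$. At the $H^2$ level, the papers exploit a delicate algebraic structure of the vorticity amplitude combined with the Rayleigh--Taylor sign to extract the $\Lambda$-dissipation; lifting this to the $\Lambda^{k/2}$-commuted system requires careful bookkeeping of commutators through the nonlinear coefficients $\bar A(\psi),\bar J(\psi)$ and verifying that the leading commutator error sits a half-derivative below the dissipation so that it is absorbable. A secondary but routine point is propagating elliptic regularity up to the fixed Neumann floor of the one-phase domain at every Sobolev level, which is automatic since $\tilde b\equiv 0$ in Theorem \ref{localonephase}.
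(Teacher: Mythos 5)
Your proposal is correct in spirit but takes a genuinely different route from the paper. The paper runs a \emph{restart} argument: it re-derives, by the same energy machinery of Sections \ref{sec2}--\ref{sec5} but with three tangential derivatives instead of two, an $H^3$-$L^2 H^{3.5}$ local a priori bound, interpolates this with the $H^2$-$L^2 H^{2.5}$ bound of Theorems \ref{localsmall}/\ref{localonephase} to obtain an $H^{2.5}$-$L^2 H^3$ estimate, then uses the fact that $h\in L^2(0,T^*;H^{2.5}(\Gamma))$ implies $h(\delta)\in H^{2.5}(\Gamma)$ for arbitrarily small $\delta>0$, restarts the problem at time $\delta$ with the improved data $h_{\delta 0}=h(\delta)$ (still satisfying the smallness/Rayleigh--Taylor hypotheses), and invokes the uniqueness established in Theorems \ref{localsmall}/\ref{localonephase} to identify the restarted solution with the original one. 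Iterating this restart gains half a derivative at each step and yields $h(\cdot,t)\in C^\infty(\Gamma)$ for $t\geq\delta$. You instead derive a $\Lambda^{k/2}$-commuted energy inequality directly for the original solution, multiply by a degenerate weight $t^{\alpha_k}$ with $\alpha_k>\alpha_{k-1}+1$, and absorb the weight-derivative and commutator errors into the parabolic dissipation via interpolation and Young. Both schemes rely on the same hard analytic input, namely that the jump-condition/Rayleigh--Taylor mechanism producing the half-derivative dissipation at the $H^2$ level survives differentiation to every higher Sobolev level; the paper simply asserts this ("repeating our energy estimates using three tangential derivatives rather than two") while you correctly flag it as the main obstacle. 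What the paper's route buys is a cleaner decoupling: one only needs the local well-posedness and uniqueness theory at each discrete Sobolev level, together with the Lebesgue-point choice of $\delta$, and never a single estimate uniform in $k$; there is no degenerate weight to manage. What your route buys is a quantitative rate $t^{\alpha_k/2}h\in L^\infty(H^{2+k/2})$, i.e.\ an explicit modulus of blow-up as $t\to 0^+$, which is sharper information than the paper's bare conclusion $h\in L^\infty([\delta,T];H^m)$ for every $\delta>0$. Either route proves the stated theorem.
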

The proof of this result is given in Section \ref{sec6}.

\subsection{Notation}
\subsubsection{Matrix notation} Let $A$ be a matrix,  and $b$ be a column vector. Then, we write $A^i_j$ for the component of $A$, located on row $i$ 
and column $j$;  consequently, using the Einstein summation convention, we write 
$$
(Ab)^k=A^k_ib^i\text{ and }(A^Tb)^k=A^i_k b^i.
$$ 
\subsubsection{Sobolev norms}
For $s\ge 0$, we let
$$
\|u\|_{s,+} = \|u^+\|_{H^s(\Omega^+)}\,,\ \|u\|_{s,-} = \|u^-\|_{H^s(\Omega^-)}\,,\ \|u\|_{s,\pm} = \|u^+\|_{s,+} + \|u^-\|_{s,-}
$$
and
$$
|h|_s = \|h\|_{H^s(\Gamma)}\,.
$$
Let $\bbR^2_+$ and $\bbR^2_-$ denote the upper and lower half plane, respectively. Then, abusing notation, we write
$$
\|v\|_{s,+} = \|v^+\|_{H^s(\bbR^2_+)}\,,\ \|v\|_{s,-} = \|v^-\|_{H^s(\bbR^2_-)}\,,\ \|v\|_{s,\pm} = \|v^+\|_{s,+} + \|v^-\|_{s,-}
$$
and
$$
|h|_s = \|h\|_{H^s(\bbR)}\,.
$$
\subsubsection{The derivatives}
We let $f'$ denote the (tangential) derivative of $f$ with respect to $x_1$; that is,
$$
f' = \frac{\p f}{\p x_1}.
$$
For $k=1,2$, we write 
$$
f_{,k}=\frac{\partial f}{\partial x_k}.
$$
For  a diffeomorphism $\psi$, we let
$\text{curl}_\psi u=\text{curl} u\circ\psi$ and $\text{div}_\psi u=\text{div} u\circ\psi$.

\subsubsection{Mollifiers}
We consider $\mathcal{J}$ a symmetric, positive mollifier with total integral equal to 1. For $\kappa>0$, we define
\begin{equation*}
\mathcal{J}_\kappa(x_1)=\frac{1}{\kappa}\mathcal{J}\left(\frac{x_1}{\kappa}\right)
\end{equation*}
and we denote
$$
f^\kappa=\jeps f=\mathcal{J}_\kappa*f \ \text{ and } \ f^{\kappa\kappa}=\jeps\jeps f\,.
$$

\subsubsection{Dependence on space and time}
For a function $f(x,t)$, we shall often write $f(t)$ to denote $f(\cdot ,t)$.  We associate to the pair of functions $u^\pm:\Omega^\pm(t) \to \mathbb{R}  $, 
the function $u: \mathbb{R}^2 \to \mathbb{R}  $ as follows:
$$
u=u^+\textbf{1}_{\Omega^+(t)}+u^-\textbf{1}_{\Omega^-(t)}.
$$
When we write $\int_ {\Omega^+(t)} u ( \cdot ,t) dx$, this is understood to mean $\int_ {\Omega^+(t)} u^+ ( \cdot ,t) dx$.

\section{The ALE and semi-ALE formulations of the Muskat problem}
\subsection{The ALE and semi-ALE formulation}\label{localsmall1}
\subsubsection{The ALE formulation}
We let $\delta \psi^+$ denote the harmonic extension of $h$ to the upper half plane:
\begin{subequations}\label{deltapsi}
\begin{alignat}{2}
\Delta \delta \psi^{+} &= 0 &&\text{in}\quad \bbR^2_+ \,,\\
\delta \psi^{+} &= h \qquad&&\text{on}\quad \{x_2 = 0\}\,.
\end{alignat}
\end{subequations}
We define $\delta \psi^{-}(x_1,x_2)=\delta \psi^{+}(x_1,-x_2)$. We write $e$ for the identity map given by $e(x) = x$ and define $\psi^\pm=e + \delta\psi^\pm e_2.$ Then, $\psi^\pm( \cdot , t) :\bbR^2_\pm\mapsto \Omega^\pm(t)$ is a solution to
\begin{subequations}\label{psi_eq}
\begin{alignat}{2}
\Delta \psi^\pm &= 0 &&\text{in}\quad \bbR^2_\pm\,,\\
\psi^\pm &= e + h e_2 \qquad&&\text{on}\quad \{x_2 = 0\}\,,
\end{alignat}
\end{subequations}
We note that (\ref{psi_eq}b) is the same as
$$
\psi(x_1,0,t) = \big(x_1, h(x_1,t)\big)\,. \eqno{\rm(\ref{psi_eq}b')}
$$

Setting $J^\pm= \det(\nabla\psi^\pm)$, we see that
$$
A^\pm = (\nabla \psi^\pm)^{-1} = (J^\pm)^{-1} \left[\begin{array}{cc}
(\psi^\pm)^2,_2 & - (\psi^\pm)^1,_2 \\
-(\psi^\pm)^2,_1 & (\psi^\pm)^1,_1
\end{array}
\right]=\frac{1}{1+\delta\psi^\pm_{,2}} \left[\begin{array}{cc}
1+\delta\psi^\pm_{,2} & 0 \\
-\delta\psi^\pm_{,1} & 1
\end{array}
\right]\,.
$$
For a fixed $s>0$, using classical elliptic theory, we have $\|\nabla\delta\psi^\pm\|_{1+s,\pm}\leq C|h|_{1.5+s}$, and
\begin{equation}\label{diffeo}
J^\pm=1+\delta\psi^\pm_{,2}>1-\|\delta\psi^\pm_{,2}\|_{L^\infty(\bbR^2)}>1-C\|\nabla\delta\psi^\pm\|_{1+s,\pm}>1-C|h|_{1.5+s}.
\end{equation}
Consequently, if $|h( \cdot , t)|_{1.5+s}$ is sufficiently small, then $\psi(t)$ is a diffeomorphism.   For example, 
$|h( \cdot , t)|_{1.5+s}$ is small whenever
 the initial data $h_0 \in H^{1.5+s}(\bbR)$ and  $t$ are sufficiently smal.
 
Letting 
$$v^\pm = u^\pm \circ \psi \ \text{ and } \ q^\pm = p^\pm\circ\psi\,,$$
the chain-rule shows that  (\ref{HS_Eulerian}) can be written on the fixed domains as
\begin{subequations}\label{HS_ALE}
\begin{alignat}{2}
\mu^\pm v^\pm+ (A^\pm)^T  \nabla q^\pm &= - \rho^\pm \delta^i_2 \qquad&&\text{in}\quad \bbR^2_\pm\,,\\
(A^\pm)^j_i (v^\pm)^i,_j &= 0 &&\text{in}\quad \bbR^2_\pm\,,
\end{alignat}
\end{subequations}
where $\delta^j_i$ is the Kronecker delta. 

\subsubsection{The evolution equation for $h$}
We derive the evolution equation for $h$ to complete the system (\ref{HS_ALE}). 
We first note that
$$
J^\pm (A^\pm)^\rT e_2 = (-(\psi^\pm)^2,_1, (\psi^\pm)^1,_1) = (\psi^{\pm\prime})^\perp\,,
$$
where $f^\perp = (-f_2,f_1)$. 
Since $\psi^{\pm\prime}( \cdot ,t)$ is tangent to $\Gamma(t)$, we must have $\psi^{\pm\prime}( \cdot ,t)^\perp$ is a normal vector field to $\Gamma(t)$; moreover, 
by (\ref{psi_eq}b) we must have
\begin{equation}\label{JAtN_id}
J A^\rT e_2 = (-h',1)\quad\text{on}\quad\{x_2 = 0\}\,.
\end{equation}
The identity above also suggests that
\begin{equation}\label{defn:n}
n\circ \psi = \frac{(-h',1)}{\sqrt{1+h^{\prime 2}}} \quad\text{on}\quad\{x_2 = 0\}\,.
\end{equation}
On the other hand, differentiating (\ref{psi_eq}b') in $t$, we find that
\begin{equation}
\psi_t \cdot (n\circ \psi) = h_t \big(e_2 \cdot (n\circ \psi)\big)\qquad\text{on}\quad \{x_2 = 0\}\,. \label{h_eq_temp}
\end{equation}
By (\ref{HS_Eulerian}c) (or the interface moves along with the fluid velocity), $\psi_t \cdot (n\circ \psi) = (u\cdot n)\circ \psi$; thus (\ref{JAtN_id}, (\ref{defn:n}) and (\ref{h_eq_temp}) imply that
$$
v \cdot (n\circ \psi) = \frac{h_t}{\sqrt{1+h^{\prime 2}}}
$$
or equivalently,
$$
h_t = v \cdot (-h',1) = v \cdot (J A^\rT e_2) = (JAv) \cdot e_2\,\text{ on } \{x_2=0\}. \eqno{\rm(\ref{HS_ALE}c)}
$$
The coupled equations (\ref{psi_eq}a,b) and (\ref{HS_ALE}a,b,c), together with the initial condition
$$
h = h_0 \qquad\text{on}\quad\{t=0\} \eqno{\rm(\ref{HS_ALE}d)}
$$
is the ALE formulation of (\ref{HS_Eulerian}).

\subsection{The semi-ALE formulation}\label{sec2.2} For the purposes of reinstating a linear divergence-free constraint on the velocity field,
we let
\begin{equation}\label{w-vel}
w^\pm = J^\pm A^\pm v^\pm 
\end{equation} 
or componentwise, $w^\pm\cdot e_k = J^\pm (A^\pm)^k_i (v^\pm)\cdot e_i$.  Then, by the Piola identity, $(J^\pm(A^\pm)^i_{j})_{,i}=0,$  and 
(\ref{HS_ALE}b) implies that
$$
\div w^\pm = 0 \qquad\text{in}\quad\bbR^2_\pm\,.
$$
Therefore, $(w^\pm,q^\pm,h)$ satisfies
\begin{subequations}\label{HS_semiALE}
\begin{alignat}{2}
\mu w^\pm\cdot e_k + J^\pm (A^\pm)^k_i (A^\pm)^j_i q^\pm,_j &= - \rho^\pm J^\pm (A^\pm)^k_2 \qquad&&\text{in}\quad \bbR^2_\pm\,,\\
\div\, w^\pm &= 0 &&\text{in}\quad \bbR^2_\pm\,,\\
\jump{w\cdot e_2} = \jump{q} &= 0 &&\text{on}\quad \{x_2 = 0\}\,,\\
\Delta \psi^\pm &= 0 &&\text{in}\quad \bbR^2_\pm\,,\\
\psi &= e + h e_2 \qquad&&\text{on}\quad \{x_2 = 0\}\,,\\
h_t &= w \cdot e_2 \qquad &&\text{on}\quad \{x_2 = 0\}\,,\\
h &= h_0 &&\text{on}\quad \bbR \times \{t=0\}\,.
\end{alignat}
\end{subequations}
Equation (\ref{HS_semiALE}) is the semi-ALE formulation of (\ref{HS_Eulerian}). Since $A\, \nabla\psi=\text{Id}$ we have
$$
Ae_2=A\left(A^T\left(\nabla\psi^T\cdot e_2\right)\right),
$$

and (\ref{HS_semiALE}a) can also be written as
$$
\mu^\pm w^\pm\cdot e_k + J^\pm (A^\pm)^k_i (A^\pm)^j_i (q^\pm + \rho^\pm \psi^\pm\cdot e_2),_j = 0 \qquad\text{in}\quad \bbR^2_\pm\,. \eqno{\rm(\ref{HS_semiALE}a')}
$$
Let $Q^\pm = q^\pm + \rho^\pm x_2$. Since $A^\pm= [ \nabla \psi^\pm] ^{-1} $, it follows that
$$
\mu^\pm \frac{(\nabla\psi^\pm)^T\nabla\psi^\pm w^\pm}{J^\pm}+\nabla(Q^\pm+\rho^\pm\delta\psi^\pm)=0 \,.
$$

Using $Q$ rather than $q$, we write the system (\ref{HS_semiALE}) as
\begin{subequations}\label{HS_semiALE1}
\begin{alignat}{2}
\mu^\pm w^\pm + \nabla (Q^\pm + \rho^\pm \delta\psi^\pm) &= \left(\text{Id}-\frac{(\nabla\psi^\pm)^T\nabla\psi^\pm }{J^\pm}\right)\mu^\pm w^\pm  \qquad&&\text{in}\quad \bbR^2_\pm\,,\\
\div\, w^\pm &= 0 &&\text{in}\quad \bbR^2_\pm\,,\\
\jump{w\cdot e_2} = \jump{Q} &= 0 &&\text{on}\quad \{x_2 = 0\}\,,\\
\Delta \delta \psi^\pm &= 0 &&\text{in}\quad \bbR^2_\pm\,,\\
\delta\psi^\pm &= h \qquad&&\text{on}\quad \{x_2 = 0\}\,,\\
h_t &= w \cdot e_2 \qquad &&\text{on}\quad \{x_2 = 0\}\,,\\
h &= h_0 &&\text{on}\quad \bbR \times \{t=0\}\,.
\end{alignat}
\end{subequations}
The advantage of the formulation \eqref{HS_semiALE1} is that the nonlinear terms are on the right-hand side, keeping the left-hand side linear. Indeed, using 
$$
\nabla\psi^\pm=\nabla(x+\delta\psi^\pm e_2)=\text{Id}+\nabla\delta\psi^\pm e_2\,,
$$
we have that
\begin{equation}\label{eq1}
\left(\text{Id}-\frac{(\nabla\psi)^T\nabla\psi^\pm }{J^\pm}\right)\mu^\pm w^\pm=\left(\begin{array}{cc}\delta\psi^\pm_{,2}-(\delta\psi^\pm_{,1})^2 & -\delta\psi^\pm_{,1}(1+\delta\psi^\pm_{,2})\\
-\delta\psi^\pm_{,1}(1+\delta\psi^\pm_{,2}) & -\delta\psi^\pm_{,2}(1+\delta\psi^\pm_{,2})\end{array}\right)\frac{\mu^\pm w^\pm}{J^\pm}
\end{equation}

\section{The approximate $ \kappa $-problem}
\subsection{An approximation of the semi-ALE formulation: the $\kappa $-problem}
Letting $A^\pm_ \kappa = ( \nabla \psi^\pm_ \kappa ) ^{-1} $, 
we define the following approximation of (\ref{HS_semiALE1}) which we term the $ \kappa $-problem:
\begin{subequations}\label{HS_semiALE_reg}
\begin{alignat}{2}
\mu^\pm w^\pm+J^\pm_\kappa A^\pm_\kappa (A^\pm_\kappa)^T\nabla(Q^\pm+\rho^\pm\delta\psi^\pm_\kappa)&=0 && \text{in}\quad \bbR^2_\pm\times [0,T_\kappa ]\,,\\
\jump{w\cdot e_2}=\jump{Q}&= 0 && \text{on}\quad \Gamma\times [0,T_\kappa ]\,,\\
\div\, w^\pm &= 0 &&\text{in}\quad \bbR^2_\pm\times [0,T_\kappa ]\,,\\
\Delta \delta\psi^{+}_\kappa &= 0 &&\text{in}\quad \bbR^2_+\times [0,T_\kappa ]\,,\\
\delta\psi^{+}_\kappa &= \jeps\jeps h_\kappa  \qquad&&\text{on}\quad \Gamma\times [0,T_\kappa ]\,,\\
\delta\psi^{-}_\kappa(x_1,x_2) &= \delta\psi^{+}_\kappa(x_1,-x_2) \qquad&&\text{on}\quad \bbR^2_{-}\times [0,T_\kappa ]\,,\\
\psi^{\pm}_\kappa(x_1,x_2) &= (x_1,x_2+\delta\psi^{\pm}_\kappa(x_1,x_2)) \qquad&&\text{in}\quad \bbR^2_{-}\times [0,T_\kappa ]\,,\\
h_{\kappa t} &= w \cdot e_2 \qquad &&\text{on}\quad \Gamma\times(0,T_\kappa ]\,,\\
h_\kappa &= \jeps h_0 &&\text{on}\quad \bbR \times \{t=0\}\,.
\end{alignat}
\end{subequations}
This approximation relies on the following two operations:
\begin{enumerate}
\item the initial data $h_0$ is regularized in (\ref{HS_semiALE_reg}i), and 
\item in order to have smooth ALE maps $\psi^\pm$ via elliptic extension, we (symmetrically) mollify the height function on $\Gamma$ in 
(\ref{HS_semiALE_reg}e), thus producing a smooth evolving interface.
\end{enumerate}  
Note that $w$ and $Q$ depend implicitly on $\kappa$.

\subsection{The ALE formulation of the $\kappa$-problem}  The $ \kappa $-approximation becomes very clear when we return to the original
ALE formulation given in (\ref{HS_ALE}).  Indeed, we use $A_ \kappa ^\pm $ in place of $A^\pm$ and $\psi_ \kappa ^\pm$ in place of $\psi^\pm$ and
write (\ref{HS_semiALE_reg}a-b) equivalently as 
\begin{subequations}\label{HS_ALEk}
\begin{alignat}{2}
\mu^\pm \mathcal{V} ^\pm+ (A_ \kappa ^\pm)^T  \nabla \mathcal{Q} ^\pm &= - \rho^\pm \delta^i_2 \qquad&&\text{in}\quad \bbR^2_\pm\times [0,T_\kappa ]\,,\\
(A_\kappa ^\pm)^j_i (\mathcal{V} ^\pm)^i,_j &= 0 &&\text{in}\quad \bbR^2_\pm\times [0,T_\kappa ]\,,
\end{alignat}
\end{subequations}
where 
$$
\frac{\nabla\psi_\kappa^\pm}{J^\pm_\kappa} w ^\pm = \mathcal{V} ^\pm \text{ and } \mathcal{Q} ^\pm=Q^\pm-\rho^\pm x_2
$$

\subsection{The Eulerian formulation of the $\kappa$-problem}
Pulling back (\ref{HS_semiALE_reg}) using the diffeomorphisms $(\psi_ \kappa ^\pm)^{-1}$ defined in (\ref{HS_semiALE_reg}d-g), we obtain the Eulerian form of the
$ \kappa $-problem:
\begin{subequations}\label{HS_Eulerian_kappa}
\begin{alignat}{2}
\mu^\pm \mathcal{U} ^\pm + \nabla \mathcal{P} ^\pm &= - \rho^\pm e_2 \qquad&&\text{in}\quad\Omega_\kappa^\pm(t)\times [0,T_\kappa ]\,,\\
\operatorname{div}  \mathcal{U}  &= 0 &&\text{in}\quad\Omega_\kappa^\pm(t)\times [0,T_\kappa ]\,,\\
\V(\Gamma_\kappa(t)) &= \mathcal{U} ^\pm \cdot n_\kappa \qquad&&\text{on}\quad\Gamma_\kappa(t)\times [0,T_\kappa ]\,,\\
\overline{\Omega_\kappa^+(t)}\cup\Omega_\kappa^-(t) &= \bbR^2 &&\text{for every }\quad t\in [0,T_\kappa ]\,,
\end{alignat}
\end{subequations}
where
\begin{align*} 
\mathcal{U}^\pm & = \mathcal{V}^\pm  \circ (\psi_ \kappa ^\pm)^{-1} \,,\\
\mathcal{P}^\pm & = \mathcal{Q}^\pm  \circ (\psi_ \kappa ^\pm)^{-1}  \,,
\end{align*} 
and
\begin{align*} 
\Gamma _ \kappa (t)& =\{(x_1,h^{\kappa\kappa}(x_1,t)),\,x_1\in\bbR\} \,, \\
n_\kappa (x_1,t) & =(-h^{\kappa\kappa\prime}(x_1,t),1) \,, \\
\Omega^+_ \kappa (t)& =\{(x_1,x_2),x_2>h^{\kappa\kappa}(x_1,t)),\,x_1\in\bbR\} \,, \\
\Omega^-_ \kappa (t)&=\{(x_1,x_2),x_2<h^{\kappa\kappa}(x_1,t)),\,x_1\in\bbR\} \,.
\end{align*} 
 we obtain a solution to (\ref{HS_semiALE_reg}).   
 
 \subsection{An alternative semi-ALE formulation of the $\kappa$-problem}  In order to construct solutions to the $\kappa $-problem for initial height
 functions in $H^2(\Gamma)$ of arbitrary size, we use a different family of
 diffeomorphisms which have the property that the
 Jacobian determinant  is equal to one.   For this purpose, we introduce the diffeomorphisms
$$
{\Psi_ \kappa }^{\pm} = (x_1, x_2 +  {h}^{\kappa\kappa}) \,.\\
$$
Because of the mollifiers present in the definition of $ {h}^{\kappa\kappa}$, we see that the maps $\Psi_ \kappa^\pm ( \cdot ,t): \Gamma \to
 \Omega^\pm_\kappa (t)$ are $C^ \infty $ diffeomorphisms, and that $ \det \nabla \Psi_\kappa ^\pm =1$.

Letting 
\begin{align*} 
\mathscr{V}^\pm& =  \mathcal{U}  \circ \Psi_ \kappa ^\pm \,,\\
\mathscr{Q}^\pm& =  \mathcal{P}  \circ \Psi_ \kappa ^\pm + \rho^\pm x_2 \,,
\end{align*} 
and defining
\begin{align*} 
 \mathcal{A}_ \kappa ^\pm& = [ \nabla \Psi_\kappa ^\pm] ^{-1} \,, \\
\mathscr{W} ^\pm &=   \mathcal{A}_ \kappa  ^\pm \mathscr{V} ^\pm \,,
\end{align*} 
we have our alternative semi-ALE description of the $ \kappa $-problem:
\begin{subequations}\label{HS_semiALE_reg2}
\begin{alignat}{2}
\mu^\pm \mathscr{W} ^\pm+ \mathcal{A} ^\pm_\kappa (\mathcal{A} ^\pm_\kappa)^T\nabla(\mathscr{Q}^\pm+\rho^\pm h^{\kappa\kappa})&=0 && \text{in}\quad \bbR^2_\pm \times [0,T_\kappa ]\,,\\
\jump{\mathscr{W} \cdot e_2}=\jump{\mathscr{Q}}&= 0 && \text{on}\quad \Gamma \times [0,T_ \kappa ]\,,\\
\div\,  \mathscr{W} ^\pm&= 0 &&\text{in}\quad \bbR^2_\pm\times [0,T_ \kappa ]\,,\\
h_t &= \mathscr{W} \cdot e_2 \qquad &&\text{on}\quad \Gamma\times (0,T_ \kappa ] \,,\\
h &= \jeps h_0 &&\text{on}\quad\Gamma  \times \{t=0\}\,.
\end{alignat}
\end{subequations}
Note well that a solution to (\ref{HS_semiALE_reg2}) give a solution to (\ref{HS_Eulerian_kappa}) and hence a solution to the original semi-ALE
formulation (\ref{HS_semiALE_reg}).   

\subsection{The construction of solutions to the $ \kappa $-problem (\ref{HS_semiALE_reg})}
In this section we prove the following result:
\begin{proposition}\label{approxsol}
For $h_0\in H^2$, there exist a time $T_\kappa$ and a unique solution $h\in C([0,T_\kappa],H^2(\bbR))$ to the approximate $ \kappa $-problem (\ref{HS_semiALE_reg}a-i).
\end{proposition}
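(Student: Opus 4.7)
The plan is to recast the $\kappa$-problem as an abstract ODE $h_t = F_\kappa(h)$ in the Banach space $H^2(\bbR)$ and apply the Banach fixed-point theorem (Picard-Lindel\"of). The mollifiers appearing in (\ref{HS_semiALE_reg}e,i) are the entire point: they decouple the analytic difficulties of the nonlinear free-boundary problem from the construction of approximate solutions, since $h \mapsto h^{\kappa\kappa}$ is a bounded linear map from $H^2(\bbR)$ into $H^s(\bbR)$ for any $s$, with operator norm depending on $\kappa$.

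First, for a fixed $h \in H^2(\bbR)$ with $|h|_2 \le M$, I would construct the associated diffeomorphism. The harmonic extension in (\ref{HS_semiALE_reg}d-g) with boundary data $h^{\kappa\kappa}$ produces $\delta\psi_\kappa^\pm$ that is $C^\infty$ on $\bbR^2_\pm$ with every Sobolev norm bounded by $C(\kappa,M)$. Provided $\kappa$ is small enough that $\sigma_s$-type smallness transfers (or, alternatively, provided we localize in a ball of $H^2$ so that $|h^{\kappa\kappa}|_{1.5+s}$ is controlled), the estimate (\ref{diffeo}) guarantees that $J_\kappa^\pm \ge \tfrac12$ and $\psi_\kappa^\pm$ is a smooth diffeomorphism; hence $A_\kappa^\pm = (\nabla\psi_\kappa^\pm)^{-1}$ is smooth as well. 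Next, the elliptic system (\ref{HS_semiALE_reg}a-c) for $(w^\pm,Q^\pm)$ with frozen smooth coefficients $A_\kappa^\pm, J_\kappa^\pm$ is a standard two-phase transmission problem: substituting $w^\pm$ from (\ref{HS_semiALE_reg}a) into $\div w^\pm=0$ yields a uniformly elliptic divergence-form equation for $Q^\pm+\rho^\pm \delta\psi_\kappa^\pm$ on $\bbR^2_\pm$, with the transmission condition $\jump{Q}=0$ and $\jump{w\cdot e_2}=0$ on $\Gamma$. By standard Lax-Milgram / transmission theory (with decay at infinity fixing $Q$ up to a constant), this problem has a unique solution $(w^\pm,Q^\pm)$, and classical elliptic regularity then promotes regularity to any Sobolev space, with bounds depending on $\kappa$ and $M$.

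Define $F_\kappa(h) := w[h]\cdot e_2\big|_{x_2=0}$. The second step is to show that $F_\kappa : B_M \to H^2(\bbR)$ is well-defined and Lipschitz on the ball $B_M = \{|h|_2\le M\}$. The elliptic solution operator $h \mapsto (w[h], Q[h])$ depends smoothly on the coefficients $A_\kappa^\pm, J_\kappa^\pm$, which in turn depend smoothly (and Lipschitz-continuously in every Sobolev norm) on $h$ via the $\kappa$-mollified harmonic extension. Hence for $h_1,h_2 \in B_M$, standard difference estimates for transmission problems with smooth coefficients give
\[
\|w[h_1]-w[h_2]\|_{H^s(\bbR^2_\pm)} \le C(\kappa,M) |h_1-h_2|_2 \qquad \text{for any } s\ge 0,
\]
and the trace theorem provides $|F_\kappa(h_1)-F_\kappa(h_2)|_2 \le C(\kappa,M)|h_1-h_2|_2$.

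With these bounds in hand, the last step is routine: (\ref{HS_semiALE_reg}h-i) is the abstract ODE $\dot h = F_\kappa(h)$, $h(0)=\jeps h_0 \in H^2(\bbR)$, with a locally Lipschitz right-hand side on $H^2(\bbR)$. The Picard-Lindel\"of theorem in the Banach space $H^2(\bbR)$ yields a unique $h\in C([0,T_\kappa],H^2(\bbR))$ for some $T_\kappa=T_\kappa(\kappa,|h_0|_2)>0$, and reconstructing $(w^\pm,Q^\pm,\psi_\kappa^\pm)$ from $h$ as above gives the full solution to (\ref{HS_semiALE_reg}a-i). The main (and really the only) subtlety is verifying the $\kappa$-dependent Lipschitz bound on $F_\kappa$: one must track carefully how differences of $A_\kappa^\pm, J_\kappa^\pm$ are controlled by $|h_1-h_2|_2$ via the smoothing property of $\jeps\jeps$ and the Poisson kernel, which is where all $\kappa^{-1}$ factors accumulate; but no cancellation or sharp estimate is needed, since we only seek short-time existence at the approximate level. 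Uniform-in-$\kappa$ bounds are deferred to the subsequent sections.
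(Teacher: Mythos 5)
Your strategy---reduce to an abstract ODE $h_t = F_\kappa(h)$ in $H^2(\bbR)$ and apply Picard--Lindel\"of---is the same fixed-point idea as the paper's, which runs a contraction for the pair $(h,h_t)$ on the space $X = \{(f,f_t): f\in C(0,T;H^2),\ f_t\in L^2(0,T;L^2)\}$. But you freeze the coefficients along the wrong family of diffeomorphisms, and that creates a real gap.

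The paper's linear iteration (\ref{HS_semiALE_reg_fix}) uses the shear map $\overline{\Psi}^\pm = (x_1,\, x_2 + \bar h^{\kappa\kappa})$ from the alternative semi-ALE formulation (\ref{HS_semiALE_reg2}); this map has unit Jacobian for \emph{every} $\bar h$ and is therefore always a diffeomorphism of $\bbR^2_\pm$. You instead freeze along the harmonic-extension map $\psi_\kappa^\pm = e + \delta\psi_\kappa^\pm e_2$, which by (\ref{diffeo}) is a diffeomorphism only when $|h^{\kappa\kappa}|_{1.5+s}$ is small. Your claim that ``$\kappa$ small enough'' supplies this smallness is false: mollification does not shrink lower-order Sobolev norms, so for $|h|_2\le M$ with $M$ large, $|h^{\kappa\kappa}|_{1.5+s}$ can be of order $M$ regardless of $\kappa$, and localizing in a ball of $H^2$ gives boundedness, not smallness. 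Thus your argument only proves the proposition under an extra smallness hypothesis on $h_0$ that the statement does not contain---a hypothesis that cannot be imposed when this construction is reused for the arbitrary-size one-phase problem in Section 7. The shear map eliminates the obstruction entirely, which is the whole point of introducing (\ref{HS_semiALE_reg2}).

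A second, smaller issue is the pressure on the unbounded domain: $Q$ need not lie in $L^2(\bbR^2)$, so ``standard Lax--Milgram with decay at infinity'' is not a complete argument. The paper adds a mass term $\gamma Q_\gamma$ so Lax--Milgram applies in $H^1(\bbR^2)$, proves the $\gamma$-uniform estimate $\|\nabla Q_\gamma\|_{0,\pm}\le C|\bar h^{\kappa\kappa}|_{0.5}$ by testing with $Q_\gamma$, and passes $\gamma\to 0$, identifying the weak limit of $\nabla Q_\gamma$ as $\nabla Q$ with $Q\in L^2_{\rm loc}(\bbR^2)\cap\dot{H}^1(\bbR^2_\pm)$ via a local Poincar\'e argument; some such argument is required. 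The remaining pieces of your sketch---the $\kappa$-dependent Lipschitz estimate on the frozen-coefficient solution map, the trace theorem, the short-time contraction---match the paper's proof.
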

Given
$$\bar{h}\in C([0,T];H^2(\bbR)) \ \text{ and } \ \bar{h}_t\in L^2(0,T;L^2(\bbR))\,,$$
we consider the following  linear problem:
\begin{subequations}\label{HS_semiALE_reg_fix}
\begin{alignat}{2}
\mu^\pm w^\pm+\bar{J}^\pm \bar{A}^\pm (\bar{A}^\pm)^T\nabla(Q^\pm+\rho^\pm\bar h^{ \kappa \kappa })&=0 && \text{in}\quad \{\bbR^2_{\pm}\}\,,\\
\jump{w\cdot e_2}=\jump{Q}&= 0 && \text{in}\quad \{x_2 \ne 0\}\,,\\
\div\, w^\pm &= 0 &&\text{in}\quad \bbR^2_\pm\,,\\
\overline{\Psi}^{\pm } &= (x_1, x_2 +  \bar{h}^{\kappa\kappa}) \ \  &&\text{in}\quad \bbR^2_\pm\,,\\
h_{ t} &= w \cdot e_2 \qquad &&\text{on}\quad \{x_2 = 0\}\,,\\
h &= \jeps h_0 &&\text{on}\quad \bbR \times \{t=0\}\,.
\end{alignat}
\end{subequations}
To simplify notation, we have dropped the $\kappa$-subscript used to indicate implicit dependence on $ \kappa $, but we have kept the 
$ \kappa $-superscript to indicate 
an explicit mollification operation; in particular,  
$$\bar{h}^{\kappa}=\jeps \bar{h} \  \text{ and } \  \bar{h}^{\kappa\kappa}=\jeps\jeps \bar{h} \,.$$
Note that
 $$
\|\nabla\overline{\Psi}^\pm - \operatorname{Id} \|_{s-1,\pm}\leq C|\bar{h}^{\kappa\kappa}|_{s} \leq C(\kappa,s)|\bar{h}^\kappa|_0.
$$ 
We shall also (temporarily)  drop the  $(\cdot)^\pm$ notation on $A$, $\psi$, $\rho$, and $\mu$, as it will be clear from the context which phase we are 
analyzing.
\subsubsection{The existence of $\nabla Q^\pm$}  
Taking the divergence of (\ref{HS_semiALE_reg_fix}a) we obtain the elliptic equation for $Q^\pm$ 
\begin{equation}\label{eqQapprox}
-\text{div}\left(\frac{1}{\mu} \overline{A}\, \overline{A}^T\nabla Q^\pm \right)=\text{div}\left( \frac{\rho}{\mu}  \overline{A}\, \overline{A}^T  \nabla \bar{h}^{ \kappa \kappa } \right)   \text{ in } 
\mathbb{R}^2  _{\pm}\,,
\end{equation}
where $\nabla \bar{h}^{ \kappa \kappa }= (\bar{h}_{,1}^{\kappa \kappa } , 0)$.
Due to the fact that the domain is unbounded, we consider a constant $ \gamma$ satisfying
$0<\gamma<{\frac{1}{2}} $, and define the following  elliptic equation in $ \mathbb{R}^2_\pm$ for the modified pressure functions $Q_\gamma^\pm $:
\begin{equation}\label{cgsQ}
\gamma Q_\gamma^\pm -\text{div}\left(\frac{1}{\mu} \overline{A}\, \overline{A}^T\nabla Q^\pm \right)=\text{div}\left( \frac{\rho}{\mu}  \overline{A}\, \overline{A}^T  \nabla \bar{h}^{ \kappa \kappa } \right)
 \text{ in } 
\mathbb{R}^2  _{\pm}\,.
\end{equation} 
Using (\ref{HS_semiALE_reg_fix}a) and (\ref{HS_semiALE_reg_fix}b), we 
supplement (\ref{cgsQ}) with  the following  jump conditions across $\{x_2=0\}$:
\begin{equation}\label{jump0}
\jump{Q_\gamma }=0
\end{equation} 
and
\begin{equation}\label{jump1}
\bigjump{\left((1/\mu) \overline{A}\, \overline{A}^T\nabla Q_\gamma \right)\cdot e_2}=-\jump{(\rho/\mu))\left(\overline{A}\,\overline{A}^T\nabla \bar{h}^{ \kappa \kappa }\right)\cdot e_2}.
\end{equation}
Recall that a function $Q_\gamma = {Q_\gamma}^+ {\bf 1}_{\bbR^2_+} +  {Q_\gamma}^- {\bf 1}_{\bbR^2_-} \in H^1 ( \mathbb{R}^2  )$ is said to be a weak solution of
(\ref{cgsQ})--(\ref{jump1}) if
\begin{equation}\label{cgs_weak}
\gamma\int_{\bbR^2_+ \cup \bbR^2_-} Q_\gamma P\, dx+\int_{\bbR^2_+ \cup \bbR^2_-}  {\frac{1}{\mu}}  \overline{A}\, \overline{A}^T \, \nabla Q_\gamma \, \nabla P dx = 
\int_{\{x_2=0\}}g\,  Pdx_1 +\int_{\bbR^2_+ \cup \bbR^2_-} f \, P  dx
\end{equation} 
for all  $P\in H^1(\bbR^2)$, 
where $g = \jump{(\rho/\mu)) \overline{A}\,\overline{A}^T\nabla \bar{h}^{\kappa\kappa}}  \cdot e_2$ and $f =\text{div}\left((\rho/\mu)  \overline{A}\,\overline{A}^T\nabla \bar{h}^{\kappa\kappa}\right)$.

This problem can be written as
$$
B(Q_\gamma,P)=\mathcal{L}_1(P)+\mathcal{L}_2(P)\text{ for all }P\in H^1(\bbR^2)\,,
$$
where
\begin{align*} 
B(Q_\gamma,P) &=\gamma\int_{\bbR^2_+ \cup \bbR^2_-} Q_\gamma Pdx+\int_{\bbR^2_+ \cup \bbR^2_-}\nabla P\left((1/\mu) \overline{A}\, \overline{A}^T\nabla Q_\gamma\right) dx,
\\
\mathcal{L}_1(P)&=\int_{\{x_2=0\}}\jump{(\rho/\mu)\left( \overline{A}\, \overline{A}^T\nabla \bar{h}^{ \kappa \kappa } \right)\cdot e_2} Pdx_1, \\
\mathcal{L}_2(P)&=\int_{\bbR^2_+ \cup \bbR^2_-}P\text{div}\left((\rho/\mu)  \overline{A}\,\overline{A}^T\nabla \bar{h}^{ \kappa \kappa }\right)dx.
\end{align*} 
The existence of $Q_\gamma \in H^1( {\bbR^2})$ will  follow from the Lax-Milgram theorem, once we verify the necessary hypotheses.
From the fundamental theorem of calculus, we have that
$$
\|\overline{A}_0\overline{A}_0^T-\overline{A}( \cdot , t) \overline{A}^T(\cdot ,t)\|_{L^\infty}\leq  C_ \kappa \sqrt{t} \int_0^t|\bar h_t(s)|^2_0ds \le C_ \kappa \sqrt{t}  \,,
$$
where $C_ \kappa $ is a constant which depends on $ \kappa $. Since $[\overline{A}_0 \overline{A}_0^T]^i_j   \xi _i\xi _j \ge \lambda | \xi |^2$, we see that for $t$ sufficiently small, 
$$
 {\frac{\lambda }{2}} |\xi|^2\leq [\overline{A} (\cdot, t ) \overline{A}^T(\cdot , t)]^i_j\xi^i\xi^j\leq 2 \lambda |\xi|^2,
$$
The bilinear form is bounded, as
$$
|B(Q_\gamma,P)|\leq C(\bar{h}^{\kappa\kappa})\|Q_\gamma\|_{1,\pm}\|P\|_{1,\pm},
$$
and it is also coercive, since
$$
|B(Q_\gamma,Q_\gamma)|
\geq c(\gamma,\lambda) \|Q_\gamma\|^2_{1,\pm}.
$$
Thus, we need to prove that $\mathcal{L}_i(P)$ are continuous functionals on $H^1(\bbR^2)$. We have that
$$
|\mathcal{L}_1(P)|\leq C(\bar{h}^{\kappa\kappa})|P|_0\leq C(\bar{h}^{\kappa\kappa})\|P\|_{1,\pm},
$$
and using the divergence theorem,
\begin{align*} 
|\mathcal{L}_2(P)| 
&\leq\left|\int_{\bbR^2_+ \cup \bbR^2_-}\nabla P(\rho/\mu)  \overline{A}\,\overline{A}^T\nabla \overline{A} dx\right|+\left|\int_{\{x_2=0\}} P\jump{(\rho/\mu) \overline{J}\, \overline{A}\, \overline{A}^T\nabla \bar{h}^{\kappa\kappa}}e_2dx_1\right|\\
& \leq C(\bar{h}^{\kappa\kappa})\left(\|\nabla P\|_{0,\pm}+|P|_0\right)\leq C(\bar{h}^{\kappa\kappa})\|P\|_{1,\pm}.
\end{align*} 
We have thus verified the hypotheses of the Lax-Milgram theorem.
 
To obtain estimates which are uniform in $\gamma$,  we test (\ref{cgsQ}) with $Q_\gamma$, and integrate by parts. Since $Q_\gamma \in H^1( \mathbb{R}^2  )$, ${Q_\gamma}^+ = {Q_\gamma}^-$ on $\{x_2=0\}$ (in the sense of trace); hence, 
we have that
\begin{align*} 
{\frac{1}{2}} \|\nabla Q_\gamma\|_{0,\pm}^2 & \leq -\int_{\{x_2=0\}} Q_\gamma \jump{\left((\overline{J}/\mu) \overline{A} \,\overline{A}^T\nabla Q_\gamma\right)\cdot e_2} dx_1
+\int_{\bbR^2_+ \cup \bbR^2_-}\nabla Q_\gamma(\rho/\mu) \overline{A}\, \overline{A}^T\nabla \bar{h}^{\kappa\kappa} dx \\
& \qquad
-\int_{\{x_2=0\}} Q_\gamma\jump{\left((\rho/\mu)  \overline{A}\, \overline{A}^T\nabla  \bar{h}^{ \kappa \kappa } \right)\cdot e_2}dx_1.
\end{align*} 
In particular, using the jump condition \eqref{jump1}, we find that
\begin{equation}\label{cgs1001}
\|\nabla Q_\gamma\|_{0,\pm}\leq C|\bar{h}^{\kappa\kappa}|_{0.5},
\end{equation} 
where the constant in the right-hand side is independent of $\gamma$.  As such, 
we obtain the existence of a weak limit $ \nabla Q_  \gamma \rightharpoonup F\in L^2(\bbR^2)$;  moreover, the weak limit is a gradient:  $F=\nabla Q$. Indeed, if $U\subset \bbR^2$, by means of the Poincar\'e inequality, we have that
$$
\|Q_\gamma-\text{mean}(Q_\gamma)\|_{L^2(U)}\leq C(U)\|\nabla Q_\gamma\|_{0,\pm}.
$$
In particular,  we obtain that $Q_\gamma$ converges weakly in $L^2(U)$. We write $Q$ for this limit and note that $ \nabla Q$ also satisfies
(\ref{cgs1001}). 
 Thus, considering a test function $\phi$  with compact support within $U$,  as $ \gamma \to 0$, we have that
\begin{align*} 
\int_U \phi\nabla Q_\gamma dx=-\int_U \text{div}\phi Q_\gamma dx & \rightharpoonup -\int_U \text{div}\phi Q dx=\int_U \phi \nabla Q dx, \\
\int_U \phi\nabla Q_\gamma dx & \rightharpoonup \int_U \phi F dx.
\end{align*} 
Using the uniqueness of the weak limit, we conclude the claim.  We then easily obtain that $Q\in L^2_{loc}(\bbR^2)\cap \dot{H}^1(\bbR^2_\pm)$ is a distributional solution to 
$$
-\text{div}\left((1/\mu) \overline{A}\, \overline{A}^T\nabla Q\right)=\text{div}\left((\rho/\mu) \overline{A}\, \overline{A}^T\nabla \bar{h}^{ \kappa \kappa } \right).
$$
\subsubsection{The existence of $w$ and $h$}
We consider the Banach space
$$
X=\{(f,f_t),\,f\in C(0,T;H^2),\,f_t\in L^2(0,T;L^2)\},
$$
with norm
$$
\|(f,f_t)\|_X=\max_{0\leq s\leq t}\|f(s)\|_{H^2}+\left(\int_0^t\|f_t(s)\|_{L^2}^2ds\right)^{0.5}.
$$

We define the operator $S[\bar{h},\bar{h}_t]$ by
$$
S[\bar{h},\bar{h}_t] =\left(h(t),w_2(\cdot,0,t)\right)=\left(h(x_1,0)
+\int_0^t w(x_1,0,s)\cdot e_2ds,w(x_1,0,t)\cdot e_2\right).
$$

As $\bar{h}^{\kappa\kappa}$ is $C^\infty$, the same is true for $\bar{\Psi},\bar{A},\bar{J}$. The usual elliptic estimates for \eqref{eqQapprox} provide the regularity 
$$
\nabla Q\in L^\infty(0,T;C^\infty(\bbR^2_\pm)).
$$ 
Using (\ref{HS_semiALE_reg_fix}a), we have that
$$
w \in L^\infty(0,T;C^\infty(\bbR^2_\pm)).
$$
Consequently, the operator $S$ verifies
$$
S:X\rightarrow X.
$$

For two pairs ($\bar{h}_1,\bar{h}_{1t}$) and ($\bar{h}_2,\bar{h}_{2t}$), we estimate the
he Lipschitz norm:
\begin{eqnarray*}
\|S[\bar{h}_1,\bar{h}_{1t}]-S[\bar{h}_2,\bar{h}_{2t}]\|_X&\leq& T\max_{0\leq t\leq T}|(w_{1}(t)-w_{2}(t))\cdot e_2|_2 \\
&&+\left(\int_0^T |(w_1(s)-w_2(s))\cdot e_2|_0^2 ds\right)^{0.5}\\
&\leq& \sqrt{T}C_\kappa\|\left(\bar{h}_1,\bar{h}_{1t}\right)-\left(\bar{h}_2,\bar{h}_{2t}\right)\|_X
\end{eqnarray*}
Now, if $T=T_\kappa$ is chosen small enough, then the mapping $S$ is a contraction and then there exists a unique fixed-point, which is a local solution of our approximate $\kappa $-problem.

\section{Proof of Theorem \ref{localsmall}: Local well-posedness for the infinitely-deep case}\label{sec2}
\subsection{$\kappa$-independent estimates}\label{localsmall3} 
In this section, we prove that there is a time of existence $T^*$, independent of $\kappa$, and a priori estimates  on on $[0,T^*]$ also independent of 
$ \kappa $;  we  will thus be able to pass to the limit as $ \kappa \to 0$ and conclude the existence of a limiting function $h$.

To do so, we define the higher-order energy function (or norm) that will be shown to be bounded independent of $\kappa $:
\begin{equation}\label{energy}
E(t)=\max_{0\leq s\leq t}\{|h^{\kappa}(s)|_2^2\}+\int_0^t\|w(s)\|_{2,\pm}^2ds.
\end{equation}

Then, using Proposition \ref{approxsol}, there exists an approximate solution up to time $T_\kappa$ for every $\kappa>0$. We can take $T_\kappa$ as small as needed to ensure that
$$
\sup_{0\leq t\leq T_\kappa}E(t)\leq z^*,\;\forall \kappa>0,
$$
for a constant $z^*$ that will specified below.  A priori, these times $T_\kappa$ may tend to zero as $\kappa \to 0$. In the following sections, we are going to obtain uniform bounds for $E(t)$ up to a uniform time $T^*$, preventing the shrinking of the lifespan of the solution as $ \kappa \to 0$.
 
For the sake of clarity, we take $s=0.25$ in the statement of Theorem \ref{localsmall} (the proof for general $s$ is  analogous) and consider $\sigma\ll1$ a universal constant (that will be specified below). Furthermore, we take $T_\kappa$ small enough so we can ensure that
\begin{equation}\label{smallnessbootstrap}
\sup_{0\leq t\leq T_\kappa}|h^\kappa(t)|_{1.75}< \sigma.
\end{equation} 

\subsubsection{The estimates of $\delta\psi,J$} 
Using classical elliptic theory for the equations (\ref{HS_semiALE_reg}d-f), we get
\begin{equation}\label{ellipticdeltapsi}
\|\nabla \delta\psi\|_{s,\pm}\leq C|h^{\kappa\kappa}|_{s+0.5}, 
\end{equation}
thus,
$$
\|\nabla\psi- \operatorname{Id} \|_{s,\pm}\leq C|h^{\kappa\kappa}|_{s+0.5}, \text{ and }\|J- 1\|_{s,\pm}=\|\delta\psi_{,2}\|_{s,\pm}\leq C|h^{\kappa\kappa}|_{s+0.5}.
$$

\subsubsection{Estimates for $h\in L^\infty(0,T_ \kappa ;L^2(\bbR))$, $v^\pm\in L^2(0,T_ \kappa ;L^2(\bbR^2_\pm))$}\label{secmaxprinL2}
We let $a = J\, A$ denote the cofactor matrix of $ \nabla \psi$; using the fact that
$ \nabla \psi^2 = e_2$, we  write (\ref{HS_ALE}a) as
\begin{equation}\label{cgs1000}
\mu J v^i + a^k_i \left(q + \rho  \psi^2\right)_{,k} = 0 \text{ in } \mathbb{R}^2_\pm \times (0,T_ \kappa ] \,.
\end{equation} 
Since on $\Gamma:= \mathbb{R}  $,  $\psi^2 = h$ and $v \cdot \tilde n = h_t$, taking the $L^2( \mathbb{R}^2_\pm)$ inner-product of (\ref{cgs1000}) with 
$v^i$, the fact that $a^k_i,_k =0$  by the Piola identity and that $ a^k_i N_k = \tilde n$ to obtain the basic $L^2$ energy law:
$$
\frac{1}{2}\frac{d}{dt}\|\jeps h(t)\|^2_{L^2(\bbR)}+\frac{1}{-\jump{\rho}}\|\sqrt{\mu^\pm J} v^\pm\|_{L^2(\bbR^2_\pm)}^2=0.
$$
Integrating in time, we find that
$$
|h^{\kappa\kappa}(t)|_0\leq |\jeps h(t)|_{0}\leq |\jeps\jeps h_0|_{0}\leq |h_0|_{0},
$$
and
$$
\mu^\pm\int_0^t\|\sqrt{J} v^\pm\|_{L^2(\bbR^2_\pm)}^2 ds\leq -\jump{\rho}|h_0|_{0}.
$$
From (\ref{cgs1001}) and the smallness bound \eqref{smallnessbootstrap}, we see that
$$
\|\nabla Q\|_{0,\pm}\leq C \sigma ,\,\text{and}\,
\|w\|_{0,\pm}\leq C \sigma \,.
$$

\subsubsection{Verifying the smallness condition for $|h^{\kappa}|_{1.75},\|w\|_{1.5,\pm}$}\label{sec4.3}
Using (\ref{HS_semiALE_reg_fix}g) together with the Cauchy-Schwarz and trace inequalities, we have that
\begin{equation}\label{H1.5}
|h(t)-\jeps h_0|_{1.5}=\left|\int_0^t w\cdot e_2 ds\right|_{1.5} \leq \sqrt{t}\sqrt{C\int_0^t\|w(s)\|_{2,\pm}^2 ds} \leq \sqrt{tCE(t)} \,,
\end{equation}
and that
$$
|h^{\kappa }(t)-\jeps^2 h_0|_{1.5}=\left|\jeps\int_0^t w\cdot e_2 ds\right|_{1.5} \leq \sqrt{tCE(t)}.
$$
We can ensure that $|h^{\kappa}(t)|_2\leq \sqrt{z^*}$, so that
$$
|h^{\kappa}(t)-\jeps^2 h_0|^2_{1.75}\leq C|h^{\kappa}(t)-\jeps^2 h_0|_{1.5}|h^{\kappa}(t)-\jeps^2 h_0|_2\leq C\sqrt{t}z^*,
$$ 
and, by choosing
\begin{equation}\label{T1}
T_\kappa\leq T^*_1=\left(\frac{(\sigma-|h_0|_{1.75})^2}{4Cz^*}\right)^2,
\end{equation}
we have that
\begin{equation}\label{H1.75}
|h^{\kappa}(t)|_{1.75}\leq |h_0|_{1.75}+\sqrt{C\sqrt{t}z^*}<\sigma,\,\,\forall\,0\leq t\leq T^\kappa.
\end{equation}

Using (\ref{HS_semiALE_reg}a,c) and the fact that $\delta \psi$ is the harmonic extension of  $h$,  it follows that $Q$ satisfies
\begin{alignat*}{2}
\text{div}\left[(J/\mu) A A^T\nabla Q\right]&=\frac{\rho}{\mu}\text{div}\left[\left(\text{Id}-J A A^T\right)\nabla \delta\psi\right]\qquad&&\text{in}\quad \bbR^2_\pm\,,
\end{alignat*}
with jump conditions given by (\ref{HS_semiALE_reg}b) and  \eqref{jump1}.  It follows that we have the following elliptic equation for $Q$:
\begin{subequations}\label{Qe_eq}
\begin{alignat*}{2}
\mu^{-1}\Delta Q &= \div \big[ \mu^{-1}(\id - J A A^T) \nabla (Q + \rho \delta\psi) \big] &&\text{in}\quad \bbR^2_\pm, \\
\jump{Q} &= 0 &&\text{on}\quad\{x_2 = 0\},\\
\bigjump{\mu^{-1}\smallexp{$\displaystyle{} \frac{\p Q}{\p \rN}$}} &= \jump{\mu^{-1}(\id - J A A^T) (\nabla Q) e_2} - \jump{\mu^{-1}\rho} J A^2_i A^j_i \delta \psi_{,j} \quad\ &&\text{on}\quad\{x_2 = 0\}.
\end{alignat*}
\end{subequations}

From standard elliptic estimates,
\begin{eqnarray*}
\|\nabla Q\|_{1.25,\pm} &\leq& C \Big[\big\|(\id - J A A^T) \nabla (Q + \rho \delta\psi)\big\|_{1.25,\pm} \\
&& + \big|\jump{\mu^{-1}(\id - J A A^T) \nabla Q e_2}\big|_{0.75} + \big|\jump{\mu^{-1}\rho A^2_i A^j_i \delta \psi_{,j}}\big|_{0.75}\Big] \\
&\leq& C \Big[\big\|\id - J A A^T\big\|_{L^\infty(\bbR^2)} \big(\|\nabla Q\|_{1.25,\pm} + \rho \|\nabla\delta\psi\big\|_{1.25,\pm}\big) \\
&&+ \big\|\id - J A A^T\big\|_{1.25,\pm} \big(\|\nabla Q\|_{L^\infty(\bbR^2)} + \rho \|\nabla\delta\psi\|_{L^{\infty}(\bbR^2)}\big) \\
&&+ \big|\jump{\mu^{-1}\rho JA^2_i A^j_i \delta \psi_{,j}}\big|_{0.75}\Big],
\end{eqnarray*}
where the constant $C$ depends on $\mu^\pm$. Using the smallness condition \eqref{H1.75}, we have
$$
\big|J A^2_i A^j_i \delta \psi_{,j}\big|_{0.75} \leq C \left(|h^{\kappa\kappa}|_{1.75},\,
\big\|\id - J A A^T\big\|_{1.25,\pm}+|h^{\kappa\kappa}|_{1.75}\right)\leq C |h^{\kappa\kappa}|_{1.75}.
$$
As a consequence, we have
\begin{equation}\label{Q1.25}
\|\nabla Q\|_{1.25,\pm} \leq C|h^{\kappa\kappa}|_{1.75}.
\end{equation}

For the higher norm, we have
\begin{align*}
\|\nabla Q\|_{1.5,\pm} &\le C \Big[\big\|(\id - J A A^T) \nabla (Q + \rho \delta\psi)\big\|_{1.5,\pm} \\
&\qquad + \big|\jump{\mu^{-1}(\id - J A A^T) \nabla Q e_2}\big|_1 + \big|\jump{\mu^{-1}\rho A^2_i A^j_i \delta \psi_{,j}}\big|_1\Big] \\
&\le C \Big[\big\|\id - J A A^T\big\|_{L^\infty(\bbR^2)} \big(\|\nabla Q\|_{1.5,\pm} + \rho \|\nabla\delta\psi\big\|_{1.5,\pm}\big) \\
&\qquad + \big\|\id - J A A^T\big\|_{1.5,\pm} \big(\|\nabla Q\|_{L^\infty(\bbR^2)} + \rho \|\nabla\delta\psi\|_{L^{\infty}(\bbR^2)}\big) \\
&\qquad + \big|\jump{\mu^{-1}\rho J A^2_i A^j_i \delta \psi_{,j}}\big|_1\Big]\\
&\le C \Big[\big\|\id - J A A^T\big\|_{L^\infty(\bbR^2)} \big(\|\nabla Q\|_{1.5,\pm} + \rho \|\nabla\delta\psi\big\|_{1.5,\pm}\big) \\
&\qquad + \big\|\id - J A A^T\big\|_{1.5,\pm} \big(\|\nabla Q\|_{1.25,\pm} + |h^{\kappa\kappa}|_{1.75} \big) \\
&\qquad + \big|\jump{\mu^{-1}\rho J A^2_i A^j_i \delta \psi_{,j}}\big|_1\Big]\,.
\end{align*}

Using \eqref{JAtN_id} and \eqref{jump1},
$$
\big|J A^2_i A^j_i \delta \psi_{,j}\big|_1 \leq C |h^{\kappa\kappa}|_{1.75}(1+|h^{\kappa\kappa}|_{1.75}) |h^{\kappa\kappa}|_2,
$$
and
$$
\big\|\id - J A A^T\big\|_{1.5,\pm}\leq C |h^{\kappa\kappa}|_{2}(1+|h^{\kappa\kappa}|_{1.75})\leq C |h^{\kappa\kappa}|_{2}.
$$
Using $1+\sigma<2$,
$$
\|\nabla Q\|_{1.5,\pm} \leq (C|h^{\kappa\kappa}|_{1.75} \big(1 + |h^{\kappa\kappa}|_{1.75}\big))\|\nabla Q\|_{1.5,\pm}
+C|h^{\kappa\kappa}|_{2}|h^{\kappa\kappa}|_{1.75}+|h^{\kappa\kappa}|_{2}|h^{\kappa\kappa}|_{1.75},
$$
and, using the smallness condition \eqref{H1.75},
\begin{equation}\label{Q1.5}
\|\nabla Q\|_{1.5,\pm} \leq C|h^{\kappa\kappa}|_{1.75}|h^{\kappa\kappa}|_{2}.
\end{equation}

Using (\ref{HS_semiALE1}a) and \eqref{eq1}, we obtain
\begin{align} 
\|w\|_{1.5,\pm} & \leq C|h^{\kappa\kappa}|_{1.75}|h^{\kappa\kappa}|_{2} \,,\\
\|w\|_{1.25,\pm} & \leq C|h^{\kappa\kappa}|_{1.75} \,.
\label{w1.25}
\end{align}

\subsubsection{The Rayleigh-Taylor stability condition revisited}\label{sec2.5.4} Once we have the smallness condition
$$
\sup_{0\leq t\leq T^\kappa}\|w(t)\|_{1.25,\pm} \leq C|h^{\kappa\kappa}(t)|_{1.75}\leq \sigma,
$$
we find that 
\begin{equation}\label{smallv}
\sup_{0\leq t\leq T^\kappa}\|v(t)\|_{1.25,\pm}\leq C\sup_{0\leq t\leq T^\kappa}\left\|\nabla \psi\cdot w\right\|_{1.25,\pm}\leq C\sigma.
\end{equation}
The Rayleigh-Taylor stability condition is controlled as follows:
$$
RT(t)>-\frac{\jump{\rho}}{2} -|\jump{\mu}|2\|v\|_{1.25,\pm}\geq -\frac{\jump{\rho}}{2} -|\jump{\mu}|C\sigma.
$$
Consequently, if we impose
$$
\sigma\leq \frac{-\jump{\rho}}{4C|\jump{\mu}|},
$$
the Rayleigh-Taylor stability condition is satisfied for every time $0\leq t\leq T^\kappa$. Furthermore, we have
\begin{equation}\label{RTalltime}
-\jump{\rho}-\jump{\mu}v\cdot n\sqrt{1+ h^{\kappa\kappa\prime 2}}\geq-\jump{\rho}-|\jump{\mu}|2\|v\|_{1.25,\pm}\geq \frac{-\jump{\rho}}{2},\,\,\forall\,0\leq t\leq T^\kappa.
\end{equation}

\subsubsection{Estimates for $h\in L^2(0,T;H^{2.5}(\bbR))$}\label{sectionH2.5}
Taking the inner-product of the equation (\ref{HS_ALE}a) with the tangent vector
$\psi'$, we find that
$$
\mu^\pm v^\pm \cdot \psi' + Q^\pm_{,1} + \rho^\pm \delta \psi,_1 = 0 \qquad\text{on}\quad\{x_2=0\}\,.
$$
Taking the difference of the equations above, by (\ref{HS_semiALE_reg}e), we obtain that
$$
\jump{\mu v \cdot \psi'} + \jump{\rho} h^{\kappa\kappa}_{,1} = 0\,.
$$
Then the equation above implies that
$$
\Bigjump{\mu v \cdot \frac{(1,h^{\kappa\kappa\prime})}{\sqrt{1+ h^{\kappa\kappa\prime 2}}}} + \jump{\rho} \frac{h^{\kappa\kappa\prime}}{\sqrt{1+ h^{\kappa\kappa\prime 2}}} = 0\,.
$$
Differentiating the equation above with respect to $x_1$ and using that the normal velocity is continuous, we conclude that $h^{\kappa \kappa}$ satisfies that
\begin{equation}\label{h_reg_eq}
-\jump{\rho}h^{\kappa\kappa\prime\prime} = \jump{\mu}(v\cdot n)\sqrt{1+h^{\kappa\kappa\prime 2}}h^{\kappa\kappa\prime\prime}+(1+h^{\kappa\kappa\prime 2}) \jump{\mu v' \cdot (1,h^{\kappa\kappa\prime})}\,.
\end{equation}

By Proposition \ref{H0.5_fg} and the trace theorem, the inequality above further implies that
\begin{eqnarray}\label{h2.50}
|h^{\kappa\kappa\prime\prime}|_{0.5} &\leq& C (1+\left|h^{\kappa\kappa}\right|^3_{1.75}) \left(\left|v^+\right|_{1.5}+\left|v^-\right|_{1.5}\right)+C |h^{\kappa\kappa\prime\prime}|_{0.5} |v\cdot (-h^{\kappa\kappa\prime},1)|_{0.75} \,.
\end{eqnarray}
Since $ v = \frac{\nabla \psi w}{J}$,
\begin{align}\label{h2.51}
|v^\pm|_{1.5} &\leq C\left\|\frac{(\text{Id}+\nabla(\delta\psi^\pm e_2)) w^\pm}{J}\right\|_{2,\pm}\nonumber\\
&\leq C\left\|\frac{w^\pm}{J}\right\|_{2,\pm}+C\left\|\frac{\nabla (\delta\psi^\pm e_2) w^\pm}{J}\right\|_{2,\pm}
\end{align}
with
\begin{eqnarray}\label{h2.52}
\left\|\frac{w^\pm}{J}\right\|_{2,\pm}&\leq& C\left\|w^\pm\right\|_{2,\pm}+C\|w^\pm\|_{L^\infty(\bbR^2)}\|\nabla\delta\psi\|_{2,\pm}+C\|\nabla w\|_{L^4}\|\nabla \delta\psi_{,2}\|_{L^4}\nonumber\\
&\leq& C\left\|w^\pm\right\|_{2,\pm}+C\|w^\pm\|_{1.25,\pm}\|\nabla\delta\psi\|_{2,\pm}+C\|w\|_{1.5,\pm}\|\nabla\delta\psi\|_{1.5,\pm},
\end{eqnarray}
\begin{eqnarray}\label{h2.53}
\left\|\frac{\nabla (\delta\psi^\pm e_2) w^\pm}{J}\right\|_{2,\pm}&\leq&  C\left\|w^\pm\right\|_{L^\infty(\bbR^2)}\left[\left\|\nabla\delta\psi\right\|_{2,\pm}\left(1+\left\|\nabla\delta \psi\right\|_{L^\infty(\bbR^2)}\right)\right.\nonumber\\
&&\left.+\left\|D^2\delta\psi^\pm\right\|_{L^4}^2\right]
+C\left\|\nabla \delta\psi J^{-1}\right\|_{L^\infty(\bbR^2)} \left\|w\right\|_{2,\pm}\nonumber\\
&\leq& C\left\|w\right\|_{1.25,\pm}\left[\left\|\nabla\delta\psi\right\|_{2,\pm}\left(1+\left\|\nabla\delta \psi\right\|_{1.25,\pm}\right)\right.\nonumber\\
&&\left.+\left\|\nabla\delta\psi\right\|_{1.5,\pm}^2\right]+C\left\|\nabla \delta\psi\right\|_{1.25,\pm} \left\|w\right\|_{2,\pm}.
\end{eqnarray}
Collecting the estimates \eqref{h2.50}-\eqref{h2.53}, we get
\begin{align*}
|h^{\kappa\kappa}|_{2.5} & \leq C\|w\|_{2,\pm}+C (1+\left|h^{\kappa\kappa}\right|_{1.75})^4\left(\left|h^{\kappa \kappa}\right|_{2.5} \left\|w\right\|_{1.25,\pm}\right.\\
& \qquad  +\left|h^{\kappa\kappa}\right|_{1.75} \left\|w\right\|_{2,\pm}
\left.+\left|h^{\kappa \kappa}\right|_{2}^2+\left|h^{\kappa \kappa}\right|_{2}\|w\|_{1.5,\pm}\right)+C\|v\|_{1.25,\pm}|h^{\kappa\kappa}|_{2.5}.
\end{align*}
Using \eqref{w1.25}, \eqref{smallv} and the smallness condition \eqref{H1.75}, we have that
\begin{eqnarray*}
|h^{\kappa\kappa}|_{2.5} &\leq& C\|w\|_{2,\pm}+ C \left(\left|h^{\kappa\kappa}\right|_{1.75} \left\|w\right\|_{2,\pm}+\left|h^{\kappa \kappa}\right|_{2}^2+\left|h^{\kappa \kappa}\right|_{2}\|w\|_{1.5,\pm}\right).
\end{eqnarray*}
Consequently,
\begin{eqnarray}\label{h2.5}
\int_0^t|h^{\kappa\kappa}|^2_{2.5} &\leq& CE(t)+t(E(t))^2+ C\sigma^2 E(t).
\end{eqnarray}

\subsubsection{The energy estimates}\label{sec4.5}
Writing (\ref{HS_semiALE_reg}a) as
$$
\mu w + \nabla (Q + \rho \delta\psi) = \left(\text{Id}-\frac{(\nabla\psi)^T\nabla\psi }{J}\right)\mu w,
$$
differentiating with respect to $x_1$ twice, testing the resulting equation against $w''$, using integration-by-parts on the gradient term, and  using (\ref{HS_semiALE1}b), we find that
$$
\|\sqrt{\mu}w''\|_{0,\pm}^2 -\jump{\rho}\int_\bbR \jump{(Q + \rho \delta\psi)''w''\cdot N}dx_1 =  \int_{\bbR^2}\left(\left(\text{Id}-\frac{(\nabla\psi)^T\nabla\psi}{J}\right)\mu w\right)''w''dx.
$$
Using (\ref{HS_semiALE_reg}h), we see that 
$$
-\jump{\rho}\int_\bbR \jump{(Q + \rho \delta\psi)''w''\cdot N}dx_1 = \frac{\jump{\rho}}{2}\frac{d}{dt}|h^{\kappa\prime\prime}|_0^2 \,,
$$
and defining
$$RHS=\int_{\bbR^2}\left(\left(\text{Id}-\frac{(\nabla\psi)^T\nabla\psi }{J}\right)\mu w\right)''w''dx\,,$$
we have that
$$
\|\sqrt{\mu}w''\|_{0,\pm}^2 + \frac{\jump{\rho}}{2}\frac{d}{dt}|h^{\kappa\prime\prime}|_0^2 = RHS \,,
$$
and we proceed to estimate $RHS$.

Using the H\"{o}lder inequality together with \eqref{eq1}, we get that 
\begin{align*}
RHS & =\int_{\bbR^2}\left(\left(\text{Id}-\frac{(\nabla\psi)^T\nabla\psi }{J}\right)\mu w\right)''w''dx\\
& \leq C\|w''\|_{0,\pm}^2\left(\|\nabla\delta\psi\|_{L^\infty(\bbR^2)}^2+\|\nabla\delta\psi\|_{L^\infty(\bbR^2)}\right)\\
& \qquad +C\|w''\|_{0,\pm}\left[\|\nabla\delta\psi''\|_{0,\pm}\|w\|_{L^\infty(\bbR^2)}\right.\\
& \qquad \left.+\|w'\|_{L^4(\bbR^2)}\|\nabla\delta\psi'\|_{L^4(\bbR^2)}\right]\left(\|\nabla\delta\psi\|_{L^\infty(\bbR^2)}+1\right).
\end{align*}
Using the Sobolev inequality, we obtain that
\begin{align*} 
RHS& \leq C\|w''\|_{0,\pm}^2\left(\|\nabla\delta\psi\|_{1.25,\pm}^2+\|\nabla\delta\psi\|_{1.25,\pm}\right) \\
& \qquad 
+C\|w''\|_{0,\pm}\left[\|\nabla\delta\psi\|_{2,\pm}\|w\|_{L^\infty(\bbR^2)}+\|w\|_{1.5,\pm}^2
+\|\nabla\delta\psi\|_{1.5,\pm}^2\right]
\left(\|\nabla\delta\psi\|_{1.25,\pm}+1\right).
\end{align*}
Using the elliptic estimate \eqref{ellipticdeltapsi}, we find that
\begin{align*}
RHS & \leq C\|w\|_{2,\pm}^2\left(|h^{\kappa\kappa}|_{1.75}^2+|h^{\kappa\kappa}|_{1.75}\right)\\
& \qquad +C\|w\|_{2,\pm}\left[|h^{\kappa\kappa}|_{2.5}\|w\|_{1.25,\pm}+\|w\|_{1,\pm}\|w\|_{2,\pm}\right.\\
& \qquad \left.+|h^{\kappa\kappa}|_{1.5}|h^{\kappa\kappa}|_{2.5}\right]\left(|h^{\kappa\kappa}|_{1.75}+1\right).
\end{align*}
Recalling \eqref{w1.25}, we get that
$$
RHS\leq C\|w\|_{2,\pm}^2|h^{\kappa\kappa}|_{1.75}
+C\|w\|_{2,\pm}\left[|h^{\kappa\kappa}|_{1.75}\left(|h^{\kappa\kappa}|_{2.5}+\|w\|_{2,\pm}\right)+|h^{\kappa\kappa}|_{1.5}|h^{\kappa\kappa}|_{2.5}\right]
$$
Integrating in time and using \eqref{energy}, \eqref{H1.5}, \eqref{H1.75} and \eqref{h2.5}, we obtain that
$$
\int_0^tRHS\leq C\sigma\left(E(t)+t(E(t))^2\right)
$$
thus, we conclude that
\begin{equation}\label{w''}
\frac{-\jump{\rho}}{2}|h^{\kappa}(t)|_2^2+\min\{\mu^+,\mu^-\}\int_0^t\|w''\|_{0,\pm}^2\leq \frac{-\jump{\rho}}{2}|h_0|_2^2+C\sigma E(t)+tC(E(t))^2.
\end{equation}

\subsubsection{The Hodge decomposition elliptic estimates}\label{sec4.6}
Using Proposition \ref{normaltrace}, we have that
$$
|w^2|_{1.5}\leq |w''\cdot e_2|_{-0.5}\leq C\left(\|w''\|_{0,\pm}+\|\div w''\|_{0,\pm}\right)\leq C\|w''\|_{0,\pm}.
$$
Consequently, we can bound $\int_0^t |w^2|^2_{1.5}ds$ using \eqref{w''}. Using that $u$ is irrotational in each phase, we obtain $u^2_{,1}-u^1_{,2}=A^j_1v^2_{,j}-A^j_2v^1_{,j}=0$. Recalling 
$$
v=J^{-1}\nabla\psi\cdot w,\text {i.e. } v^j=J^{-1}\psi^{j}_{,i}w^i,
$$
and we get
\begin{eqnarray*}
w^2_{,1}-w^1_{,2}&=&w^2_{,1}-w^1_{,2}-A^j_1(J^{-1}\psi^{2}_{,i}w^i)_{,j}+A^j_2(J^{-1}\psi^{1}_{,i}w^i)_{,j}\\
&=& w^2_{,1}(1-A^1_1J^{-1}\psi^{2}_{,2})+w^1_{,2}(1-A^2_2J^{-1}\psi^{1}_{,1})\\
&&+\sum_{(i,j)\neq (1,2)}A^j_2J^{-1}\psi^{1}_{,i}w^i_{,j}-\sum_{(i,j)\neq (2,1)}A^j_1J^{-1}\psi^{2}_{,i}w^i_{,j}\\
&&-A^j_1(J^{-1}\psi^{2}_{,i})_{,j}w^i+A^j_2(J^{-1}\psi^{1}_{,i})_{,j}w^i.
\end{eqnarray*}
Using $1-A^1_1J^{-1}\psi^{2}_{,2}=0,$ $1-A^2_2J^{-1}\psi^{1}_{,1}=\delta\psi_{,2}(2+\delta\psi_{,2})/(1+\delta\psi_{,2})^2,$ $A^1_2=-\psi^{1}_{,2}=0$ we further simplify
\begin{eqnarray*}
w^2_{,1}-w^1_{,2}&=&w^1_{,2}\frac{\delta\psi_{,2}(2+\delta\psi_{,2})}{(1+\delta\psi_{,2})^2}-\frac{\delta\psi_{,1}}{1+\delta\psi_{,2}}w^1_{,1}+\frac{\delta\psi_{,1}}{1+\delta\psi_{,2}}w^2_{,2}-\left(\frac{\delta\psi_{,1}}{1+\delta\psi_{,2}}\right)^2w^1_{,2}\\
&&-A^j_1J^{-1}_{,j}\psi^{2}_{,i}w^i-A^j_1J^{-1}\delta\psi_{,ij}w^i-\frac{\delta\psi_{,22}}{(1+\delta\psi_{,2})^3}w^1\\
&=&w^1_{,2}\frac{\delta\psi_{,2}(2+\delta\psi_{,2})}{(1+\delta\psi_{,2})^2}-\frac{2\delta\psi_{,1}}{1+\delta\psi_{,2}}w^1_{,1}-\left(\frac{\delta\psi_{,1}}{1+\delta\psi_{,2}}\right)^2w^1_{,2}\\
&&+2\frac{\delta\psi_{,1}\delta\psi_{,12}w^1}{(1+\delta\psi_{,2})^2}-\frac{\delta\psi_{,11}w^1}{1+\delta\psi_{,2}}-\frac{\delta\psi_{,22}(1+(\delta\psi_{,1})^2)}{(1+\delta\psi_{,2})^3}w^1.
\end{eqnarray*}
Due to Proposition \ref{H0.5_fg}, we find that
\begin{eqnarray*}
\|J^3 \curl w\|_{1,\pm}&\leq& C\|w\|_{2,\pm}\|\nabla\delta\psi\|_{1.25,\pm}(1+\|\nabla\delta\psi\|_{1.25,\pm})^2\\
&&+C\|w\|_{2,\pm}\|\nabla\delta\psi\|_{1.25,\pm}^2(1+\|\nabla\delta\psi\|_{1.25,\pm})\\
&&+C\|w\|_{1.25,\pm}\|\nabla\delta\psi\|_{2,\pm}\|\nabla\delta\psi\|_{1.25,\pm}(1+\|\nabla\delta\psi\|_{1.25,\pm})\\
&&+C\|w\|_{1.25,\pm}\|\nabla\delta\psi\|_{2,\pm}(1+\|\nabla\delta\psi\|_{1.25,\pm})^2\\
&\leq&C\|w\|_{2,\pm}\|\nabla\delta\psi\|_{1.25,\pm}(1+\|\nabla\delta\psi\|_{1.25,\pm})^2\\
&&+C\|w\|_{1.25,\pm}\|\nabla\delta\psi\|_{2,\pm}(1+\|\nabla\delta\psi\|_{1.25,\pm})^2.
\end{eqnarray*}
From the smallness condition \eqref{H1.75}, we have that
$$
{\frac{1}{2}}\|\curl w\|_{0,\pm}\leq \|J^3\curl w\|_{0,\pm},$$
$$
{\frac{1}{2}}\|\nabla \curl w\|_{0,\pm}\leq \|J^3 \nabla \curl w\|_{0,\pm}\leq\|\nabla(J^3\curl w)\|_{0,\pm}+\|\nabla J^3 \curl w\|_{0,\pm}.\,,
$$
and from the  Sobolev embedding theorem,
\begin{eqnarray*}
\|\nabla J^3 \curl w\|_{0,\pm}^2&\leq& C(1+\|\nabla\delta\psi\|_{1.25,\pm})^4\int_\bbR (\nabla\delta\psi_{,2})^2(\curl w)^2dx\\
&\leq& C(1+\|\nabla\delta\psi\|_{1.25,\pm})^4\|\nabla\nabla\delta\psi\|_{L^4}^2\|\curl w\|_{L^4}^2\\
&\leq& C(1+\|\nabla\delta\psi\|_{1.25,\pm})^4\|\nabla\delta\psi\|_{1.5,\pm}^2\|w\|_{1.5,\pm}^2.
\end{eqnarray*}
We conclude that
\begin{eqnarray*}
\|\curl w\|_{1,\pm}&\leq& C\|w\|_{2,\pm}\|\nabla\delta\psi\|_{1.25,\pm}(1+\|\nabla\delta\psi\|_{1.25,\pm})^2\\
&&+C\|w\|_{1.25,\pm}\|\nabla\delta\psi\|_{2,\pm}(1+\|\nabla\delta\psi\|_{1.25,\pm})^2\\
&&+C(1+\|\nabla\delta\psi\|_{1.25,\pm})^4\|\nabla\delta\psi\|_{1.5,\pm}^2\|w\|_{1.5,\pm}^2\\
&\leq& C\|w\|_{2,\pm}|h^{\kappa\kappa}|_{1.75}+C|h^{\kappa\kappa}|_{2.5}|h^{\kappa\kappa}|_{1.75}+C|h^{\kappa\kappa}|_{2}\|w\|_{1.5,\pm}^2.
\end{eqnarray*}

Using Proposition \ref{Hodge}, we get
$$
\|w\|_{2,\pm} \leq C \Big[\|w\|_{0,\pm} + \|\curl w\|_{1,\pm} + \|\div w\|_{1,\pm} + |w\cdot e_2|_{1.5}\Big],
$$
and, using \eqref{w''}, we get
\begin{eqnarray}\label{winL2H2}
\int_0^t\|w\|_{2,\pm}^2&\leq & C\left( \frac{-\jump{\rho}}{2}|h_0|_2^2+C\sigma E(t)+tC(1+E(t)+(E(t))^2)E(t)\right).
\end{eqnarray}
\subsubsection{A polynomial-type inequality for the energy function $E(t)$} Notice that
$$
|h^{\kappa\kappa}(t)|_{2}\leq |h^{\kappa}(t)|_{2}.
$$
Furthermore, as $h^{\kappa\kappa}\in L^2(0,T_\kappa;H^{2.5}(\bbR))$ and 
$$
|h^{\kappa\kappa}_t|_{1.5}\leq |h_{\kappa t}|_{1.5}= |w^2|_{1.5}\leq C\|w\|_{2,\pm},
$$ 
we have $h^{\kappa\kappa}_t\in L^2(0,T_\kappa;H^{1.5}(\bbR))$. Consequently $h^{\kappa\kappa}\in C(0,T_\kappa;H^2(\bbR))$ and $E(t)$ is a continuous function. Collecting the previous estimates \eqref{w''} and \eqref{winL2H2} yields
\begin{eqnarray}\label{polinomine}
E(t)&\leq & \mathcal{C}\left( \frac{-\jump{\rho}}{2}|h_0|_2^2+\sigma E(t)+t(1+E(t)+(E(t))^2)E(t)\right).
\end{eqnarray}

\subsubsection{The uniform-in-$\kappa$ time} Recall that we assume that $T_\kappa$ is small enough to guarantee that $E(t)\leq z^*$ for $z^*>0$ a constant (depending on the size of the initial data) that will be chosen below. We set
$$
\sigma = \frac{1}{2\mathcal{C}},
$$
where $\mathcal{C}$ is the constant appearing in \eqref{polinomine}. We note that $\mathcal{C}$ is a constant depending only on the constants from the
Sobolev embedding theorem  and the elliptic estimate (\ref{Hodge}).  We can simplify \eqref{polinomine} to find that
$$
E(t)\leq  2C|h_0|_2^2+t\mathcal{P}(E(t)).
$$
This inequality implies that there exists a uniform-in-$\kappa$ time, $T^*_2(z^*,|h_0|_2)$, such that 
$$
E(t)\leq z^*\;\forall t\leq \bar{T}_\kappa=\min\{T^*_1(|h_0|_1.75),T^*_2(z^*,|h_0|_2),T_\kappa\} \,;
$$
see Section 9 of \cite{CoSh2006} for a proof.
We set $z^*=4C|h_0|_2^2$, and recalling \eqref{T1}, we  define 
$$
T^*=\min\{T_1^*,T_2^*\},\,\tilde{T}_\kappa=\min\{T^*(|h_0|_2,|h_0|_{1.75}),T_\kappa\}.
$$
As a consequence, we have the bounds
$$
E(t)\leq 4C|h_0|_2^2,\,|h^{\kappa\kappa}(t)|_{1.75}< \sigma,\,\forall t\leq \tilde{T}_\kappa.
$$
Our goal now is to show that we can reach $t=T^*$. To do so, we argue by contradiction. 
First, we assume that $\tilde{T}_\kappa=T^*$. Then we have a uniform-in-$\kappa$ lifespan, and a bound for every approximate solution. 
As a consequence, we can pass to the limit in $\kappa$. On the other hand, if $\tilde{T}_\kappa=T_\kappa$, we can extend the solution up to $\tilde{T}_\kappa+\delta$, for a small enough $\delta=\delta(z^*)$. Moreover, this extended solution verifies 
$$
E(t)\leq 4C|h_0|_2^2,\,|h^{\kappa\kappa}(t)|_{1.75}< \sigma,\,\forall 0\leq t\leq T_\kappa+\delta,\;\;\forall\kappa.
$$
By induction, we can reach $T^*$. This concludes the existence portion of Theorem \ref{localsmall}.

\subsection{Passing to the limit as $ \kappa \to 0$}
Once we have the uniform bound
$$
\max_{0\leq s\leq t}\{|h^{\kappa}(s)|_2^2\}+\int_0^t|h^{\kappa\kappa}(s)|_{2.5}^2+\|w(s)\|_{2,\pm}^2ds\leq C,
$$
we obtain the existence of weak limits
$$
h\in L^\infty(0,T^*;H^2(\bbR))\cap L^2(0,T^*;H^{2.5}(\bbR)),
$$
$$
h_t\in L^\infty(0,T^*;H^1(\bbR))\cap L^2(0,T^*;H^{1.5}(\bbR)),
$$
$$
w\in L^\infty(0,T^*;H^{1.5}(\bbR^2_\pm))\cap L^2(0,T^*;H^{2}(\bbR^2_\pm)),
$$
$$
\nabla Q\in L^\infty(0,T^*;H^{1.5}(\bbR^2_\pm)).
$$
Using the Rellich-Kondrachov  compactness theorem, we can prove that $(h,w,Q)$ is a distributional solution to \eqref{HS_semiALE1}.

\subsection{The uniqueness of the solution}
To prove  uniqueness of  solutions,  we use the energy method.  We assume that there exists two solutions, $h_1$ and $h_2$, corresponding to the 
same initial data $h_0$. 
Furthermore, we have that the corresponding higher-order energy functions $E_1(t)$ and $E_2(t)$,  defined in  \eqref{energy},  are uniformly bounded:
$$
E_1(t)+E_2(t)\leq 2z^*,\;\;\forall 0\leq t\leq T^*.
$$
We consider the new higher-order energy function
$$
\overline{E}(t)=\max_{0\leq s\leq t}\{|\overline{h}(s)|_2^2\}+\int_0^t\|\overline{w}(s)\|_{2,\pm}^2ds,
$$
where we denote the difference of both solutions using a bar:
$$
\overline{h}=h_1-h_2,\,\overline{\delta\psi}=\delta\psi_1-\delta\psi_2  \text{ and }\overline{w}=w_1-w_2.
$$
We have that
$$
\overline{E}(t)\leq E_1(t)+E_2(t)\leq 2z^*,\;\;\forall 0\leq t\leq T^*.
$$
The difference verifies the following system
\begin{subequations}\label{HS_semiALE1diff}
\begin{alignat}{2}
\mu\bar{w} + \nabla (\bar{Q} + \rho \bar{\delta\psi}) &= \left(\text{Id}-\frac{(\nabla\psi_1)^T\nabla\psi_1 }{J_1}\right)\mu w_1\nonumber\\
&\quad-\left(\text{Id}-\frac{(\nabla\psi_2)^T\nabla\psi_2 }{J_2}\right)\mu w_2  \qquad&&\text{in}\quad \{x_2\neq 0\}\,,\\
\div \bar{w} &= 0 &&\text{in}\quad \{x_2\neq 0\}\,,\\
\jump{\bar{w}^2} = \jump{\bar{Q}} &= 0 &&\text{on}\quad \{x_2 = 0\}\,,\\
\Delta \bar{\delta \psi}^\pm &= 0 &&\text{in}\quad \bbR^2_\pm\,,\\
\bar{\delta\psi}^\pm &= \bar{h} \qquad&&\text{on}\quad \{x_2 = 0\}\,,\\
\bar{h}_t &= \bar{w} \cdot e_2 \qquad &&\text{on}\quad \{x_2 = 0\}\,,\\
\bar{h} &= 0 &&\text{on}\quad \bbR \times \{t=0\}\,.
\end{alignat}
\end{subequations}
Recalling the equation for the evolution of the interface, we have that
\begin{equation}\label{energyuniqueness6}
|\bar{h}(t)|_{1.5}\leq \sqrt{t}C\sqrt{\bar{E}(t)}\leq \sqrt{t}C\sqrt{2z^*},\;|\bar{h}(t)|_{1.75}\leq C\sqrt[4]{t}\sqrt{\bar{E}(t)}\leq C\sqrt[4]{t}\sqrt{2z^*},
\end{equation}
$$
\mu^{-1}\Delta \bar{Q} = \mu^{-1}\div \big[ (\id - J_1 A_1 A^\rT_1) \nabla (Q_1 + \rho \delta\psi_1) -(\id - J_2 A_2 A^\rT_2) \nabla (Q_2 + \rho \delta\psi_2) \big] 
$$
with jump conditions $\jump{\bar{Q}} = 0$ and
\begin{eqnarray*}
\bigjump{\mu^{-1}\smallexp{$\displaystyle{} \frac{\p \bar{Q}}{\p \rN}$}} &=& \jump{\mu^{-1}(\id - J_1 A_1 A^\rT_1) (\nabla Q_1) e_2} - \jump{\mu^{-1}\rho J_1 (A_1)^2_i (A_1)^j_i \delta \psi_{1,_j}}\\
&&-\jump{\mu^{-1}(\id - J_2 A_2 A^\rT_2) (\nabla Q_2) e_2} + \jump{\mu^{-1}\rho J_2 (A_2)^2_i (A_2)^j_i \delta \psi_{2,_j}}. 
\end{eqnarray*}
Using that
$$
\text{Id}-JAA^T = \left[\begin{array}{cc}
\delta\psi_{,2} & - \delta\psi_{,1} \\
- \delta\psi_{,1} &  \frac{\delta\psi_{,1}^2}{1+\delta\psi_{,2}}+\delta\psi_{,2}
\end{array}
\right]\,,
$$
elliptic estimates show that
\begin{eqnarray*}
\|\nabla \bar{Q}\|_{1.25,\pm}&\leq& C\left[\right.\|\text{Id}-J_1A_1A_1^T\|_{1.25,\pm}\left(\|\nabla \bar{Q}\|_{1.25,\pm}+\|\nabla \bar{\delta\psi}\|_{1.25,\pm}\right)\\
&&+\|\nabla(Q_2+\rho\delta\psi_2)\|_{1.25,\pm}\|\nabla \bar{\delta\psi}\|_{1.25,\pm}\\
&&+\|\bar{JA}\|_{1.25,\pm}\|\nabla \delta\psi_1\|_{1.25,\pm}(\|A_1-\text{Id}\|_{1.25,\pm}+1)\\
&&+(\|J_2A_2-\text{Id}\|_{1.25,\pm}+1)\|\nabla\bar{\delta\psi}\|_{1.25,\pm}(\|A_1-\text{Id}\|_{1.25,\pm}+1)\\
&&+(\|J_2A_2-\text{Id}\|_{1.25,\pm}+1)\|\nabla \delta\psi_2\|_{1.25,\pm}\|\bar{A}\|_{1.25,\pm}\left.\right]\\
&\leq&C|h_1|_{1.75}\left(\|\nabla \bar{Q}\|_{1.25,\pm}+|\bar{h}|_{1.75}\right)+C|h_2|_{1.75}|\bar{h}|_{1.75}\\
&&+C|\bar{h}|_{1.75}|h_1|_{1.75}(|h_1|_{1.75}+1)+C(|h_2|_{1.75}+1)|\bar{h}\|_{1.75}(|h_1|_{1.75}+1)\\
&&+C(|h_2|_{1.75}+1)|h_2|_{1.75}|\bar{h}|_{1.75},
\end{eqnarray*}
and, using the smallness condition \eqref{H1.75},
$$
\|\nabla \bar{Q}\|_{1.25,\pm}\leq C|\bar{h}|_{1.75}.
$$
Similarly, we find that
\begin{eqnarray*}
\|\nabla \bar{Q}\|_{1.5,\pm}&\leq& C\left[\|\text{Id}-J_1A_1A_1^T\|_{1.5,\pm}\left(\|\nabla \bar{Q}\|_{1.25,\pm}+\|\nabla \bar{\delta\psi}\|_{1.25,\pm}\right)\right.\\
&&+\|\nabla(Q_2+\rho\delta\psi_2)\|_{1.25,\pm}\|\nabla \bar{\delta\psi}\|_{1.5,\pm}\\
&&+\|\nabla(Q_2+\rho\delta\psi_2)\|_{1.5,\pm}\|\nabla \bar{\delta\psi}\|_{1.25,\pm}\\
&&+\|\bar{JA}\|_{1.5,\pm}\|\nabla \delta\psi_1\|_{1.25,\pm}(\|A_1-\text{Id}\|_{1.25,\pm}+1)\\
&&+(\|J_2A_2-\text{Id}\|_{1.25,\pm}+1)\|\nabla\bar{\delta\psi}\|_{1.5,\pm}(\|A_1-\text{Id}\|_{1.25,\pm}+1)\\
&&+(\|J_2A_2-\text{Id}\|_{1.25,\pm}+1)\|\nabla \delta\psi_2\|_{1.25,\pm}\|\bar{A}\|_{1.5,\pm}\left.\right]\\
&\leq&C(|h_1|_{2}+|h_2|_{2})|\bar{h}|_{1.75}+C(|h_1|_{1.75}+|h_2|_{1.75}+1)|\bar{h}|_{2}\,,
\end{eqnarray*}
so we conclude that
$$
\|\nabla \bar{Q}\|_{1.25,\pm}+\|\bar{w}\|_{1.25,\pm}\leq C\sqrt[4]{t}.
$$
\begin{equation}\label{energyuniqueness5}
\|\nabla \bar{Q}\|_{1.5,\pm}+\|\bar{w}\|_{1.5,\pm}\leq C\sqrt[4]{t}+C|\bar{h}|_2.
\end{equation}

Next, as we have that
$$
\bar{v}=\frac{\nabla\bar{\delta\psi}w_1}{J_1}+\frac{\nabla\psi_2\bar{w}}{J_2}+\nabla\psi_2w_1\frac{-\bar{J}}{J_1J_2},
$$
and
$$
|\bar{v}|_{1.5}\leq c\sigma\|\nabla\bar{\delta\psi}\|_{2,\pm}+C\|\bar{w}\|_{2,\pm}.\,,
$$
using \eqref{h_reg_eq}, we compute that
\begin{equation}\label{energyuniqueness4}
\int_0^t|\bar{h}(s)|^2_{2.5}ds\leq \mathcal{P}(\bar{E}(t)).
\end{equation}
Recalling \eqref{eq1} and for $i=1$ or $2$,  denoting the matrix $B_i$ by
$$
B_i=\left(\begin{array}{cc}\delta\psi_{i,2}-\delta\psi_{i,1}^2 & -\delta\psi_{i,1}(1+\delta\psi_{i,2})\\
-\delta\psi_{i,1}(1+\delta\psi_{i,2}) & \delta\psi_{i,2}(1+\delta\psi_{i,2})\end{array}\right),
$$ 
we write the right-hand side in (\ref{HS_semiALE1diff}a) as
$$
RHS=B_1\frac{\mu\bar{w}}{J_1}+B_1\frac{-\mu w_2\bar{J}}{J_2J_1}+(B_1-B_2)\frac{\mu w_2}{J_2}.
$$
Testing against $\bar{w}$ and integrating-by-parts in (\ref{HS_semiALE1diff}a), we get that
\begin{multline}\label{energyuniqueness3}
|\bar{h}(t)|_0^2+\min\{\mu^+,\mu^-\}\int_0^t\|\bar{w}(s)\|^2_{0,\pm}ds\\
\leq Cz^*\left[\int_0^t\|\bar{w}(s)\|_{0,\pm}^2ds+\int_0^t\|\bar{w}(s)\|_{0,\pm}\|\nabla\bar{\delta\psi}(s)\|_{0,\pm}ds\right]\leq Cz^*\mathcal{P}(\bar{E}(t))t.
\end{multline}
The energy estimates show that
\begin{equation}\label{energyuniqueness}
\min\{\mu^+,\mu^-\}\int_0^t\|\bar{w}''(s)\|_{0,\pm}^2-\frac{\jump{\rho}}{2}\frac{d}{dt}|\bar{h}(s)|_{2}^2 ds=\int_0^t\int_{\bbR^2}RHS''\bar{w}''dxds\leq C(\sqrt{t}+\sqrt[4]{t})\mathcal{P}(\bar{E}(t))\,,
\end{equation}
and once again using the Hodge decomposition, we find that
\begin{equation}\label{energyuniqueness2}
\int_0^t\|\bar{w}(s)\|_{2,\pm}^2ds\leq C(t+\sqrt{t}+\sqrt[4]{t})\mathcal{P}(\bar{E}(t)).
\end{equation}
Collecting the previous estimates \eqref{energyuniqueness6}-\eqref{energyuniqueness2} and using the smallness of $\sigma$, we get the following polynomial inequality
$$
\bar{E}(t)\leq (t+\sqrt{t}+\sqrt[4]{t})\mathcal{P}(\bar{E}(t)),
$$
which implies the uniqueness. This concludes the proof of Theorem \ref{localsmall} for a infinitely-deep domain.

\section{Proof of Theorem \ref{localsmall}: Local well-posedness for the confined case}\label{sec3}
We define our reference domains 
$$
\Omega^+=\{(x_1,x_2),\, x_1\in\bbR\, (\text{or }x_1\in\bbT),0<x_2<c_t\},
$$
$$
\Omega^-=\{(x_1,x_2),\, x_1\in\bbR\, (\text{or }x_1\in\bbT),c_b<x_2<0\},
$$
and 
the reference interface
$$
\Gamma=\{(x_1,x_2),\, x_1\in\bbR\, (\text{or }x_1\in\bbT),x_2=0\}.
$$
We denote by
$$
\Gamma_{bot}=\{(x_1,x_2),\,x_2=c_b\},\,
\Gamma_{top}=\{(x_1,x_2),\,x_2=c_t\},
$$
 the fixed {\it bottom} and {\it top} boundaries.

We consider $\delta\psi^\pm$ as the solution of
$$
\Delta\delta\psi^+=0,\, \delta\psi^+=h\text{ if } x_2\in\Gamma,\, \delta\psi^+=\tilde{t}(x)\text{ if } x_2\in \Gamma_{top}.
$$
and
$$
\Delta\delta\psi^-=0,\, \delta\psi^-=h\text{ if } x_2\in\Gamma,\, \delta\psi^-=\tilde{b}(x)\text{ if } x_2\in \Gamma_{bot}.
$$
We define the mapping $\psi^\pm=e+(0,\delta\psi^\pm)$. In particular, 
$$
\psi^\pm(\Gamma,t)=(x_1,h(x_1,t)),\,\psi^+(\Gamma_{top},t)=(x_1,t(x_1)),\,\psi^-(\Gamma_{bot},t)=(x_1,b(x_1)),
$$
so
$$
\psi:\Omega^\pm\mapsto\Omega^\pm(t).
$$
Using estimates similar to those in \eqref{diffeo}, $\psi$ is a diffeomorphism if $h,\tilde{t},\tilde{b}$ are small in the $H^{1.75}$ norm. We define $v=u\circ \psi$, $q=p\circ \psi$, $A=(\nabla\psi)^{-1}$, $J= \text{det}(\nabla\psi)$, $w^k=JA^k_iv^i$ and $Q=q+\rho x_2$.   We write
$$
n_b=\frac{(\tilde{b}'(x_1),-1)}{\sqrt{1+(\tilde{b}'(x))^2}}\text{ and }n_t=\frac{(\tilde{t}'(x_1),1)}{\sqrt{1+(\tilde{t}'(x))^2}}
$$ 
for the normal vectors at $t(x)$ and $b(x)$, respectively. Then, we have the boundary conditions
$$
u\cdot n_b=0\text{ at }(x_1,x_2)\in \{(x_1,b(x_1))\},u\cdot n_t=0\text{ at }(x_1,x_2)\in \{(x_1,t(x_1))\},
$$
which translate to
$$
v\cdot n_b=0\text{ at }\Gamma_{bot},v\cdot n_t=0\text{ at }\Gamma_{top}.
$$
Since $JA^Te_2=(-\psi^2_{,1},\psi^1_1)$, then
$$
JA^Te_2=(-\tilde{b}'(x_1),1)\text{ at }\Gamma_{bot},JA^Te_2=(-\tilde{t}'(x_1),1)\text{ at }\Gamma_{top}.
$$
Using this, we can write the following boundary conditions for the semi-ALE velocity
$$
v\cdot (-n_b)=v\cdot (JA^Te_2)=(JAv)\cdot e_2=w\cdot e_2=0\text{ at }\Gamma_{bot},
$$
$$
v\cdot n_t=v\cdot (JA^Te_2)=(JAv)\cdot e_2=w\cdot e_2=0\text{ at }\Gamma_{top}.
$$
As in Section \ref{sec2.2}, we obtain
\begin{subequations}\label{HS_semiALE1general}
\begin{alignat}{2}
\mu w + \nabla (Q + \rho \delta\psi) &= \left(\text{Id}-\frac{(\nabla\psi)^T\nabla\psi }{J}\right)\mu w  \qquad&&\text{in}\quad \{x_2\neq 0\}\,,\\
\div w &= 0 &&\text{in}\quad \{x_2\neq 0\}\,,\\
\jump{w^2} = \jump{Q} &= 0 &&\text{on}\quad \{x_2 = 0\}\,,\\
w^2 &= 0 &&\text{on}\quad \{x_2 = c_b,c_t\}\,,\\
\Delta \delta \psi^\pm &= 0 &&\text{in}\quad \Omega^{\pm}_\pm\,,\\
\delta\psi^\pm &= h \qquad&&\text{on}\quad \{x_2 = 0\}\,,\\
\delta\psi^+ &= \tilde{t} \qquad&&\text{on}\quad \{x_2 = c_t\}\,,\\
\delta\psi^- &= \tilde{b} \qquad&&\text{on}\quad \{x_2 = c_b\}\,,\\
h_t &= w \cdot e_2 \qquad &&\text{on}\quad \{x_2 = 0\}\,,\\
h &= h_0 &&\text{on}\quad \bbR \times \{t=0\}\,.
\end{alignat}
\end{subequations}
Multiplying (\ref{HS_semiALE1general}a) with $e_2$ and evaluating at $\Gamma_{top}$, we obtain that
$$
Q^2_{,2}=-\rho^+\delta\psi_{,2}-\mu^+ w_1\tilde{t}'\,,
$$
and similarly at $\Gamma_{bot}$, 
$$
Q^2_{,2}=-\rho^-\delta\psi_{,2}-\mu^- w_1\tilde{b}'.
$$

Given $|h_0|_{1.75}< \sigma\ll1$ and $|\tilde{t}|_{2},|\tilde{b}|_{2}\leq \tilde{\sigma}\ll1$, we can regularize the problem as in (\ref{HS_semiALE_reg_fix}a-h) and we get an approximate solution $(w_\kappa,Q_\kappa,h_\kappa)$ that exists up to time $T_\kappa>0$. This solution has a finite energy, $E(t)$, as defined in \eqref{energy}. We take $T_\kappa$ small enough so $E(t)\leq z^*$ (for a constant that will be chosen later). 

With the boundary conditions for $Q$, we can form the associated elliptic problem as in Section \ref{sec4.3} and we get the following
 bounds (analogous to \eqref{Q1.25}, \eqref{Q1.5}):
$$
\|\nabla Q\|_{1.25,\pm}\leq c|h|_{1.75}+c\|w\|_{1.25,\pm}(|\tilde{t}|_{1.75}+|\tilde{b}|_{1.75}).
$$
and
$$
\|\nabla Q\|_{1.5,\pm}\leq c|h|_{1.75}|h|_2+c\|w\|_{1.5,\pm}(|\tilde{t}|_2+|\tilde{b}|_2)+c|h|_2,
$$
In particular, using the elliptic estimates for the pressure $Q$, we have that
$$
\|w\|_{1.5,\pm}\leq C|h|_2,\,
\|w\|_{1.25,\pm}\leq C|h|_{1.75},
$$
where we have used the smallness of $\tilde{\sigma}$ to obtain the desired polynomial bounds. The bound $h\in L^2 ( 0,T_ \kappa ; H^{2.5} (\Gamma))$ is obtained in the same way as in the proof of Theorem \ref{localsmall}. Using the boundary condition $w_2^\pm=0$ and $x_2=c_t,c_b$, the new terms coming from the boundaries in the energy estimates vanishes and we obtain the inequality
$$
E(t)\leq C|h_0|_2^2+C\sigma_1 E(t) +t\mathcal{P}(E(t)),
$$
which, since $\sigma_1\ll1$, implies the existence of a uniform-in-$\kappa$ $T^*$ such that
$$
E(t)\leq z^*, |h(t)|_{1.75}<\sigma_1\,\forall\,0\leq t\leq \min\{T_\kappa,T^*\}.
$$
We reach $T^*$ by induction. The uniqueness is obtained in the same way. 
%
This proves Theorem \ref{localsmall}.

\section{Proof of Theorem \ref{globalsmall}: Global existence and decay to equilibrium}\label{sec4}
Recall that in this case we have $\Omega^+(t)\cup\Omega^-(t)=\bbT\times \bbR$.
\subsection{A linearization of (\ref{HS_semiALE})}\label{sec4.1.1}
We denote by $\hat{f}$ the Fourier series of $f$. We write $\Lambda$ for the square root of the Laplacian:
$$
\Lambda f=\sqrt{-\partial_x^2}f,\qquad \widehat{\Lambda f}(\xi)=|\xi|\hat{f}(\xi).
$$
It is well-known that the previous operator has a kernel representation
\begin{equation*}
\Lambda f(x_1) = \frac{1}{2\pi}\, \text{p.v.} \int_{-\pi}^\pi \frac{f(x_1)-f(x_1-s)}{\sin^2\left(\frac{s}{2}\right)}\, d s,
\end{equation*}
From \eqref{deltapsi} and $\delta \psi^{-}(x_1,x_2)=\delta \psi^{+}(x_1,-x_2)$, we have that
\begin{equation*}
\delta \psi^{\pm},_2 = \mp\Lambda h \qquad \text{on}\quad \{x_2 = 0\}\,,
\end{equation*}
so that the Dirichlet-to-Neumann map is the Zygmund operator. 

We define the Neumann-to-Dirichlet map $\Lambda^{-1}$ by
$$
\widehat{\Lambda^{-1} f}(\xi)=|\xi|^{-1}\hat{f}(\xi).
$$
Notice that if $f$ has zero mean,  the previous operator is well-defined.

Equation (\ref{HS_semiALE1}a) may be written as
$$
\mu w + \nabla (Q + \rho \delta\psi) = F
$$
with 
$$
F=(F_1,F_2)=\left(\text{Id}-\frac{(\nabla\psi)^T\nabla\psi }{J}\right)\mu w.
$$
By taking the inner product of this equation with $e_2$, and then evaluating on $\{x_2 = 0\}$, we find that
\begin{equation}\label{eqlinear}
\mu^\pm h_t + Q^{\pm}_{,2}+\rho^\pm\delta\psi^{\pm}_{,2}=F_2^\pm,
\end{equation}
where
$$
F_2^\pm=-(\mu^\pm w_1^\pm h'+\mu^\pm w_2(\mp\Lambda h))\,.
$$
Summing over the two phases, 
$$
F_2^++F_2^-=-(\mu^+ w_1^++\mu^-w^-_1) h'-\jump{\mu}\Lambda h w_2.
$$

On the other hand, taking the divergence of the equation (\ref{HS_semiALE1}a), we get
$$
\Delta Q=\text{div}F.
$$
The continuity of $q$ gives us the jump condition $\jump{Q}=0$. Using equation (\ref{HS_semiALE1}a) and \eqref{eq1}, 
$$
\jump{Q_{,2}}=\left(\rho^++\rho^-\right)\Lambda h-\jump{\mu}h_t+\jump{F_2},
\text{ with }
\jump{F_2}=-(\jump{\mu w_1} h'-(\mu^++\mu^-)w_2 \Lambda h).
$$
We define $\bar{Q}^\pm$ such that
\begin{eqnarray*}
\Delta \bar{Q}^\pm &=& 0 \text{ in }\quad \bbR^2_\pm \,,\\
\bar{Q}^\pm &=& -\frac{\rho^++\rho^-}{2}h+\frac{\jump{\mu}}{2}\Lambda^{-1}h_t \qquad\text{ on }\quad \{x_2 = 0\}\,.
\end{eqnarray*}
Then,
$$
\bar{Q}^\pm_{,2}=\pm\frac{\rho^++\rho^-}{2}\Lambda h\mp\frac{\jump{\mu}}{2}h_t,\text{ on } \{x_2=0\}.
$$

Consequently,  $\jump{\bar{Q}}=0$ and $\jump{\bar{Q}^\pm_{,2}}=(\rho^++\rho^-)\Lambda h-\jump{\mu}h_t$. Setting $\tilde{Q}=Q-\bar{Q}$,
then $\tilde{Q}$ is a solution of
\begin{equation}\label{Qtilda}
\Delta \tilde{Q}=\text{div}F,
\end{equation}
with the jump conditions  
\begin{equation}\label{Qtilda2}
\jump{\tilde{Q}}=0 \text{ and }\jump{\tilde{Q}_{,2}}=\jump{F_2}.
\end{equation}
As a consequence, equation \eqref{eqlinear} becomes
\begin{equation*}
\mu^\pm h_t \mp\frac{\jump{\mu}}{2}h_t \mp\rho^\pm\Lambda h\pm\frac{\rho^++\rho^-}{2}\Lambda h=F_2^\pm-\tilde{Q}^\pm_{,2},
\end{equation*}
Summing the equations for both phases, we obtain
\begin{equation}\label{eqlineal2b}
\frac{\mu^++\mu^-}{2}h_t =\frac{\jump{\rho}}{2}\Lambda h+\frac{F^+_2+F^-_2-\tilde{Q}_{,2}^+-\tilde{Q}_{,2}^-}{2}.
\end{equation}

\subsection{Energy estimates for the total norm}  For notational simplicity, we set $\jump{\rho}=-2$ and $\mu^++\mu^-=2$, but in what follows, any finite
values are permissible.   Using the Duhamel Principle on  \eqref{eqlineal2b}, we write the so-called mild solution as
\begin{equation}\label{mild}
h(t) = h_0e^{-\Lambda t}+\int_0^t\left(\frac{F^+_2(s)+F^-_2(s)-\tilde{Q}_{,2}^+(s)-\tilde{Q}_{,2}^-(s)}{2}\right)e^{-\Lambda (t-s)}ds\,.
\end{equation}
Note, that in this analysis, we are restricting our attention to zero mean, periodic functions.
As to the linear semi-group, it is well-known that 
\begin{equation}\label{linear-decay}
\|e^{-\Lambda t}\|_{L^2\mapsto L^2}\leq e^{-t}\,,
\end{equation} 
since the first eigenvalue of $\Lambda$ agrees with the first eigenvalue of $- \Delta $.

Let $\sigma_2$ denote a constant that will be fixed later.   We choose  $h_0\in H^2$  such that $|h_0|_2\leq \sigma_2\ll1$. 
Using Theorem \ref{localsmall}, there exists a local in time solution up to time $T=T(h_0)$. Moreover, this solution remains in the Rayleigh-Taylor stable regime and satisfies
$$
\max_{0\leq t\leq T}|h(t)|^2_{2}+\int_0^t |h(s)|_{2.5}^2ds\leq C_1|h_0|_2^2,
$$
and
\begin{equation}\label{smallglobal}
\max_{0\leq t\leq T}|h(t)|_{1.75}<\sigma_{0.25}\ll1,
\end{equation}
where $C_1$ and $\sigma_{0.25}$ are the constants appearing in Theorem \ref{localsmall}. We define the new {\it total norm} as
\begin{equation}\label{totalenergy}
\vertiii{(w,h)}^2_T=\max_{0\leq t\leq T}\left\{|h(t)|_2^2+e^{\alpha t}|h(t)|_0^2+\int_0^t\|w(s)\|_{2,\pm}^2ds\right\},
\end{equation}
for a given $0<\alpha<2$. Hence, a uniform bound for $\vertiii{(w,h)}_T$ for every $t>0$ implies the $e^{-\alpha t/2}$ decay-rate for $|h(t)|_0$. 

Just as we obtained the $H^{2.5}$ estimate for $h^{ \kappa \kappa }$ in (\ref{h2.50}), we have the following estimate:
\begin{eqnarray}\label{h6.2}
|h^{\prime\prime}|_{0.5} &\leq& C (1+\left|h\right|^3_{1.75}) \left(\left|v^+\right|_{1.5}+\left|v^-\right|_{1.5}\right)+C |h^{\prime\prime}|_{0.5} |v\cdot (-h^{\prime},1)|_{0.75} \,.
\end{eqnarray}
Using the estimates 
\eqref{w1.25}, \eqref{smallv}, \eqref{h2.51}-\eqref{h2.53}) together with \eqref{smallglobal}, we obtain that
\begin{align*}
\int_0^t|h(s)|_{2.5}^2ds & \leq C\left(\int_0^t\|w(s)\|^2_{2,\pm}ds+\int_0^t|h|_{2}^4ds+\int_0^t\left|h(s)\right|_{2}^2\|w(s)\|^2_{1.5,\pm}ds\right).
\end{align*}
Using the interpolation inequality
$|h|^2_2\leq C|h|_{1.5}|h|_{2.5}$, 
together with \eqref{smallglobal}, we find that
\begin{align*}
\int_0^t|h(s)|_{2.5}^2ds & \leq C\vertiii{(w,h)}^2\left(1+\vertiii{(w,h)}^2\right).
\end{align*}

Our goal is to show that $e^{\alpha t}|h(t)|_0^2$ remains small for all time.  To do so, we
take the $L^2(\Gamma)$-norm of equation \eqref{mild}, and find that
\begin{equation}\label{decay-inequality}
|h(t)|_0\leq e^{-t}|h_0|_0+\frac{1}{2}\int_0^t|F^+_2(s)+F^-_2(s)-\tilde{Q}_{,2}^+(s)-\tilde{Q}_{,2}^-(s)|_0e^{-(t-s)}ds.
\end{equation} 
We define
\begin{equation}\label{eqI1}
I_1=\frac{1}{2}\int_0^t|F^+_2(s)+F^-_2(s)|_0e^{-(t-s)}ds,
\end{equation}
\begin{equation}\label{eqI2}
I_2=\frac{1}{2}\int_0^t|\tilde{Q}_{,2}^+(s)+\tilde{Q}_{,2}^-(s)|_0e^{-(t-s)}ds,
\end{equation}
We are going to use the linear decay rate (\ref{linear-decay}) to establish the nonlinear decay rate for small solutions.   This will amount to
establishing certain integrability properties of the nonlinear term (\ref{decay-inequality}).

Notice now that, using \eqref{Qtilda} and \eqref{Qtilda2}, we have the bound
$$
\|\nabla\tilde{Q}\|_{0,\pm}\leq C\|F\|_{0,\pm}.
$$
Given $\phi\in H^1(\bbR^2)$, we compute
$$
\int_{\{x_2=0\}} \tilde{Q}_{,2}\phi dx_1=\int_{\Omega^\pm}\nabla \tilde{Q}\nabla \phi dx-\int_{\Omega^\pm} F\nabla\phi+\int_{\{x_2=0\}}F\cdot N\phi dx_1,
$$
so
$$
|\tilde{Q}_{,2}|_{-0.5}\leq C(\|F\|_{0,\pm}+|F_2|_{-0.5}).
$$
By elliptic estimates and the trace theorem,
$$
|\tilde{Q}_{,2}|_{0.5}\leq C(\|F\|_{1,\pm}+|F_2|_{0.5}).
$$
Thus, using interpolation,
$$
|\tilde{Q}_{,2}|_{0}\leq C(\|F\|_{0.5,\pm}+|F_2|_{0}).
$$

Using the H\"{o}lder inequality and the boundedness of the Hilbert transform in $L^p$ for $1<p<\infty$, we have that
$$
|F^+_2(s)+F^-_2(s)|_0\leq C|w|_{L^4}|h'|_{L^4}.
$$
Due to the Sobolev embedding theorem, the trace theorem and elliptic estimates, we have that
$$
|w|_{L^4}|h'|_{L^4}\leq C|w|_{0.25}|h|_{1.25}\leq C\|w\|_{0.75,\pm}|h|_{1.25}\leq C|h|^2_{1.25}.
$$
In particular, 
$$
|F^+_2(s)+F^-_2(s)|_0\leq C|h|_{0}|h|_{2.5}.
$$
Using \eqref{eqI1}, we find that
\begin{eqnarray}\label{decay1}
I_1&\leq& C\vertiii{(w,h)}_T^{0.5}\int_0^t(e^{\alpha s})^{-0.5}|h(s)|_{2.5}e^{-(t-s)}ds\nonumber\\
&\leq& C(1+\vertiii{(w,h)}_T^2)^{0.5}\vertiii{(w,h)}_T^{1.5}e^{-t}\left(\int_0^te^{(2-\alpha)s}ds\right)^{0.5}\nonumber\\
&\leq& \frac{C}{\sqrt{2-\alpha}}(1+\vertiii{(w,h)}_T^2)^{0.5}\vertiii{(w,h)}_T^{1.5}e^{-t}\left(e^{(2-\alpha)t}-1\right)^{0.5}.
\end{eqnarray}
The remaining terms \eqref{eqI2} are written as
$$
I_2\leq\frac{1}{2}\int_0^t(\|F\|_{0.5,\pm}+|F^+_2(s)|+|F^-_2(s)|_0)e^{-(t-s)}ds
$$
The terms with $|F^+_2(s)|+|F^-_2(s)|_0$ are similar to those with $|F^+_2(s)+F^-_2(s)|_0$. 
Using \eqref{eq1} and elliptic estimates, we have that
\begin{align}
\|F\|_{0,\pm} & \leq C\|w\|_{L^4}\|\nabla\delta\psi\|_{L^4}\nonumber\\
& \leq C\|w\|_{0.5,\pm}\|\nabla\delta\psi\|_{0.5,\pm}\nonumber\\
& \leq C|h|_{1}|h|_{1},\label{interF}
\end{align}
and, using \eqref{w1.25},
\begin{align}
\|\nabla F\|_{0,\pm} & \leq C\left(\|\nabla w\|_{L^4}\|\nabla\delta\psi\|_{L^4}+\|w\|_{L^4}\|\nabla^2\delta\psi\|_{L^4}\right)\nonumber\\
& \leq C\left(\|w\|_{1.5,\pm}\|\nabla \delta\psi\|_{0.5,\pm}+\|w\|_{0.5,\pm}\|\nabla\delta\psi\|_{1.5,\pm}\right)\nonumber\\
& \leq C|h|_1|h|_2.\label{interF2}
\end{align}
Due to linear interpolation between \eqref{interF} and \eqref{interF2}, we have
\begin{equation}
\|F\|_{0.5,\pm}  \leq C|h|_{1}|h|_{1.5} \leq C|h|_{0}|h|_{2.5}.\label{decay2}
\end{equation}
Collecting the estimates \eqref{decay1} and \eqref{decay2}, 
$$
\frac{1}{2}\int_0^t|F^+_2(s)+F^-_2(s)-\tilde{Q}_{,2}^+(s)-\tilde{Q}_{,2}^-(s)|_0e^{-(t-s)}ds
\leq (1+\vertiii{(w,h)}_T^2)^{0.5}\vertiii{(w,h)}_T^{1.5}e^{-\alpha t/2},
$$
and
\begin{align*}
e^{\alpha t}|h(t)|^2_0 & \leq 2\left(e^{(\alpha-2)t}|h_0|^2_0 +C\left(1+\vertiii{(w,h)}_T^2\right)\vertiii{(w,h)}_T^{3}\right)\\
& \leq 2|h_0|_0^2+\left(1+\vertiii{(w,h)}_T^2\right)\vertiii{(w,h)}_T^{3}.
\end{align*}
Now we have to estimate the terms 
$$
\max_{0\leq t\leq T} |h(t)|_2^2+\int_0^t\|w(s)\|^2_{2,\pm}ds.
$$
Using the same type of estimates as in Sections \ref{sec4.3},\ref{sec4.5} and \ref{sec4.6},
we get the inequality
$$
\vertiii{(w,h)}_T\leq C_2|h_0|_2+\mathcal{P}(\vertiii{(w,h)}_T),
$$
where the polynomial $\mathcal{P}$ has order $m$ with $m > 1$. Now, by choosing the initial data to be sufficiently small, we have a global bound 
$$
\vertiii{(w,h)}_T\leq 2C_2|h_0|_2\leq 2C_2\sigma_2.
$$
Furthermore, using interpolation between Sobolev spaces, we have
$$
\sup_{0\leq t\leq T}|h(t)|_{1.75}^2\leq 2C_2\sigma_2e^{-\frac{\alpha t}{8}}.
$$
We take $\sigma_2$ small enough so that
$$
2C_2\sigma_2<\sigma_{0.25},
$$
and we obtain that the smallness of $|h|_{1.75}$ propagates. 

Consequently, at time $t=T$, the solution remains in the stable regime (see Section \ref{sec2.5.4}), and the condition \eqref{smallglobal} is, in fact, improved.
 Due to this fact, we can apply Theorem \ref{localsmall} to continue the solution up to $t=2T$. As the same estimates hold in the time interval $nT\leq t\leq (n+1)T$ for $n\in\bbZ^+$, we conclude the proof of Theorem \ref{globalsmall} by means of a classical continuation argument.

\section{Proof of Theorem \ref{localonephase}: Local well-posedness for the one-phase problem}\label{sec5}
We now focus our attention  on the one-phase Muskat problem (\ref{HS_Eulerian_Onephase}a-e).

\subsection{Constructing the family of  diffeomorphisms $\psi( \cdot ,t)$}\label{subsection_psi}
We define our reference domain,  fixed {\it bottom} boundary, and reference interface, respectively, as follows:
\begin{equation}\label{ref_domain}
\Omega=\mathbb{T}\times[c_b,0]\,, \quad \Gamma_{bot}=\{(x_1,c_b),x_1\in\bbT\}\,, \text{ and }\Gamma=\{(x_1,0),x_1\in\bbT\}.
\end{equation} 
In particular, our reference domain is $C^\infty$. We let $N=e_2$ denote
the unit normal vector on $\Gamma$.  Given a function $h\in C(0,T;H^2)$ with initial data $h(0)=h_0$, we fix $0<\delta\ll1$ and define
\begin{equation}\label{Omegadelta}
\Omega^\delta(0)=\{(x_1,x_2),\, x_1\in\bbT,\,c_b<x_2<\jdel h_0(x_1)\},
\end{equation}
\begin{equation}\label{Gammadelta}
\Gamma^\delta(0)=\{(x_1,\jdel h_0(x_1)),\, x_1\in\bbT\},
\end{equation}
and
\begin{equation}\label{phi1}
\phi_1(x_1,x_2)=\left(x_1,x_2+\jdel h_0(x_1)\left(1-\frac{x_2}{c_b}\right)\right).
\end{equation}
This function $\phi_1:\Omega\rightarrow\Omega^\delta(0)$ is a $C^\infty$ diffeomorphism.

Next, we define the function $\phi_2:\Omega^{\delta}(0)\rightarrow\Omega(0)$ as the solution to the following elliptic problem:
\begin{subequations}\label{phi2}
\begin{alignat}{2}
\Delta \phi_2&= 0  \qquad&&\text{in}\quad \Omega^\delta(0)\times[0,T]\,,\\
\phi_2 &= e+[h_0(x_1)-\jdel h_0(x_1)]e_2 \qquad &&\text{on}\quad \Gamma^\delta(0)\times[0,T]\,,\\
\phi_2 &= e &&\text{on}\quad \Gamma_{bot}\times[0,T] \,.
\end{alignat}
\end{subequations}
Since $\Omega^\delta(0)$ is a $C^ \infty $ domain, standard
elliptic regularity theory shows that $\phi_2 \in H^{2.5}(\Omega^\delta(0))$, and since
for $ \delta >0$ taken sufficiently small, $|h_0-\jdel h_0|_{2}\ll 1$, $\|\nabla \phi_2 - \text{Id} \|_{C^0} \ll 1$;
hence, from the inverse function theorem,
$\phi_2:\Omega^{\delta}(0)\rightarrow\Omega(0)$ is an $H^{2.5}$-class diffeomorphism. 

We define
\begin{equation}\label{psi0a}
\psi(0)=\phi_2\circ\phi_1:\Omega\rightarrow \Omega(0).
\end{equation}
This mapping is also a diffeomorphism that maps
$$
\psi(0):\Gamma\rightarrow \Gamma(0)
$$ 

Furthermore, using the chain rule, we have that
$$
\|\psi(0)\|_{2,-}\leq  c(\delta)|h_0|_{1.5},\,\,
\|\psi(0)\|_{3,-}\leq  c(\delta)|h_0|_{2.5}.
$$
Using interpolation, we obtain
\begin{equation}\label{psi02.5}
\|\psi(0)\|_{2.5,-}\leq c(\delta)|h_0|_{2}.
\end{equation}
(We note that $\delta>0$ is fixed number, so the dependence of the constant in (\ref{psi02.5}) on $\delta$ is harmless.)
We have thus defined  our initial diffeomorphism $\psi(0)$; we next define our time-dependent family of diffeomorphisms $\psi(t) = \psi( \cdot, t)$ as 
follows:
\begin{subequations}\label{psita}
\begin{alignat}{2}
\Delta \psi(t)&= \Delta \psi(0)  \qquad&&\text{in}\quad \Omega\times[0,T]\,,\\
\psi(t)&= e+h(x_1,t)e_2 \qquad &&\text{on}\quad \Gamma\times[0,T]\,,\\
\psi(t)&= e \qquad &&\text{on}\quad \Gamma_{bot}\times[0,T]\,.
\end{alignat}
\end{subequations}
Writing $J(t)=\text{det}(\nabla\psi(t))$, we have the bounds
\begin{equation}\label{J1.25}
\|J(t)-J(0)\|_{1.25,-}\leq C\|\psi(t)-\psi(0)\|^2_{2.25,-}\leq C|h(t)- h_0|_{1.75}^2. 
\end{equation}
Consequently, using $h\in C(0,T;H^2)$, for sufficiently small time $t$, we have
$$
\min_{x\in\Omega^-}\frac{J(0)}{2}< J(t)< 2\max_{x\in\Omega^-} J(0),
$$
and we once again see that $\psi(t): \Omega \to \Omega(t)$ is a diffeomorphism. 
Furthermore, $\psi(t)$ is a $H^{2.5}$-class diffeomorphism thanks to the elliptic estimate
$$
\|\psi(t)\|_{2.5,-}\leq  c(|h(t)|_{2}+1).
$$

\subsection{The ALE formulation}
With $\psi(t)= \psi( \cdot ,t)$ defined in Section \ref{subsection_psi} (see \eqref{psi0a} and \eqref{psita}),  we set $A=(\nabla\psi)^{-1}$ and $J =\det \nabla \psi$.
As we noted above,  $\psi(t,\Gamma)=\Gamma(t)$.   We define our ALE variables: $v=u\circ\psi,q=p\circ\psi$. 

We let 
$$
\tilde{\tau}=(1,h'(x_1,t)),\,\tilde{n}=(-h'(x_1,t),1),
$$
denote the (non-unitary) tangent and normal vectors, respectively,  to $\Gamma(t)$. 
We let $g= |\psi'|^2$ denote the induced metric,  and define the unit tangent vector $\tau=\tilde{\tau}/\sqrt{g}$ and the unit normal vector $n=\tilde{n}/\sqrt{g}$. Since the interface $\Gamma(t)$  moves with the fluid, 
$$
v\cdot \tilde{n}=\psi_t\cdot \tilde{n}=h_tN\cdot \tilde{n}=h_t.
$$
Hence, the ALE representation of the one-phase Muskat problem is given as
\begin{subequations}\label{HS_ALE_Onephase}
\begin{alignat}{2}
v^i+ A^k_i(q+\psi^2)_{,k}&=0 \qquad&&\text{in}\quad\Omega\times[0,T]\,,\\
A^i_jv^j_{,i} &= 0 &&\text{in}\quad\Omega\times[0,T]\,,\\
h(t)&=h_0+\int_0^t v^i \tilde{n}_ids\qquad&&\text{on}\quad\Gamma\times[0,T]\,,\\
q&=0 &&\text{on}\quad\Gamma\times[0,T]\,,\\
v\cdot e_2&=0 &&\text{on}\quad\Gamma_{bot}\times[0,T]\,.
\end{alignat}
\end{subequations}

\subsubsection{The matrix $A$}
From the identity $A\nabla\psi=\text{Id}$, we see that
\begin{equation}\label{propA}
A_t=-A\nabla \psi_t A,\qquad A_{,k}=-A\nabla \psi_{,k} A,\qquad A''=-2 A'\nabla\psi' A-A\nabla\psi'' A.
\end{equation}
These identities will be often used.

\subsection{A smooth approximation of the ALE formulation}
Given an initial data $h_0\in H^2$ and two regularization parameters $\epsilon,\kappa>0$, we define a smooth approximation of the initial height function $\jkap h_0$. We write $h_{\epsilon,\kappa}(x_1,t)$ for the free boundary corresponding to the initial data $\jkap h_0$. 

We define 
$$
\Omega^{\delta,\epsilon}(0)=\{(x_1,x_2),\, x_1\in\bbT,\,c_b<x_2<\jdel\jkap h_0(x_1)\},
$$
$$
\Gamma^{\delta,\epsilon}(0)=\{(x_1,\jdel\jkap h_0(x_1)),\, x_1\in\bbT\},
$$
and 
\begin{equation}\label{phi1epsilonkappa}
\phi^{\epsilon,\kappa}_1(x_1,x_2)=\left(x_1,x_2+\jdel\jkap h_0(x_1)\left(1-\frac{x_2}{c_b}\right)\right).
\end{equation}
We construct $\phi^{\epsilon,\kappa}_2$ by solving
\begin{subequations}\label{phi2epsilonkappa}
\begin{alignat}{2}
\Delta \phi^{\epsilon,\kappa}_2&=0\qquad &&\text{on}\quad \Omega^{\delta,\epsilon}(0)\times[0,T_{\epsilon,\kappa}]\,,\\
\phi^{\epsilon,\kappa}_2(t)&= e+[\jeps\jeps\jkap h_0(x_1)-\jdel\jkap h_0(x_1)]e_2 \qquad &&\text{on}\quad \Gamma\times[0,T_{\epsilon,\kappa}],\,,\\
\phi^{\epsilon,\kappa}_2 &= e &&\text{on}\quad \Gamma_{bot}\times[0,T_{\epsilon,\kappa}] \,.
\end{alignat}
\end{subequations}

We can use Proposition \ref{approxsol} together with \eqref{phi1epsilonkappa} and \eqref{phi2epsilonkappa} to construct solutions to the approximate $ \epsilon \kappa$-problem on a time interval $[0,T_{\epsilon,\kappa}]$:
\begin{subequations}\label{HS_ALE_Onephase_reg}
\begin{alignat}{2}
v_{\epsilon,\kappa}^i+ (A_{\epsilon,\kappa})^k_i(q_{\epsilon,\kappa}+\psi_{\epsilon,\kappa}^2)_{,k}&= 0 \qquad&&\text{in}\quad\Omega\times[0,T_{\epsilon,\kappa}]\,,\\
(A_{\epsilon,\kappa})^i_j(v_{\epsilon,\kappa})^j_{,i} &= 0 &&\text{in}\quad\Omega\times[0,T_{\epsilon,\kappa}]\,,\\
h_{\epsilon,\kappa}(t)&=\jkap h_0+\int_0^t v_{\epsilon,\kappa}^i J_{\epsilon,\kappa}(A_{\epsilon,\kappa})^k_iN^k ds\qquad&&\text{on}\quad\Gamma\times[0,T_{\epsilon,\kappa}]\,,\\
q_{\epsilon,\kappa}&=0 &&\text{on}\quad\Gamma\times[0,T_{\epsilon,\kappa}]\,,\\
v_{\epsilon,\kappa}\cdot e_2&=0 &&\text{on}\quad\Gamma_{bot}\times[0,T_{\epsilon,\kappa}]\,,\\
\psi_{\epsilon,\kappa}&=\phi^{\epsilon,\kappa}_2\circ\phi^{\epsilon,\kappa}_1&&\text{in}\quad\Omega\times\{t=0\}\,,\\
\Delta \psi_{\epsilon,\kappa}(t)&=\Delta \psi_{\epsilon,\kappa}(0)&&\text{in}\quad\Omega\times[0,T_{\epsilon,\kappa}]\,,\\
\psi_{\epsilon,\kappa}(t)&=e+\jeps\jeps h_{\epsilon,\kappa}(t)N&&\text{on}\quad\Gamma\times[0,T_{\epsilon,\kappa}]\,,\\
\psi_{\epsilon,\kappa}(t)&=e&&\text{on}\quad\Gamma_{bot}\times[0,T_{\epsilon,\kappa}] \,,
\end{alignat}
\end{subequations}
where
$$
A_{\epsilon,\kappa} = [\nabla \psi_{\epsilon,\kappa}] ^{-1} \text{ and } J_{\epsilon,\kappa} = \det \nabla \psi_{\epsilon,\kappa} \,.
$$

Having solutions to (\ref{HS_ALE_Onephase_reg}), we focus on obtaining the uniform (in $\epsilon$ and $\kappa$) lifespan. We are going to perform the estimates in a two step procedure. First, we focus on $\kappa-$independent estimates (that may depend on $\epsilon$), and then we focus on $\epsilon-$independent estimates.

To simplify notation, we drop the $\epsilon$ and $\kappa$ notation except when it is computationally used,  but note that our dependent variables implicitly depend upon $\epsilon$ and $\kappa$.

\subsection{$\kappa$-independent estimates} Abusing notation, we redefine
$$
\tilde{\tau}=(1,\jeps\jeps h'(x_1,t)),\,\tilde{n}=(-\jeps\jeps h'(x_1,t),1).
$$

We define the higher-order energy function to be
$$
E(t)=\max_{0\leq s\leq t}|h^{\kappa}(s)|^2_{2}+\int_0^t\|v(s)\|_{2,-}^2ds.
$$
The solutions to (\ref{phi2epsilonkappa}) have sufficient regularity to ensure that our higher-order energy function  $E(t)$ is continuous.
We take $T_{\epsilon,\kappa}$ small enough to ensure that the following four conditions hold:
\begin{enumerate}
\item for a fixed constant $\delta_1>0$ that only depends on $h_0$,
 \begin{equation}\label{bootstrap1}
\|A(t)-A(0)\|_{L^\infty}\leq \delta_1\ll 1 \,;
\end{equation}

\item  $E(t)\leq z^*$ for a fixed constant $z^*$ (that will be chosen below)\,;
\item $\min_{0\leq t\leq T_{\kappa}}-q_{,2}(t)>-\frac{q_{,2}(0)}{2}$\,;
\item with $c_b$ given in (\ref{ref_domain}), 
 \begin{equation}\label{bootstrap4}
\min_{x_1} h(x_1,t)>c_b \,.
\end{equation}
\end{enumerate}
Again, we let $C$ denote a constant that may change from line to line. This constant may depend on $h_0$ and $\epsilon$, but not on $\kappa.$ 
We let $\mathcal{P}(x)$ denote a polynomial with coefficients that may depend on $h_0$ and $\epsilon$, but, again, they do not depend on $\kappa$. This polynomial may change from line to line. 

Our goal is to prove the following polynomial estimate for the energy:
$$
E(t)\leq \mathcal{M}_0+\sqrt[12]{t}\mathcal{Q}(E(t)),
$$
for a certain constant $\mathcal{M}_0$ and polynomial $\mathcal{Q}$. We choose $T_{\epsilon, \kappa}\leq \min\{1,T^*_1\}$ with $T^*_1$ such that
$$
\mathcal{Q}(z^*)\left(T^*_1\right)^{1/12}\leq \delta_2\ll 1,
$$
for $ \delta _2$ a fixed constant satisfying  $0<\delta_2<\delta_1\ll 1$.

\subsubsection{Estimates for some lower-order norms of $h^{\kappa}$} 
From (\ref{HS_ALE_Onephase_reg}c),
\begin{equation}\label{ht1.5}
\int_0^t |h_{t}|^2_{1.5}ds\leq  C \, E(t).
\end{equation}

Using \eqref{ht1.5} together with the fundamental theorem of calculus, we have that
\begin{equation}\label{lowf}
|h(t)-\jkap h_0|_{1.5}\leq \sqrt{t}\left(\int_0^t|h_{t}|_{1.5}^2ds\right)^{1/2}\leq C \sqrt{t}\sqrt{E(t)}
\end{equation}

Now,
\begin{equation}\label{lowf2}
|h^{\kappa}(t)-\jkap h^{\kappa}_0|_{1.75}\leq C|h(t)-\jkap h_0|_{1.5}^{1/2}|h^{\kappa}(t)-\jkap h^{\kappa}_0|_{2}^{1/2}\leq C \sqrt{E(t)}t^{1/4},
\end{equation}
and
$$
|h^{\kappa}(t)|_{1.75}\leq C|h_0|_{1.75}.
$$
Notice that, by taking a small enough time and using \eqref{lowf}, we recover our \emph{bootstrap} assumption \eqref{bootstrap4}.

\subsubsection{Some estimates for the mapping $\psi$} We consider here the regularity properties of the mapping $\psi$ given in (\ref{HS_ALE_Onephase_reg}e-h). We have the following estimates
$$
\|\psi(0)\|_{2,-}\leq  C(\delta)|h_0|_{1.5},\,\,
\|\psi(0)\|_{2.5,-}\leq C(\delta)|h_0|_{2},\,\,\|\psi(0)\|_{3,-}\leq  C(\delta)|h_0|_{2.5},
$$
and, using elliptic estimates, \eqref{J1.25}, and \eqref{lowf2},
\begin{equation}\label{psi2.25b}
\|\psi(t)-\psi(0)\|_{2.25,-}\leq C|\psi(t)-\psi(0)|_{1.75}\leq C|h(t)-h(0)|_{1.75}\leq \sqrt[4]{t}C\sqrt{E(t)},
\end{equation}
\begin{equation}\label{J1.25b}
\|J(t)-J(0)\|_{1.25,-}\leq \sqrt[4]{t}C\sqrt{E(t)}. 
\end{equation}
By taking a small enough time, we can obtain the uniform bounds
\begin{equation}\label{Jmin}
\max_{0\leq t\leq T_{\epsilon, \kappa}} \|J(t)\|_{1.25,-}+\|\psi(t)\|_{2.25,-}\leq C,\qquad \min_{0\leq t\leq T_{\epsilon, \kappa}}\min_{x\in\Omega} J(t)\geq C.
\end{equation}
Using elliptic estimates as in Section \ref{subsection_psi}, we have
\begin{equation}\label{boundpsi}
\|\psi(t)\|_{2.5,-}\leq  C(|h(t)|_{2}+1),\qquad \|\psi(t)\|_{3,-}\leq  C(|h(t)|_{2.5}+1)
\end{equation}
Furthermore,
\begin{equation}\label{boundA}
\|A(t)-A(0)\|^2_{1,-}\leq tE(t) \,,
\end{equation}
and using  interpolation once again, we have that
\begin{align}
\|A(t)-A(0)\|_{1.25,-}^2 & \leq C\|A(t)-A(0)\|_{1,-}\|A(t)-A(0)\|_{1.5,-}\leq\sqrt{t}C\sqrt{E(t)} \,, \label{boundA2}\\
\|A(t)-A(0)\|_{1.375,-}^2& \leq \sqrt[4]{t}C\sqrt{E(t)}\,. \label{boundA3}
\end{align}
In particular, by taking a small enough time, our previous \emph{bootstrap} assumption \eqref{bootstrap1} is strengthened. Furthermore, using \eqref{boundA3},
$$
\|A(t)\|_{1.375,-}\leq C.
$$
\subsubsection{Some estimates for lower-order norms of $v$}
Just as in Section \ref{secmaxprinL2}, we have the following $L^2$ energy law:
$$
|\jeps h (t)|^2_0+2\int_0^t\|v(s)\|_0^2ds=|\jeps \jkap h_0|_0^2\,,
$$ 
from which it follows that
\begin{equation}\label{lowv}
2\int_0^t\|v(s)\|_{0,-}^2ds\leq |h_0|_0^2.
\end{equation}

\subsubsection{The estimates for the pressure}
The elliptic problem for $q$  is
\begin{alignat*}{2}
-(A^i_jA^k_jq_{,k})_{,i} & =0  && \text{ in }\Omega \,,\\
q& =0 && \text{ on }\Gamma\,, \\
q,_k A^k_j A^i_j N_i &=\psi^2_{,2} && \text{ on } \Gamma_{bot} \,,
\end{alignat*} 
where we recall that on $\Gamma$, $N= e_2$ while on $\Gamma_{bot}, N=-e_2$.

We have that $A_0 A_0^T$ is symmetric and positive semi-definite: $[A_0 A_0^T]^i_j   \xi _i\xi _j \ge \mathcal{L} | \xi |^2$;  consequently, due to \eqref{boundA3},
$$
\|A_0A_0^T-A(t) A^T(t)\|_{L^\infty}\leq  C \sqrt{t}\sqrt{E(t)}  \,,
$$
and we see that for $t$ sufficiently small, 
$$
{\frac{\mathcal{L} }{2}} |\xi|^2\leq [A (\cdot, t ) A^T(\cdot , t)]^i_j\xi^i\xi^j\leq 2 \mathcal{L} |\xi|^2.
$$

We have that
$$
C\|\nabla q\|_{0,-}^2\leq \int_\Omega A^i_jA^k_jq_{,k}q_{,i} dx=\int_{\Gamma_{bot}}\psi^2_{,2}q ds.
$$

In particular, due to Poincar\'e inequality, there exists a universal constant such that
$$
\|q\|_{1,-}\leq C.
$$
Elliptic estimates (see Lemma \ref{lemaa6}) together with \eqref{boundA2} show that
$$
\|q\|_{2.25,-}\leq C\|\nabla q\|_{L^\infty(\Omega^-)}\leq C\|q\|_{2.125},
$$
and then, using interpolation and Young's inequality, we find the bound
\begin{equation}\label{Q1.25onephase}
\|q\|_{2.25,-}\leq C.
\end{equation}
Thus, once again,  elliptic estimates show that
\begin{equation}\label{Q1.5onephase}
\|q\|_{2.5,-}\leq C\left(1+\|A(t)\|_{1.5,-})\|\nabla q\|_{L^\infty(\Omega^-)}\right)\leq C(1+\|A(t)\|_{1.5,-})  \,,
\end{equation}
 and consequently, 
\begin{equation}\label{v1.5}
\sup_{0\leq t\leq T_{\epsilon,\kappa}} \|v\|_{1.5,-}\leq C(|h^{\kappa\kappa}|_2+1),
\quad
\sup_{0\leq t\leq T_{\epsilon,\kappa}} |h_{t}|_{1}\leq CE(t).
\end{equation}

\subsubsection{The Rayleigh-Taylor stability condition revisited}
By the assumption \eqref{RTonephase} in the Theorem \ref{localonephase}, for $0<\epsilon,\kappa\ll1$ taken sufficiently small,
$$
-\nabla p(0)\cdot \tilde{n}(0)>0\text{ at }\Gamma(t),
$$
so
$$
-A^2_i(0)q_{,2}(0)\tilde{n}^i(0)=-JA^2_i(0)A^2_i(0)q_{,2}(0)>0\text{ at }\Gamma.
$$
In particular,
\begin{equation}\label{lambdaRT}
\lambda=\min_{x_1}-q_{,2}(0)>0\text{ at }\Gamma.
\end{equation}
To simplify notation, we write
$$
B^{ik}(t)=A^i_j(t)A^k_j(t),
$$
and we study the elliptic problem for 
$$
\bar{q}=q(t)-q(0):
$$
\begin{align*}
-(B^{ik}(t)\bar{q}_{,k})_{,i}& =-([B^{ik}(0)-B^{ik}(t)]q_{,k}(0))_{,i}&& \qquad\text{in}\,\Omega\times[0,T_{\epsilon,\kappa}]\\
\bar{q}& =0 && \qquad\text{in}\,\Gamma\times[0,T_{\epsilon,\kappa}]\\
\bar{q}_{,k} B^{ik}(t) N_i& =[B^{ik}(0)-B^{ik}(t)]q_{,k}(0)N_i+ \psi^2_2(t)-\psi^2_2(0) && \qquad \text{in}\,\Gamma_{bot}\times[0,T_{\epsilon,\kappa}].
\end{align*}

Using elliptic estimates together with the estimates \eqref{psi2.25b}, \eqref{boundA2}, \eqref{boundA3} and the smallness condition on the time, we obtain
\begin{eqnarray*}
\|\bar{q}\|_{2,-}&\leq& C\left(\|([B^{ik}(0)-B^{ik}(t)]q_{,k}(0))_{,i}\|_{0,-}+|[B^{ik}(0)-B^{ik}(t)]q_{,k}(0)N_i+ \psi^2_{,2}(t)-\psi^2_{,2}(0)|_{0.5}\right)\\
&\leq& C\left(\|B(0)-B(t)\|_{1.25,-}\|q(0)\|_{2,-}+\|\nabla[B(0)-B(t)]\|_{0,-}\|q(0)\|_{2.25,-}\right.\\
&&\left.+|[B^{2k}(0)-B^{2k}(t)]|_{0.5}|q_{,k}(0)|_{0.75}+|\psi^2_{,2}(t)-\psi^2_{,2}(0)|_{0.5}\right)\\
&\leq& \sqrt{t}\mathcal{P}(E(t))\\
&\leq& \delta_2.
\end{eqnarray*}
We use the inequality
$$
\|fg\|_{r,-}\leq C\|f\|_{r,-}\|g\|_{s,-},\,\,0\leq r\leq s,\,s>1+r 
$$
to find that
$$
\|[B^{ik}(0)-B^{ik}(t)]q_{,ki}(0)\|_{0.25,-}\leq C\|q(0)\|_{2.25,-}\|[B^{ik}(0)-B^{ik}(t)]\|_{1.375,-}.
$$
We apply \eqref{boundA3} to find that
$$
\|[B^{ik}(0)-B^{ik}(t)]q_{,ki}(0)\|_{0.25,-}\leq \sqrt[8]{t}\mathcal{P}(E(t)).
$$
This is the only place where the bound \eqref{boundA3} plays an essential role. For any other smallness estimate concerning $A(t)-A(0)$ it is enough with \eqref{boundA2}.

We want a bound showing the smallness of $\bar{q}_{,2}$ pointwise on $\Gamma$. As a result, we need an estimate stronger than just $H^2$. We focus our attention then in $H^{2.25}$. Elliptic regularity then shows that 
\begin{align} 
\|\bar{q}\|_{2.25,-}&\leq C\left(\|([B^{ik}(0)-B^{ik}(t)]q_{,k}(0))_{,i}\|_{0.25,-}+|[B^{ik}(0)-B^{ik}(t)]q_{,k}(0)N_i+ \psi^2_{,2}(t)-\psi^2_{,2}(0)|_{0.75}\right.\nonumber\\
& \qquad \left.+(1+\|B(t)\|_{1.25,-})\|\nabla \bar{q}\|_{L^\infty(\Omega^-)}\right)\nonumber\\
&\leq \sqrt[8]{t}\mathcal{P}(E(t))\nonumber\\
&\leq \delta_2\label{qbar}.
\end{align} 
Consequently, on $\Gamma$, we have that
$$
-q_{,2}(x_1,t)=-q_{,2}(x_1,t)+q_{,2}(x_1,0)-q_{,2}(x_1,0)\geq -q_{,2}(x_1,0)-C\delta_2,
$$
and our \emph{bootstrap} assumption \eqref{bootstrap4} is satisfied:
$$
-\min_{x_1}q_{,2}(x_1,t)\geq -\min_{x_1}q_{,2}(x_1,0)-C\delta_2\geq -\frac{\min_{x_1}q_{\kappa,2}(x_1,0)}{2}.
$$

\subsubsection{The estimate for  $h\in L^2 ( 0,T_ \kappa ; H^{2.5} (\Gamma))$}
From equation (\ref{HS_ALE_Onephase_reg}a), we see that
$$
v\cdot \tau=-\tau\cdot e_2\; \text{ at }\Gamma. 
$$
It follows that
$$
-\frac{v'\cdot \tau}{\tilde{n}\cdot e_2+v\cdot \tilde{n}}=-\frac{v'\cdot \tau}{1+h_{ t}}=\frac{h^{\kappa\kappa\prime\prime}}{g^{3/2}}.
$$
Thus,
\begin{eqnarray*}
h^{\kappa\kappa\prime\prime}&=&-\frac{v_{1}'+h^{\kappa\kappa\prime}v_{2}'}{1+h_t}(1+(h^{\kappa\kappa\prime})^2)\\
&=&-(v_{ 1}'+h^{\kappa\kappa\prime}v_{ 2}')(1+(h^{\kappa\kappa\prime})^2)+\frac{(v_{ 1}'+h^{\kappa\kappa\prime}v_{ 2}')h_{ t}}{1+h_{ t}}(1+(h^{\kappa\kappa\prime})^2),
\end{eqnarray*}
and, using \eqref{v1.5},
$$
\int_0^t|h^{\kappa\kappa}|_{2.5}^2ds\leq CE(t).
$$

\subsubsection{The energy estimates}
We write (\ref{HS_ALE_Onephase}a) as
$$
v^i+ A^k_i(q_{,k}+\psi^2)_{,k}=0 \text{ in }\Omega.
$$
We take two horizontal derivatives of this expression, test against $v''$ and integrate by parts to find that
$$
\int_0^t\int_{\Omega^-}|v''|^2 dxdy+\mathfrak{I}_1+\mathfrak{I}_2+\mathfrak{I}_3=0.
$$
The higher-order terms are
\begin{align*} 
\mathfrak{I}_1& =\int_0^t\int_{\Omega^-}A^k_i (q+\psi\cdot e_2)''_{,k}(v^i)''dxdy, \\
\mathfrak{I}_2&=\int_0^t\int_{\Omega^-}(A^k_i)'' (q+\psi\cdot e_2)_{,k}(v^i)''dx dy,\end{align*} 
while 
$$
\mathfrak{I}_3=2\int_0^t\int_{\Omega^-}(A^k_i)' (q+\psi\cdot e_2)'_{,k}(v^i)''dx dy.
$$ 
is the lower-order term. Integrating by parts in the term $I_1$ and using $JA^k_iN^k=\sqrt{g}n_i$, we obtain
$$
\mathfrak{I}_1=\mathfrak{J}_1+\mathfrak{J}_2,
$$
with
\begin{align*} 
\mathfrak{J}_1&=-\int_0^t\int_{\Omega^-}(q+\psi\cdot e_2)''(A^k_i (v^i)'')_{,k}dx dy, \\
\mathfrak{J}_2&=\int_0^t\int_\Gamma \psi''\cdot e_2 J^{-1}(v''\cdot \tilde{n}) dsdy=\int_0^t\int_\Gamma J^{-1} \jeps\jeps h'' (v''\cdot \tilde{n}) dsdy.
\end{align*} 
Using the Piola identity  $(JA^k_i)_{,k} =0$ and the divergence-free condition $v^i,_k A^k_i=0$, we see that
$$
(A^k_i (v'')^i)_{,k}=(A^k_i)_{,k} (v'')^i+A^k_i (v'')^i_{,k}=-J_{,k} A^k_i J^{-1}(v'')^i-(A^k_i)'' v^i_{,k}-2(A^k_i)'(v^i)'_{,k},
$$
and $\mathfrak{J}_1=\mathfrak{K}_1+\mathfrak{K}_2+\mathfrak{K}_3$ where
\begin{align*} 
\mathfrak{K}_1&=\int_0^t\int_{\Omega^-}(q+\psi^2)''(A^k_i)'' v^i_{,k}dx dy, \\
\mathfrak{K}_2&=\int_0^t\int_{\Omega^-}(q+\psi^2)''2 (A^k_i)' (v^i)'_{,k}dx dy,\\
\mathfrak{K}_3&=\int_0^t\int_{\Omega^-}(q+\psi^2)''J_{,k}J^{-1}A^k_i(v^i)''dx dy
\end{align*} 
The term $\mathfrak{K}_2$ can be easily bounded using \eqref{propA}, \eqref{boundpsi} and \eqref{Q1.5onephase} together with the Sobolev embedding theorem:
$$
|\mathfrak{K}_2|\leq C\int_0^t\|v\|_{2,-}\|A'\|_{L^4}\left(\|q\|_{2.5,-}+\|\psi\cdot e_2\|_{2.5,-}\right)dy\leq \sqrt{t}\mathcal{P}(E(t)).
$$
To bound the term $\mathfrak{K}_3$, we use H\"{o}lder's inequality with an $L^2-L^4-L^4-L^\infty$ bound, we have that
$$
\mathfrak{K}_3\leq \sqrt{t}\mathcal{P}(E(t)).
$$

The term $\mathfrak{K}_1$ can be simplified using \eqref{propA}; we write
$\mathfrak{K}_1=\mathfrak{L}_1+\mathfrak{L}_2$,
with
\begin{align*} 
\mathfrak{L}_1&=-\int_0^t\int_{\Omega^-}(q+\psi^2)''(2A'\nabla\psi' A)^k_i v^i_{,k}dx dy, \\
\mathfrak{L}_2&=-\int_0^t\int_{\Omega^-}(q+\psi^2)''A^k_j\psi^j_{,11r} A^r_i v^i_{,k}dx dy,
\end{align*} 
where we recall that $\psi_{,11}=\psi''$. $\mathfrak{L}_1$ is estimated using H\"{o}lder's inequality and the Sobolev embedding theorem:
\begin{eqnarray*}
|\mathfrak{L}_1|&\leq& \int_0^t (\|q\|_{2.5,-}+\|\psi\|_{2.5,-})\|A\|_{L^\infty}\|v\|_{1.5,-}\|A\|_{1.5,-}\|\nabla \psi\|_{1.5,-}dy\\
&\leq & C\sqrt{t}\mathcal{P}(E(t)).
\end{eqnarray*}
Similarly, 
\begin{eqnarray*}
|\mathfrak{L}_2|&\leq& C(\| q\|_{2.5,-}+\|\psi^2\|_{2.5,-})\|A\|_{L^\infty}^2\sqrt{t}\left(\int_0^t\|\psi(y)\|_{3,-}^2dy\right)^{0.5}\|v\|_{1.5,-}\\
&\leq&\sqrt{t}\mathcal{P}(E(t)).
\end{eqnarray*}

Next, using \eqref{propA}, we write
$
\mathfrak{I}_2=\mathfrak{K}_4+\mathfrak{K}_5$, where
\begin{align*} 
\mathfrak{K}_4&=-\int_0^t\int_{\Omega^-}A^k_j\psi^j_{,11r} A^r_i (q+\psi^2)_{,k}(v^i)''dx dy,\\
\mathfrak{K}_5&=-\int_0^t\int_{\Omega^-}2(A')^k_j\psi^j_{,1r} A^r_i (q+\psi^2)_{,k}(v^i)''dx dy.
\end{align*}  
We have that
$$
|\mathfrak{K}_5|\leq \int_0^t C\|A\|_{1.5,-}\|\nabla\psi\|_{1.5,-}\|A\|_{L^\infty}\| \nabla (q+\psi^2)\|_{L^\infty}\|v\|_{2,-}dy\leq \sqrt{t}\mathcal{P}(E(t)).
$$
For $\mathfrak{K}_4$, we integrate-by-parts and write $\mathfrak{K}_4=\mathfrak{L}_3+\mathfrak{L}_4$, where
\begin{align*} 
\mathfrak{L}_3&=\int_0^t\int_{\Omega^-}\psi^j_{,11} (A^k_jA^r_i (q+\psi^2)_{,k}(v^i)'')_{,r}dx dy, \\
\mathfrak{L}_4&=-\int_0^t\int_{\Gamma}\psi^j_{,11} A^k_jA^r_i (q+\psi^2)_{,k}(v^i)''N^rds.
\end{align*} 
We further decompose $\mathfrak{L}_3$ as $\mathfrak{L}_3=\mathfrak{M}_1+\mathfrak{M}_2+\mathfrak{M}_3$, where
\begin{align*} 
\mathfrak{M}_1&=\int_0^t\int_{\Omega^-}\psi^j_{,11} A^k_{j,r}A^r_i (q+\psi^2)_{,k}(v^i)''dx dy, \\
\mathfrak{M}_2&=\int_0^t\int_{\Omega^-}\psi^j_{,11} A^k_{j}A^r_{i,r} (q+\psi^2)_{,k}(v^i)''dx dy, \\
\mathfrak{M}_3&=\int_0^t\int_{\Omega^-}\psi^j_{,11} A^k_{j}A^r_i (q+\psi^2)_{,rk}(v^i)''dx dy, \\
\mathfrak{M}_4&=\int_0^t\int_{\Omega^-}\psi^j_{,11} A^k_{j}A^r_i (q+\psi^2)_{,k}(v^i)''_{,r}dx dy.
\end{align*} 
For the first three terms,
\begin{eqnarray*}
|\mathfrak{M}_1|+|\mathfrak{M}_2|+|\mathfrak{M}_3|&\leq& \int_0^t\|\nabla\psi\|_{1.5,-}\|v\|_{2,-}\|A\|_{L^\infty}\left[\|A\|_{1.5,-}(\|\nabla q\|_{1.25,-}\right.\\
&&+\left.\|\nabla\psi\|_{1.25,-})+\|A\|_{L^\infty}(\|\nabla q\|_{1.5,-}+\|\nabla\psi\|_{1.5,-})\right]dy\\
&\leq& \sqrt{t}\mathcal{P}(E(t)).
\end{eqnarray*}
In the term $\mathfrak{M}_4$, we use $v^i,_k A^k_i =0$  and write
$\mathfrak{M}_4=\mathfrak{N}_1+\mathfrak{N}_2$, where
\begin{align*} 
\mathfrak{N}_1&=-\int_0^t\int_{\Omega^-}\psi^j_{,11} A^k_{j}(A^r_i)'' (q+\psi^2)_{,k} v^i_{,r}dx, \\
\mathfrak{N}_2&=-2\int_0^t\int_{\Omega^-}\psi^j_{,11} A^k_{j}(A^r_i)' (q+\psi^2)_{,k} v^i_{,1r}dx. 
\end{align*} 
These terms can be estimated in the same fashion as the term $K_1$ above.  Also,
\begin{eqnarray*}
|\mathfrak{N}_1|&\leq& \sqrt{t}C\left(\int_0^t\|\psi(y)\|_{3,-}^2dy\right)^{0.5}\|v\|_{1.5,-}\|\nabla\psi\|_{1.5,-}(\|\nabla q\|_{1.25,-}+\|\nabla \psi\|_{1.25,-})\\
&\leq& \sqrt{t}\mathcal{P}(E(t)),
\end{eqnarray*}
and
\begin{eqnarray*}
|\mathfrak{N}_2|&\leq& \sqrt{t}C\left(\int_0^t\|v(y)\|_{2,-}^2dy\right)^{0.5}\|\nabla \psi \|_{1.5,-}^2(\|\nabla q\|_{1.25,-}+\|\nabla \psi\|_{1.25,-})\\
&\leq& \sqrt{t}\mathcal{P}(E(t)).
\end{eqnarray*}
The term $\mathfrak{I}_3$ can be bounded using H\"{o}lder's inequality and the Sobolev embedding theorem:  
$$
|\mathfrak{I}_3|\leq \sqrt{t}\mathcal{P}(E(t)).
$$
We next analyze the boundary integrals. We have that
$$
BI=\mathfrak{J}_2+\mathfrak{L}_4=\int_0^t\int_{\Gamma}(\psi''\cdot (v+e_2))((v^i)'' \tilde{n}_iJ^{-1})ds.
$$
To estimate this terms we will extensively use the lower bound for $J$. We write $BI=\mathfrak{O}_1+\mathfrak{O}_2+\mathfrak{O}_3$, where
\begin{align*} 
\mathfrak{O}_1&=\int_0^t\int_{\Gamma}(\psi''\cdot (v+e_2))h_t''J^{-1}dsdy, \\
\mathfrak{O}_2&=-\int_0^t\int_{\Gamma}(\psi''\cdot (v+e_2))(v \cdot \tilde{n}'' J^{-1})dsdy \\
\mathfrak{O}_3&=-2\int_0^t\int_{\Gamma}(\psi''\cdot (v+e_2))(v' \cdot \tilde{n}'J^{-1})dsdy.
\end{align*} 
The inequality $|v|_{1}\leq C\|v\|_{1.5,-}$ together with the embedding $H^{0.25}(\Gamma)\subset L^4(\Gamma)$ shows that
$$
|\mathfrak{O}_3|\leq C(|v|_1^2+1)\int_0^t |h^{\kappa\kappa}|^2_{2.25}dy\leq \sqrt{t}\mathcal{P}(E(t)).
$$
The term $\mathfrak{O}_2$ reads
$$
\mathfrak{O}_2=\int_0^t\int_{\Gamma}h^{\kappa\kappa\prime\prime}(v_2+1)(v_1 h^{\kappa\kappa\prime\prime\prime})J^{-1}dsdy.
$$
By forming an exact derivative, integrating-by-parts and using \eqref{boundpsi}, we see that
$$
|\mathfrak{O}_2|\leq C\int_0^t |h^{\kappa\kappa\prime\prime}|_{L^3}^2|\nabla\psi'|_{L^3}dy\leq C\int_0^t |h^{\kappa\kappa}|_{2+1/6}^2\|\psi\|_{2+2/3,-}dy\leq C\int_0^t |h^{\kappa\kappa}|_{2+1/6}^3dy.
$$
Consequently, due to the interpolation inequality
$$
|h^{\kappa\kappa}|_{2+1/6}^3\leq C|h^{\kappa\kappa}|_{2}^2|h^{\kappa\kappa}|_{2.5},
$$
we find that
$$
|\mathfrak{O}_2|\leq \sqrt{t}\mathcal{P}(E(t)).
$$
Using $[(v+e_2)\cdot \tau]=0$ and $\sqrt{g}n_i=JA^k_iN^k$ , the term $\mathfrak{O}_1$ can be written as
\begin{align*}
\mathfrak{O}_1&=\int_0^t\int_{\Gamma}(\psi''\cdot [(v+e_2)\cdot n] n)h_t''J^{-1}dsdy\\
&=\int_0^t\int_{\Gamma}(\psi''\cdot [-A^2_iq_{,2} (\sqrt{g})^{-1}A^2_i] n)h_t''dsdy\\
&=\int_0^t\int_{\Gamma}\psi''\cdot [-q_{,2} ] \tilde{n}h_t'' J^{-2}dsdy\\
&=\int_0^t\int_{\Gamma}h^{\kappa\kappa\prime\prime}[-q_{,2}]h_t''J^{-2}dsdy\\
&=\int_0^t\int_{\Gamma}h^{\kappa\kappa\prime\prime}\left[\frac{-q_{,2}(t)}{J^{-2}(t)}+\frac{q_{,2}(0)}{{J^{-2}(0)}}-\frac{q_{,2}(0)}{J^{-2}(0)}\right]h_t''dsdy\\
&=\mathfrak{P}_1+\mathfrak{P}_2+\mathfrak{P}_3.
\end{align*}
Using \eqref{qbar}, 
\begin{align*}
|\mathfrak{P}_1| & =\left|\int_0^t\int_{\Gamma}h^{\kappa\kappa\prime\prime}J^{-2}(t)[q_{,2}(t)-q_{,2}(0)]h_t''dsdy\right|\\
& \leq C\int_0^t|h^{\kappa\kappa}|_{2.5}\|J^{-2}\|_{L^\infty}\|q_{,2}(t)-q_{,2}(0)\|_{1.25,-}|h_t|_{1.5}dy\\
& \leq \delta_2CE(t).
\end{align*}
The second error term can be bounded in the same way using \eqref{J1.25b}:
\begin{align*}
|\mathfrak{P}_2| & =\left|\int_0^t\int_{\Gamma}h^{\kappa\kappa\prime\prime}q_{,2}(0)[J^{-2}(t)-J^{-2}(0)]h_t''dsdy\right|\\
& \leq C\int_0^t|h^{\kappa\kappa}|_{2.5}\|J(t)-J(0)\|_{1.25,-}|h_t|_{1.5}dy\\
& \leq \delta_2CE(t).
\end{align*}
Finally, $\mathfrak{P}_3=\mathfrak{Q}_1+\mathfrak{Q}_2$ with
\begin{align} 
\mathfrak{Q}_1&=\int_0^t\int_{\Gamma}h^{\kappa\prime\prime}[\jeps(-q_{,2}(0)J^{-2}(0)h_t'')-[-q_{,2}(0)J^{-2}(0)]\jeps h_t'']dsdy,  \label{Qone}\\
\mathfrak{Q}_2&=\int_0^t\int_{\Gamma}h^{\kappa\prime\prime}[-q_{,2}(0)J^{-2}(0)] h_t^{\kappa\prime\prime}dsdy. \nonumber
\end{align} 
The term $\mathfrak{Q}_1$ can be bounded using Proposition \ref{commutator}:
$$
|\mathfrak{Q}_1|\leq \int_0^t|h^{\kappa\kappa}|_2|q_{,12}(0)J^{-2}(0)+q_{,2}(0)J^{-3}(0)J_{,1}(0)|_{L^{\infty}}|h^{\kappa}_t|_1
dy.
$$
The term $|q_{,12}(0)|_{L^{\infty}}$ can be bounded (using standard elliptic estimates) in terms of the initial data as long as the initial data verifies $|\jkap h_0|_{2.5+s}<\infty$, $s>0$. The same situation arises when dealing with $J_{,1}(0)$. Consequently, this term $\mathfrak{Q}_1$ requires $\epsilon>0$, and, in this latter case, we have
$$
|\mathfrak{Q}_1|\leq\sqrt{t}\mathcal{P}(E(t)).$$

Recalling \eqref{Jmin} and \eqref{lambdaRT}, the term $Q_2$ gives us an energy term
$$
C\frac{\lambda}{2}\left[|h^{\kappa\prime\prime}|_0^2-|h_0^{\kappa\epsilon\prime\prime}|_0^2\right]\leq \frac{1}{2}\int_{\Gamma}-q_{,2}(0)J^{-2}(0)[(h^{\kappa\prime\prime})^2-(h_0^{\kappa\epsilon\prime\prime})^2]ds \,;
$$
hence, 
\begin{equation}\label{almostdone}
\int_0^t\|v(y)\|_{0,-}^2+\|v''(y)\|_{0,-}^2dy+|h^{\kappa}(t)|_2^2\leq \mathcal{M}_0+\sqrt[12]{t}\mathcal{P}(E(t)),
\end{equation}
where $\mathcal{M}_0$ is a number depending only on the initial data, $h_0$, and the value of the regularizing parameter $\epsilon>0$.

\subsubsection{The Hodge decomposition elliptic estimates} Since in each phase,  $ \operatorname{curl} u=0$, it follows that
$ v^2,_j A^j_1 - v^1,_jA^j_2 =0$.   Therefore,
$$
(A^j_1(t)-A^j_1(0))v^2_{,j}-(A^j_2(t)-A^j_2(0))v^1_{,j}
=-A^j_1(0)v^2_{,j}+A^j_2(0)v^1_{,j} \,,
$$
so that
$$
\|A^j_1(0)v^2_{,j}-A^j_2(0)v^1_{,j}\|_{1,-}\leq C\|A(t)-A(0)\|_{L^\infty}\|v\|_{2,-}+\|A(t)-A(0)\|_{1.5,-}\|v\|_{1.5,-},
$$
and
$$
\int_0^t\|A^j_1(0)v^2_{,j}(y)-A^j_2(0)v^1_{,j}(y)\|_{1,-}^2dy\leq\sqrt{t}\mathcal{P}(E(t)).
$$
Similarly, since in each phase $v^j,_i A^i_j =0$, 
$$
[A^i_j(t)-A^i_j(0)]v^j_{,i}=-A^i_j(0)v^j_{,i},
$$
and
$$
\int_0^t\|A^i_j(0)v^j_{,i}(y)\|_{1,-}^2dy\leq\sqrt{t}\mathcal{P}(E(t)).
$$
Finally,
$$
|v_2|_{1.5}\leq |v''\cdot N|_{-0.5}\leq C\|v''\|_{0,-}\leq \mathcal{M}_0+\sqrt[12]{t}\mathcal{P}(E(t)).
$$
Applying Proposition \ref{Hodge2}, we obtain
\begin{equation}\label{vH2}
\int_0^t\|v(y)\|_{2,-}^2dy\leq \mathcal{M}_0+\sqrt[12]{t}\mathcal{P}(E(t)).
\end{equation}
\eqref{vH2} together with \eqref{almostdone} and the properties of the mollifiers gives us the bound
$$
E(t)\leq \mathcal{M}_0+\sqrt[12]{t}\mathcal{Q}(E(t)),
$$
with $E(t)$ being a continuous function. Thus, we infer the existence of $T^*_\epsilon$ such that
$$
E(t)\leq 2\mathcal{M}_0\,\,\forall 0\leq t\leq T^*_\epsilon.
$$
Notice that $T^*_\epsilon$ depends only on $\epsilon$ and $h_0$.
\subsubsection{Passing to the limit and uniqueness}
Once the uniform bounds are obtained, we can pass to the limit $\kappa\rightarrow0$ in the standard way using Rellich-Kondrachov theorem. 

\subsection{$\epsilon$-independent estimates} In the above analysis, only the integral $Q_1$ in (\ref{Qone}) depends on our smoothing parameter $\epsilon>0$; nevertheless, upon  passing to the limit $\kappa\rightarrow0$, the integral $Q_1$ no longer appears. The main point is that the regularizing effect due to $\epsilon>0$ was only necessary because of $\kappa>0$. As $\kappa=0$, we can now close the estimates and tend $\epsilon$ to zero.

After taking the limit in $\kappa$, we have a solution to the following system
\begin{align*}
v_{\epsilon}^i+ (A_{\epsilon})^k_i(q_{\epsilon}+\psi_{\epsilon}^2)_{,k}&= 0 \qquad&&\text{in}\quad\Omega\times[0,T_{\epsilon}]\,,\\
(A_{\epsilon})^i_j(v_{\epsilon})^j_{,i} &= 0 &&\text{in}\quad\Omega\times[0,T_{\epsilon}]\,,\\
h_{\epsilon}(t)&=\jkap h_0+\int_0^t v_{\epsilon}^i \tilde{n}_i ds\qquad&&\text{on}\quad\Gamma\times[0,T_{\epsilon}]\,,\\
q_{\epsilon}&=0 &&\text{on}\quad\Gamma\times[0,T_{\epsilon}]\,,\\
v_{\epsilon}\cdot e_2&=0 &&\text{on}\quad\Gamma_{bot}\times[0,T_{\epsilon}]\,,\\
\psi_{\epsilon}&=\phi^{\epsilon}_2\circ\phi^{\epsilon}_1&&\text{in}\quad\Omega\times\{t=0\}\,,\\
\Delta \psi_{\epsilon}(t)&=\Delta \psi_{\epsilon}(0)&&\text{in}\quad\Omega\times[0,T_{\epsilon}]\,,\\
\psi_{\epsilon}(t)&=e+h_\epsilon(t)N&&\text{on}\quad\Gamma\times[0,T_{\epsilon}]\,,\\
\psi_{\epsilon}(t)&=e&&\text{on}\quad\Gamma_{bot}\times[0,T_{\epsilon}] \,,
\end{align*}
and $\phi^{\epsilon}_2$ and $\phi^{\epsilon}_2$ are given by
\begin{equation*}
\phi^{\epsilon}_1(x_1,x_2)=\left(x_1,x_2+\jdel\jkap h_0(x_1)\left(1-\frac{x_2}{c_b}\right)\right),
\end{equation*}
and
\begin{align*}
\Delta \phi^{\epsilon}_2&=0\qquad &&\text{on}\quad \Omega^{\delta,\epsilon}(0)\times[0,T_{\epsilon}]\,,\\
\phi^{\epsilon}_2(t)&= e+[\jkap h_0(x_1)-\jdel\jkap h_0(x_1)]e_2 \qquad &&\text{on}\quad \Gamma\times[0,T_{\epsilon}],\,,\\
\phi^{\epsilon}_2 &= e &&\text{on}\quad \Gamma_{bot}\times[0,T_{\epsilon}] \,.
\end{align*}
Now we define the energy
$$
E(t)=\max_{0\leq s\leq t}|h(s)|_{2}+\int_0^t\|v(s)\|_{2,-}^2ds.
$$
We repeat the energy estimates. The only modification affects the term $\mathfrak{O}_1$, that now reads
\begin{align*}
\mathfrak{O}_1&=\int_0^t\int_{\Gamma}(\psi''\cdot [(v+e_2)\cdot n] n)h_t''J^{-1}dsdy\\
&=\int_0^t\int_{\Gamma}(\psi''\cdot [-A^2_iq_{,2} (\sqrt{g})^{-1}A^2_i] n)h_t''dsdy\\
&=\int_0^t\int_{\Gamma}\psi''\cdot [-q_{,2} ] \tilde{n}h_t'' J^{-2}dsdy\\
&=\int_0^t\int_{\Gamma}h''[-q_{,2}]h_t''J^{-2}dsdy\\
&=\int_0^t\int_{\Gamma}h''\left[\frac{-q_{,2}(t)}{J^{-2}(t)}+\frac{q_{,2}(0)}{{J^{-2}(0)}}-\frac{q_{,2}(0)}{J^{-2}(0)}\right]h_t''dsdy\\
&=\mathfrak{P}_1+\mathfrak{P}_2+\mathfrak{P}_3.
\end{align*}
These terms can be bounded in a straightforward way. We get the polynomial estimate
$$
E(t)\leq \mathcal{M}_0+\sqrt[12]{t}\mathcal{Q}(E(t)),
$$
and the existence of $T^*$ such that
$$
E(t)\leq 2\mathcal{M}_0\,\,\forall 0\leq t\leq T^*.
$$
This $T^*$ only depends on the initial data $h_0$. Now, we can pass to the limit $\epsilon\rightarrow0$ using Rellich-Kondrachov theorem. The uniqueness is obtained using the energy method as in Section \ref{sec4}. This concludes with the proof of Theorem \ref{localonephase}.

\section{Proof of Theorem \ref{Cinftyonephase}: Instantaneous parabolic smoothing}\label{sec6}
The proof of this result is a two-step procedure. First, we show that we always can gain an extra half derivative almost everywhere in time. The second step of the argument is a classical bootstrapping procedure.

\subsection{Two-phase Muskat problem}  We begin with the  two-phase case, and consider  initial data $h_{\delta0}\in H^3$ for the infinitely-deep Muskat problem (\ref{HS_Eulerian}a-e)) or the confined Muskat problem (\ref{HS_Eulerian}a-d,e',f) satisfying the smallness criterion \eqref{cgs1} in Theorem \ref{localsmall}. 

We define the higher-order energy function
\begin{equation}\label{energyH3}
E(t)=\max_{0\leq s\leq t}\{|h(s)|_3^2\}+\int_0^t\|w(s)\|_{3,\pm}^2ds.
\end{equation}
Repeating our energy estimates using three tangential derivatives rather than two, we obtain the polynomial inequality
$$
E(t)\leq C|h_{\delta0}|_3^2+\sqrt{t}\mathcal{P}(E(t)).
$$
As a consequence, there exists a time $T^*$ such that we have the bound
$$
\max_{0\leq s\leq T^*}\{|h(s)|_3^2\}+\int_0^t|h(s)|_{3.5}^2ds\leq C|h_{\delta0}|_3^2.
$$
Interpolating with the bound obtained in Theorem \ref{localsmall}, 
we have that
\begin{equation}\label{cinfty2}
\max_{0\leq s\leq T^*}\{|h(s)|_{2.5}^2\}+\int_0^t|h(s)|_{3}^2ds\leq C|h_{\delta0}|_{2.5}^2.
\end{equation}
Now, given $h_0\in H^2$ satisfying the smallness condition \eqref{cgs1}, due to Theorem \ref{localsmall}, we have a solution 
$h\in C([0,T^*],H^2)\cap L^2(0,T^*;H^{2.5}(\Gamma))$. In particular, we can choose $0<\delta\leq T^*$ arbitrarily small so that
$h(\delta)=h_{\delta0}\in H^{2.5}(\Gamma)$ and verifies the smallness criterion \eqref{cgs1}. 
We are going to use $h_{\delta0}$ as the new initial data for the problem. Applying \eqref{cinfty2}, we have thus that the initial data $h_{\delta0}$ provides us with a solution
$$
h_\delta\in C([\delta,T^*]H^{2.5}(\Gamma))\cap L^2(\delta ,T^*;H^{3}(\Gamma)).
$$ 
Due to the uniqueness of solution proved in Theorem \ref{localsmall}, we conclude that the original initial data $h_0$ gives us a solution 
$$
h\in C([0,T^*],H^2(\Gamma))\cap C([\delta,T^*],H^{2.5}(\Gamma))\cap L^2(\delta,T^*; H^{3}(\Gamma))
$$ 
for an arbitrarily small $\delta>0$. Now we proceed by bootstrapping. We can repeat the argument and show that for every positive time, we have that the unique solution in Theorem \ref{localsmall} is 
$$
h(\cdot,t)\in C^\infty(\Gamma)\,\,\text{ if }\delta\leq t\leq T^*,\,\,\forall\delta>0.
$$

\subsection{One-phase Muskat problem} For the one-phase Muskat problem (\ref{HS_Eulerian_Onephase}a-e), we consider $h_{\delta0}\in H^3$ satisfying the Rayleigh-Taylor stability condition \eqref{RTstable}. Once again redoing the energy estimates with three tangential derivatives, there exists a time 
$T^*$, and the bound
$$
\max_{0\leq s\leq T^*}\{|h(s)|_3^2\}+\int_0^t|h(s)|_{3.5}^2ds\leq C|h_{\delta0}|_3^2.
$$
Interpolating with the bound obtained in Theorem \ref{localonephase}, we obtain the bound \eqref{cinfty2}. Now, given $h_0\in H^2(\Gamma)$ satisfying the Rayleigh-Taylor stability condition \eqref{RTstable}, due to Theorem \ref{localonephase}, we have a solution $h\in C([0,T^*],H^2(\Gamma))\cap C([\delta,T^*],H^{2.5}(\Gamma))\cap L^2(\delta,T^*; H^{3}(\Gamma))$.
By bootstrapping,  we see that  $h(\cdot , t)\in C^\infty(\Gamma)$ if $t\geq\delta>0$.

\section*{Acknowledgements}
AC was supported by the Ministry of Science and Technology (Taiwan) under grant MOST-103-2115-M-008-010-MY2 and by the National Center 
of Theoretical Sciences.   RGB was supported by OxPDE via the EPSRC grant EP/I01893X/1.
 SS was supported by the National Science Foundation under grant and DMS-1301380, by OxPDE at the University of Oxford, 
and by the Royal Society Wolfson Merit Award.     Some of this work was completed 
during the program {\it Free Boundary Problems and Related Topics} at the Isaac Newton Institute for Mathematical Sciences at Cambridge, UK.   We are
grateful to the organizers,  Gui-Qiang Chen, Henrik Shahgholian and Juan Luis V\'{a}zquez,   for both the invitation to participate in the program and to contribute to  this special volume.

\appendix
\section{Auxiliary results}

\subsection{The $H^{d/2}$-norm of products}
We need the following
\begin{proposition}\label{H0.5_fg}
For all $\delta > 0$, there exists $C_\delta > 0$ such that
\begin{equation*}
|fg|_{0.5} \le C_\delta |f|_{0.5+\delta} |g|_{0.5}\,.
\end{equation*}
and, in two dimensions,
\begin{equation*}
\|fg\|_{1,\pm} \le C_\delta \|f\|_{1+\delta,\pm} \|g\|_{1,\pm}\,.
\end{equation*}
\end{proposition}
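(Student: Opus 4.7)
This is the standard borderline bilinear Sobolev estimate at the critical regularity $s=d/2$, where $H^{d/2}$ just fails to be an algebra but algebra-like control is recovered after promoting one factor by an arbitrarily small amount of regularity. The two inequalities carry the same structural content (the first with $d=1$, $s=1/2$ on the interface $\Gamma$; the second with $d=2$, $s=1$ on the half-plane $\bbR^2_\pm$), so I would treat them uniformly via a Littlewood--Paley decomposition. To lighten notation in this sketch I write $\|\cdot\|_{H^s}$ for either of the relevant Sobolev norms, and reduce the half-plane case to $\bbR^2$ via a standard Stein extension (which is bounded $H^s(\bbR^2_\pm)\to H^s(\bbR^2)$ for all admissible $s$).

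Let $\{\Delta_j\}_{j\ge-1}$ be a standard dyadic partition of unity in frequency and let $S_j=\sum_{k<j}\Delta_k$ denote the corresponding low-frequency cutoff. Using Bony's paraproduct decomposition $fg = T_fg + T_gf + R(f,g)$, where $T_uv=\sum_jS_{j-2}u\cdot\Delta_jv$ and $R(u,v)=\sum_{|j-k|\le1}\Delta_ju\cdot\Delta_kv$, the entire argument rests on the critical Sobolev embedding $H^{d/2+\delta}\hookrightarrow L^\infty$, valid precisely when $\delta>0$.

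The low--high piece $T_fg$ is immediate: each dyadic block $S_{j-2}f\cdot\Delta_jg$ is essentially frequency-localized at scale $2^j$, and $\|S_{j-2}f\|_{L^\infty}\le C\|f\|_{L^\infty}\le C_\delta\|f\|_{H^{d/2+\delta}}$, so dyadic summation gives $\|T_fg\|_{H^{d/2}}\le C_\delta\|f\|_{H^{d/2+\delta}}\|g\|_{H^{d/2}}$. The resonant remainder $R(f,g)$ is treated by a Schur-type estimate: Bernstein in $d$ dimensions controls one factor in $L^\infty$ per dyadic block, reducing to a convolution-type sum in the dyadic indices whose geometric series converges because the total regularity budget $(d/2+\delta)+d/2>d$ strictly exceeds the critical threshold.

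The main obstacle is the high--low paraproduct $T_gf$: the factor $g\in H^{d/2}$ need not lie in $L^\infty$, so $\|S_{j-2}g\|_{L^\infty}$ may grow in $j$. Bernstein in $d$ dimensions gives $\|\Delta_kg\|_{L^\infty}\lesssim 2^{kd/2}\|\Delta_kg\|_{L^2}$, and Cauchy--Schwarz yields the logarithmic-type loss
\[
\|S_{j-2}g\|_{L^\infty}\le\sum_{k<j-2}\|\Delta_kg\|_{L^\infty}\le C\sqrt{j}\,\|g\|_{H^{d/2}}.
\]
Inserting this into the dyadic expression for $\|T_gf\|_{H^{d/2}}^2$ produces
\[
\|T_gf\|_{H^{d/2}}^2\lesssim \|g\|_{H^{d/2}}^2\sum_{j\ge0}j\cdot 2^{jd}\|\Delta_jf\|_{L^2}^2,
\]
and the troublesome factor $j$ is absorbed by the $\delta$-gain of $f$ via the elementary bound $j\le C_\delta\,2^{2jd\delta}$, producing $C_\delta\|f\|_{H^{d/2+\delta}}^2\|g\|_{H^{d/2}}^2$ as required. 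The constant $C_\delta$ blows up like a power of $\delta^{-1}$ as $\delta\downarrow0$, reflecting the familiar fact that $H^{d/2}$ embeds into $\mathrm{BMO}$ but not into $L^\infty$.
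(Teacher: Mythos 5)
Your proof is correct, but it follows a genuinely different route than the paper's. The paper works directly with the Kato--Ponce (Kenig--Ponce--Vega) fractional product rule, splitting $\Lambda^{1/2}(fg)$ into two bilinear terms and then invoking the Sobolev embeddings $H^\delta(\bbR)\hookrightarrow L^{2/(1-2\delta)}$ and $H^{1/2}(\bbR)\hookrightarrow L^q$ ($q<\infty$) to land back on the stated norms; the $L^2$ piece of the norm is handled separately by $H^{1/2+\delta}\hookrightarrow L^\infty$. You instead unpack the bilinear estimate from scratch via Bony's paraproduct decomposition, isolating the three pieces $T_fg$, $T_gf$, $R(f,g)$ and showing exactly where the $\delta$-gain is consumed: the failure of $H^{d/2}\hookrightarrow L^\infty$ forces the logarithmic loss $\|S_{j-2}g\|_{L^\infty}\lesssim\sqrt{j}\,\|g\|_{H^{d/2}}$ in the high--low term, which the extra $\delta$ of regularity on $f$ then absorbs. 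Both arguments are valid; the paper's is shorter because it black-boxes the fractional Leibniz rule, while yours is more self-contained and makes the borderline mechanism transparent. Two small remarks on your sketch: (i) the bound $j\le C_\delta 2^{2jd\delta}$ would actually deliver $\|f\|_{H^{d/2+d\delta}}$ rather than $\|f\|_{H^{d/2+\delta}}$, so you should instead use $j\le C_\delta 2^{2j\delta}$ (which is equally true and gives exactly the stated exponent); (ii) for the half-plane case, the Stein extension reduction is sound because the extensions $Ef$, $Eg$ satisfy $(Ef\cdot Eg)\restriction_{\bbR^2_\pm}=fg$, so $\|fg\|_{1,\pm}\le\|Ef\cdot Eg\|_{H^1(\bbR^2)}$ and you can apply the full-space estimate; this step is worth stating explicitly since the paper glosses over it as well.
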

\begin{proof}
The $L^2$ part can be bounded as follows:
\begin{equation}\label{app1}
|fg|^2_0\leq \|f\|_{L^\infty(\bbR)}^2|g|_0^2\leq C_\delta|f|_{0.5+\delta}^2|g|_{0.5}^2,
\end{equation}
where we have used the Sobolev embedding
$$
H^{0.5+\delta}(\bbR)\hookrightarrow L^\infty(\bbR).
$$
The seminorm term can be bounded using Kato-Ponce inequality for $\Lambda=\sqrt{-\partial_x^2}$
$$
|\Lambda^{0.5}(fg)|_0\leq C_\delta\left(\|g\|_{L^{\frac{1}{\delta}}(\bbR)}\|\Lambda^{0.5} f\|_{L^{\frac{2}{1-2\delta}}(\bbR)}+\|f\|_{L^{\infty}(\bbR)}\|\Lambda^{0.5} g\|_{L^{2}(\bbR)}\right).
$$
The Sobolev embeddings
$$
H^{\delta}(\bbR)\hookrightarrow L^q(\bbR),\,q\in\left[2,\frac{2}{1-2\delta}\right],\,H^{0.5}(\bbR)\hookrightarrow L^q(\bbR),\,q\in\left.\left[2,\infty\right.\right),
$$
give us
\begin{equation}\label{app2}
|\Lambda^{0.5}(fg)|_0\leq C_\delta\|g\|_{0.5}\|f\|_{0.5+\delta}.
\end{equation}
Collecting the estimates \eqref{app1} and \eqref{app2}, we conclude the first statement. With the same ideas and the embedding
$$
H^{\delta}(\bbR^2)\hookrightarrow L^q(\bbR^2),\,q\in\left[2,\frac{2}{1-\delta}\right],\,H^{1}(\bbR^2)\hookrightarrow L^q(\bbR^2),\,q\in\left.\left[2,\infty\right.\right),
$$
we conclude the result.
\end{proof}
\subsection{The Hodge decomposition elliptic estimates}
\begin{proposition}\label{Hodge}Let $\Omega$ be a domain with boundary $\partial\Omega$ of Sobolev class $H^{k+0.5}$. 
Then for $v \in H^k(\Omega) $, 
\begin{equation*}
\|v\|_{H^k(\Omega)} \le C \Big[\|v\|_{L^2(\Omega)} + \|\curl v\|_{H^{k-1}(\Omega)} + \|\div v\|_{H^{k-1}(\Omega)} + \|v\cdot N\|_{H^{k-0.5}(\bdy\Omega)}\Big]\,,
\end{equation*}
where $N$ denotes the outward unit normal to $\bdy \Omega$.
\end{proposition}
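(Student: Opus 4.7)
The plan is to split $v$ into a gradient part carrying the divergence and normal-trace data, and a divergence-free part carrying the curl, and then to estimate each via an auxiliary elliptic boundary-value problem. Concretely, let $\phi$ be the solution to the Neumann problem
\begin{equation*}
-\Delta \phi = \div v \text{ in } \Omega, \qquad \frac{\partial \phi}{\partial N} = v \cdot N \text{ on } \partial\Omega,
\end{equation*}
normalized to have zero mean. The divergence theorem supplies the compatibility condition $\int_\Omega \div v\,dx = \int_{\partial\Omega} v\cdot N\,dS$, so a unique mean-zero solution exists. Setting $w := v - \nabla\phi$, the residual field is divergence-free with vanishing normal trace, and it satisfies $\curl w = \curl v$.

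\textbf{Elliptic estimates for the two pieces.} Standard elliptic regularity on the $H^{k+0.5}$-class domain $\Omega$ gives
\begin{equation*}
\|\nabla \phi\|_{H^k(\Omega)} \le C\big[\|\div v\|_{H^{k-1}(\Omega)} + \|v\cdot N\|_{H^{k-0.5}(\partial\Omega)}\big],
\end{equation*}
using the zero-mean normalization in place of a Poincar\'e constant. For $w$, working in the two-dimensional setting of the paper, I would introduce a stream function $\psi$ as the solution to the Dirichlet problem $\Delta \psi = \curl v$ in $\Omega$ with $\psi = 0$ on $\partial\Omega$ (if $\Omega$ is multiply connected, one takes $\psi$ equal to appropriate constants on each component of $\partial\Omega$). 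Then $\nabla^\perp \psi$ is divergence-free with vanishing normal trace and has the same curl as $w$; uniqueness of the Helmholtz decomposition on a simply connected domain then identifies $w = \nabla^\perp\psi$, and Dirichlet elliptic regularity yields
\begin{equation*}
\|w\|_{H^k(\Omega)} \le \|\psi\|_{H^{k+1}(\Omega)} \le C\|\curl v\|_{H^{k-1}(\Omega)}.
\end{equation*}

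\textbf{Assembly and main obstacle.} Combining the two estimates via the triangle inequality $\|v\|_{H^k} \le \|w\|_{H^k} + \|\nabla\phi\|_{H^k}$ produces the bound claimed in the proposition; the $\|v\|_{L^2}$ term on the right-hand side absorbs either a finite-dimensional space of harmonic vector fields present when $\Omega$ is multiply connected or, equivalently, the zero-order constants from the two auxiliary elliptic problems. The main technical obstacle is justifying the elliptic regularity used in both auxiliary problems at the level of $H^{k+0.5}$ boundary regularity: this requires the usual machinery of local flattening charts near $\partial\Omega$, tangential difference quotients up to the boundary, and reconstruction of normal derivatives from the equation itself, keeping track of the exact boundary regularity needed so that the product estimates for variable coefficients (Proposition \ref{H0.5_fg}) apply. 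Once this elliptic regularity is in hand, the Helmholtz argument above closes immediately.
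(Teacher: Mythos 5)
Your argument is sound in outline, but note that the paper does not actually prove Proposition \ref{Hodge}: it is quoted from Cheng \& Shkoller \cite{ChSh2014}, where the estimate is obtained by working directly on the first-order div--curl--normal-trace system (a variational formulation combined with tangential difference quotients and regularization on Sobolev-class domains), rather than by first splitting $v$. Your route --- a Helmholtz decomposition into a Neumann potential $\nabla\phi$ carrying $\div v$ and $v\cdot N$, plus a stream-function part $\nabla^\perp\psi$ carrying $\curl v$, plus a finite-dimensional space of harmonic fields absorbed by the $\|v\|_{L^2}$ term --- is a standard and correct alternative in two dimensions, and it has the virtue of reducing everything to two scalar second-order problems. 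What it does not buy you for free is exactly the point you flag at the end: $H^{k+1}$ elliptic regularity for the Dirichlet and Neumann problems on a domain whose boundary is only of Sobolev class $H^{k+0.5}$ lies below the classical $C^{k,\alpha}$ setting and is itself the main content of \cite{ChSh2014}, so your proof ultimately rests on the same nontrivial input as the citation. Two smaller points. First, your Neumann problem has a sign inconsistency: with $-\Delta\phi=\div v$ in $\Omega$ and $\partial\phi/\partial N=v\cdot N$ on $\partial\Omega$, the solvability condition is $\int_\Omega \div v\,dx+\int_{\partial\Omega}v\cdot N\,dS=0$, which contradicts the divergence theorem unless both integrals vanish; you want $\Delta\phi=\div v$, after which $w=v-\nabla\phi$ is indeed divergence-free with vanishing normal trace. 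Second, the proposition is applied in the paper on unbounded domains such as $\bbR^2_\pm$ and $\bbT\times[c_b,0]$, where the zero-mean normalization and the Poincar\'e step must be replaced by a homogeneous-Sobolev or Fourier-side argument; this is routine for a half-plane or a slab but should be stated.
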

\begin{proposition}\label{Hodge2}Let $\Omega$ be a domain with boundary $\partial\Omega$ of Sobolev class $H^{k+0.5}$. Let $\psi_0$ be a given smooth mapping and define
$$
\curl_{\psi_0} v=\curl(v\circ\psi_0)=(A_0)^j_1(v\circ\psi_0)^2_{,j}-(A_0)^j_2(v\circ\psi_0)^1_{,j},
$$
$$
\div_{\psi_0} v=\div(v\circ\psi_0)=(A_0)^i_j(v\circ\psi_0)^j_{,i},
$$
where $A_0=(\nabla\psi_0)^{-1}$. Then for $v \in H^k(\Omega) $, 
\begin{equation*}
\|v\|_{H^k(\Omega)} \le C \Big[\|v\|_{L^2(\Omega)} + \|\curl_{\psi_0} v\|_{H^{k-1}(\Omega)} + \|\div_{\psi_0} v\|_{H^{k-1}(\Omega)} + \|v\cdot N\|_{H^{k-0.5}(\bdy\Omega)}\Big]\,,
\end{equation*}
where $N$ denotes the outward unit normal to $\bdy \Omega$.
\end{proposition}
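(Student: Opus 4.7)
The plan is to obtain Proposition~\ref{Hodge2} as a $\psi_0$-deformation of Proposition~\ref{Hodge}. The key observation is that $\curl_{\psi_0}$ and $\div_{\psi_0}$ are exactly the pullbacks under $\psi_0$ of the flat $\curl$ and $\div$: setting $u := v\circ\psi_0^{-1}$ on $\widetilde\Omega := \psi_0(\Omega)$, the chain rule combined with $A_0=(\nabla\psi_0)^{-1}$ yields
\begin{equation*}
(\curl u)\circ\psi_0 \;=\; \curl_{\psi_0} v, \qquad (\div u)\circ\psi_0 \;=\; \div_{\psi_0} v.
\end{equation*}
Thus the twisted operators inherit the ellipticity of the flat ones through an invertible linear change of frame.

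First I would apply Proposition~\ref{Hodge} to $u$ on $\widetilde\Omega$, whose boundary inherits $H^{k+0.5}$ regularity because $\psi_0$ is smooth, and transfer the interior terms back to the reference domain via the standard Sobolev composition equivalence $\|f\|_{H^s(\widetilde\Omega)}\sim\|f\circ\psi_0\|_{H^s(\Omega)}$ for $0\le s\le k$, whose constants depend on $\|\psi_0\|_{H^{k+0.5}}$ and on a positive lower bound for $J_0=\det\nabla\psi_0$. This immediately produces the desired norms $\|v\|_{L^2(\Omega)}$, $\|\curl_{\psi_0}v\|_{H^{k-1}(\Omega)}$, and $\|\div_{\psi_0}v\|_{H^{k-1}(\Omega)}$ on the right-hand side.

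The main obstacle is the boundary term. A direct pullback of the flat estimate produces $\|v\cdot(\widetilde N\circ\psi_0)\|_{H^{k-0.5}(\bdy\Omega)}$, where $\widetilde N$ is the outward unit normal to $\bdy\widetilde\Omega$; the Piola-type identity $J_0A_0^T N=|J_0A_0^T N|\,(\widetilde N\circ\psi_0)$ shows that $\widetilde N\circ\psi_0$ generally has a tangential component relative to $N$, which cannot be absorbed by any term already on the right-hand side. To close the estimate with $\|v\cdot N\|_{H^{k-0.5}(\bdy\Omega)}$ as stated, I would instead verify directly that the first-order system $(\curl_{\psi_0},\div_{\psi_0})$ together with the scalar Dirichlet-type boundary operator $v\mapsto v\cdot N$ satisfies the Lopatinski--Shapiro covering condition. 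At each boundary point the interior symbol is $v\mapsto\bigl((A_0^T\xi)\cdot v,\,(A_0^T\xi)^\perp\cdot v\bigr)$, which is invertible for $\xi\neq 0$ because $A_0$ is invertible, and the covering condition reduces to solvability on a half-line of a constant-coefficient ODE with a single scalar Dirichlet datum; this holds by continuity from the flat case $(\curl,\div,v\cdot N)$ along the path $t\mapsto (1-t)\id + tA_0$ of invertible matrices. Granted this, the Agmon--Douglis--Nirenberg framework yields the stated estimate, with constant depending on $\psi_0$ and on $\min J_0$, and subsuming the partial result coming from the pullback argument.
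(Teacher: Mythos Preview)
The paper does not prove this proposition; immediately after stating Propositions~\ref{Hodge} and~\ref{Hodge2} it simply records that ``The proof of Propositions~\ref{Hodge} and~\ref{Hodge2} are given in Cheng \& Shkoller~\cite{ChSh2014}.'' So there is no in-paper argument to compare your proposal against.

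Your proposal goes well beyond what the paper itself offers, and your diagnosis of the naive pullback route is correct: transporting Proposition~\ref{Hodge} from $\widetilde\Omega=\psi_0(\Omega)$ produces the trace $v\cdot(\widetilde N\circ\psi_0)$ rather than $v\cdot N$, and since $J_0A_0^TN$ is only a scalar multiple of $\widetilde N\circ\psi_0$, the two normals differ unless $\psi_0$ happens to preserve the normal direction along $\partial\Omega$. Your fallback to the Agmon--Douglis--Nirenberg framework is the right kind of tool, and in two dimensions the twisted first-order system $(\div_{\psi_0},\curl_{\psi_0})$ is genuinely elliptic with principal symbol $\xi\mapsto A_0^T\xi$ acting invertibly. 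One caveat: the homotopy argument you sketch for the Lopatinski--Shapiro condition is not quite sufficient as stated. The covering condition is an \emph{open} condition on the data, not a connected one, so mere continuity of the path $t\mapsto(1-t)\id+tA_0$ through invertible matrices does not rule out failure at some intermediate $t$. In practice one verifies the condition directly at $t=1$: after freezing coefficients at a boundary point and flattening, the half-line ODE for decaying solutions of the frozen system with scalar datum $v\cdot N$ at $x_2=0$ is uniquely solvable precisely because $A_0$ is invertible and $N$ is transverse to the tangent space. This direct check is short in two dimensions and is presumably what the cited reference carries out.
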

The proof of Propositions \ref{Hodge}  and \ref{Hodge2} are given in Cheng \& Shkoller \cite{ChSh2014}.

\begin{proposition}\label{normaltrace} Suppose that $v'\in L^2(\Omega)$ with $\text{div}v\in L^2(\Omega)$. Then $v'\cdot N \in H^ {-\frac{1}{2}} (\bdy\Omega)$
and
\begin{equation*}
\|v' \cdot N\|_{H^{-1/2}(\bdy\Omega)}\leq C\left(\|v' \|_{L^2(\Omega)}+\| \operatorname{div} v \|_ { L^2(\Omega) }  \right).
\end{equation*}
\end{proposition}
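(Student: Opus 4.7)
The strategy is the standard duality-plus-Green's-identity argument that defines and bounds the normal trace on the negative-index Sobolev space. Throughout, I will write $v$ (not $v'$) for the vector field in question; this matches the way the statement is used in the body of the paper.

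First, I will define the normal trace as a bounded linear functional on $H^{1/2}(\p\Omega)$. Given $\phi \in H^{1/2}(\p\Omega)$, invoke surjectivity of the trace operator to produce an extension $\Phi \in H^1(\Omega)$ with $\Phi|_{\p\Omega} = \phi$ and $\|\Phi\|_{H^1(\Omega)} \le C \|\phi\|_{H^{1/2}(\p\Omega)}$ for a constant $C$ depending only on $\Omega$. For $v \in C^\infty(\overline{\Omega})$, Green's identity gives
\begin{equation*}
\int_{\p\Omega} (v\cdot N)\, \phi \, dS = \int_\Omega \operatorname{div}(v) \Phi \, dx + \int_\Omega v \cdot \nabla \Phi \, dx,
\end{equation*}
and Cauchy--Schwarz yields
\begin{equation*}
\Bigl|\int_{\p\Omega} (v\cdot N)\,\phi\, dS\Bigr| \le \bigl(\|\operatorname{div} v\|_{L^2(\Omega)} + \|v\|_{L^2(\Omega)}\bigr)\|\Phi\|_{H^1(\Omega)} \le C\bigl(\|\operatorname{div} v\|_{L^2(\Omega)} + \|v\|_{L^2(\Omega)}\bigr)\|\phi\|_{H^{1/2}(\p\Omega)}.
\end{equation*}
The right-hand side is independent of the particular extension $\Phi$ chosen, so the map $\phi \mapsto \int_{\p\Omega}(v\cdot N)\phi \, dS$ defines an element $\mathcal{T}(v) \in H^{-1/2}(\p\Omega) = (H^{1/2}(\p\Omega))'$ with the asserted bound.

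Next I will extend this definition to the whole space
\begin{equation*}
H(\operatorname{div};\Omega) \;=\; \bigl\{ v \in L^2(\Omega) : \operatorname{div} v \in L^2(\Omega)\bigr\},\qquad \|v\|_{H(\operatorname{div};\Omega)}^2 = \|v\|_{L^2}^2 + \|\operatorname{div} v\|_{L^2}^2,
\end{equation*}
by density. Specifically, $C^\infty(\overline{\Omega})$ is dense in $H(\operatorname{div};\Omega)$ (a standard mollification-and-cutoff argument, available for Lipschitz domains, which covers the smooth reference domains $\Omega^\pm$, $\mathbb R^2_\pm$, and $\bbT\times[c_b,c_t]$ used in this paper). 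Given $v \in H(\operatorname{div};\Omega)$, choose smooth $v_n \to v$ in $H(\operatorname{div};\Omega)$. The inequality above shows $\{\mathcal{T}(v_n)\}$ is Cauchy in $H^{-1/2}(\p\Omega)$, so it has a limit $\mathcal{T}(v)$ that is independent of the approximating sequence and satisfies
\begin{equation*}
\|\mathcal{T}(v)\|_{H^{-1/2}(\p\Omega)} \le C\bigl(\|v\|_{L^2(\Omega)} + \|\operatorname{div} v\|_{L^2(\Omega)}\bigr).
\end{equation*}
Finally, identifying $\mathcal{T}(v)$ with the symbol $v\cdot N$ (consistent with the classical trace for smooth $v$) yields the claim.

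There is no essential obstacle here; the two routine points to verify carefully are (i) surjectivity of the trace $H^1(\Omega) \to H^{1/2}(\p\Omega)$ with a bounded right-inverse, so that the definition of $\mathcal{T}(v)$ is independent of the chosen extension $\Phi$, and (ii) density of $C^\infty(\overline{\Omega})$ in $H(\operatorname{div};\Omega)$, which is where the mild regularity assumption on $\p\Omega$ enters. Both hold in all the geometries used throughout the paper, so the proposition applies in particular to $v = w''$ on $\mathbb R^2_\pm$ with $\operatorname{div} w'' = 0$, which is the form needed in Section~\ref{sec4.6}.
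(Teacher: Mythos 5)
Your argument is correct and complete: it is the classical normal-trace theorem for $H(\operatorname{div};\Omega)$, proved by the standard route (Green's identity against a bounded right-inverse of the trace operator, then density of $C^\infty(\overline\Omega)$ in $H(\operatorname{div};\Omega)$). The paper itself states Proposition \ref{normaltrace} in the appendix without any proof or citation, so there is nothing to compare against; your write-up supplies exactly the argument one would expect. Your decision to read the primed field as the vector field itself is the right call: as written, the hypothesis pairs $v'\in L^2$ with $\operatorname{div} v\in L^2$, which is not what the duality argument needs, but the only place the proposition is invoked (Section \ref{sec4.6}, with $w''$ in place of $v'$ and $\operatorname{div} w''=0$) makes clear that the intended hypothesis is that the field being traced and its divergence both lie in $L^2$ --- precisely what you prove. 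The two technical ingredients you flag (surjectivity of the trace with a bounded right-inverse, and density of smooth fields in $H(\operatorname{div})$) do hold on all the reference domains used in the paper, including the unbounded half-planes $\bbR^2_\pm$, so no further care is needed.
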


\subsection{A commutator estimate}
The following is Lemma 5.1 in Coutand \& Shkoller \cite{CoSh2010}:
\begin{proposition}\label{commutator}Let $\Omega$ be a domain and assume that its boundary, $\partial\Omega$, is smooth. Then 
\begin{equation*}
|\jeps(fg')-f\jeps g'|_0\leq C\|f\|_{W^{1,\infty}}|g|_0.
\end{equation*}
\end{proposition}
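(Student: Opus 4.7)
Since $\jeps = \mathcal{J}_\kappa$ is a convolution operator with a smooth symmetric mollifier (and because the statement is local in character along the smooth boundary, a standard partition-of-unity reduction allows one to assume we are working on $\mathbb{R}$ or $\mathbb{T}$), the plan is to prove the model 1-D Friedrichs-type commutator estimate
\[
\Bigl\| \jeps(fg')(x) - f(x)\jeps g'(x) \Bigr\|_{L^2} \le C \|f\|_{W^{1,\infty}} \|g\|_{L^2}.
\]
The point is to transfer the derivative off $g$, since only an $L^2$-bound on $g$ itself is assumed.

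The first step is to write the commutator in its integral form: using the definition of $\jeps$,
\[
\jeps(fg')(x) - f(x)\jeps g'(x) = \int_{\mathbb{R}} \bigl[f(y) - f(x)\bigr]\,\mathcal{J}_\kappa(x-y)\,g'(y)\,dy.
\]
Next I would integrate by parts in the $y$ variable (absorbing boundary terms into the partition-of-unity setup, or noting they vanish by compact support / periodicity) to move $\partial_y$ off $g$. This produces exactly two terms:
\[
T_1(x) = -\int_\mathbb{R} f'(y)\mathcal{J}_\kappa(x-y) g(y)\,dy, \qquad
T_2(x) = -\int_\mathbb{R} \bigl[f(y)-f(x)\bigr]\,\partial_y \mathcal{J}_\kappa(x-y)\,g(y)\,dy.
\]

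The term $T_1$ equals $-\jeps(f'g)(x)$, and by Young's convolution inequality it is bounded in $L^2$ by $\|\mathcal{J}_\kappa\|_{L^1}\|f'\|_{L^\infty}\|g\|_{L^2} \le \|f\|_{W^{1,\infty}}\|g\|_{L^2}$. For $T_2$, the key observation is the scaling balance: from $|f(y)-f(x)| \le \|f'\|_{L^\infty}|x-y|$ and $\partial_y\mathcal{J}_\kappa(x-y) = -\kappa^{-2}\mathcal{J}'((x-y)/\kappa)$, one gets a pointwise bound by $\|f'\|_{L^\infty} K_\kappa(x-y)|g(y)|$, where $K_\kappa(z) := \kappa^{-1}|\cdot/\kappa|\cdot |\mathcal{J}'(\cdot/\kappa)|$ is a rescaling of the fixed integrable function $\psi(\zeta) := |\zeta \mathcal{J}'(\zeta)|$. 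Thus $\|K_\kappa\|_{L^1} = \|\psi\|_{L^1}$ is finite and uniform in $\kappa$, and another application of Young's inequality gives $\|T_2\|_{L^2} \le C\|f'\|_{L^\infty}\|g\|_{L^2}$.

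The main (only) obstacle is verifying the cancellation of the boundary contributions from the integration by parts on a general smooth domain $\partial\Omega$: this is handled by a partition of unity that flattens the boundary locally, so that on each chart the commutator reduces to the 1-D convolution case above, and the error terms from the cut-offs themselves (derivatives falling on partition functions) are manifestly of the same form $\|f\|_{W^{1,\infty}}|g|_0$ and are absorbed into the constant. Combining the $L^2$ bounds on $T_1$ and $T_2$ yields the claimed estimate.
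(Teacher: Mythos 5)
Your argument is correct and is essentially the standard proof of this Friedrichs-type commutator lemma: the paper itself gives no proof but cites Lemma 5.1 of Coutand \& Shkoller \cite{CoSh2010}, whose proof is exactly your integration-by-parts decomposition into $-\jeps(f'g)$ plus the kernel term with $|f(y)-f(x)|\le \|f'\|_{L^\infty}|x-y|$ balancing the extra $\kappa^{-1}$ in $\partial_y\mathcal{J}_\kappa$, followed by Young's inequality. The only hypothesis worth flagging is that $\int |\zeta\,\mathcal{J}'(\zeta)|\,d\zeta<\infty$, which holds for the smooth compactly supported mollifiers used here.
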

\subsection{An elliptic estimate}
Let's consider 
$$
\Omega=\bbT\times[-1,0],
$$
and the elliptic problem
\begin{subequations}\label{elliptic}
\begin{alignat}{2}
- \operatorname{div} (A \nabla u)&= f \qquad&&\text{in}\quad\Omega\,,\\
 u &= 0 &&\text{on}\quad \partial\Omega\,.
\end{alignat}
\end{subequations}
Then, we have the following elliptic estimate
\begin{lemma}\label{lemaa6} Suppose that the matrix $A\in H^{1.5}(\Omega)$ with  $A > 0$, and that $f\in H^{0.5}(\Omega)$. 
Then the solution to (\ref{elliptic}a-b) verifies
$$
\|\Lambda^{1.25}\nabla u\|_{L^2(\Omega) }\leq C\left(\|\Lambda^{0.25}f\|_{L^2(\Omega)}+\|\Lambda^{1.25}A\|_{L^2(\Omega)}\|\nabla u\|_{L^\infty(\Omega)}\right.\left.
+\|\Lambda^{0.5}\nabla u\|_{L^2(\Omega)}\|\Lambda^{0.25}\nabla A\|_{L^2(\Omega)}\right) \,,
$$
and
$$
\|\Lambda^{1.5}\nabla u\|_{L^2(\Omega) }\leq C\left(\|\Lambda^{0.5}f\|_{L^2(\Omega)}+\|\Lambda^{1.5}A\|_{L^2(\Omega)}\|\nabla u\|_{L^\infty(\Omega)}\right.\left.
+\|\Lambda^{0.75}\nabla u\|_{L^2(\Omega)}\|\Lambda^{0.25}\nabla A\|_{L^2(\Omega)}\right) \,.
$$
\end{lemma}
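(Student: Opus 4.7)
\smallskip

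\noindent\textbf{Plan.} The plan is an energy estimate obtained by applying the tangential fractional operator $\Lambda^s$ (with $s=5/4$ and $s=3/2$) to the equation. Two features make this clean on $\Omega=\mathbb{T}\times[-1,0]$: since $\Lambda=\sqrt{-\partial_{x_1}^2}$ acts only in the periodic variable, it commutes with $\nabla$ and is self-adjoint on $L^2$; and, being tangential, $\Lambda^s$ preserves the Dirichlet condition $u|_{\partial\Omega}=0$, so that $\Lambda^{2s}u$ is an admissible test function in $H^1_0(\Omega)$.

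Pairing the equation $-\mathrm{div}(A\nabla u)=f$ with $\Lambda^{2s}u$, integrating by parts (the boundary contribution vanishes), and redistributing the $\Lambda$'s by self-adjointness, one obtains
\begin{equation*}
\int_\Omega A\,\Lambda^s\nabla u\cdot\Lambda^s\nabla u\,dx=\int_\Omega \Lambda^s f\cdot\Lambda^s u\,dx-\int_\Omega [\Lambda^s,A]\nabla u\cdot\Lambda^s\nabla u\,dx,
\end{equation*}
after inserting the commutator split $\Lambda^s(A\nabla u)=A\Lambda^s\nabla u+[\Lambda^s,A]\nabla u$. Coercivity of $A$ bounds the LHS below by $\lambda\|\Lambda^s\nabla u\|_{L^2}^2$. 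For the source term, I would redistribute as $\int\Lambda^s f\cdot\Lambda^s u=\int\Lambda^{s-1}f\cdot\Lambda^{s+1}u$ and use $\|\Lambda^{s+1}u\|_{L^2}=\|\Lambda^s\partial_{x_1}u\|_{L^2}\leq\|\Lambda^s\nabla u\|_{L^2}$ (by Plancherel in $x_1$), producing the contribution $\|\Lambda^{s-1}f\|_{L^2}\|\Lambda^s\nabla u\|_{L^2}$, which becomes $\|\Lambda^{1/4}f\|_{L^2}$ and $\|\Lambda^{1/2}f\|_{L^2}$ after dividing out a factor of $\|\Lambda^s\nabla u\|_{L^2}$.

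The central obstacle is the commutator $\|[\Lambda^s,A]\nabla u\|_{L^2}$, which must be bounded in the precise mixed form stated. Because $\Lambda$ is a one-dimensional tangential operator, I reduce to 1D by slicing in $x_2$, apply a Kato--Ponce/fractional Leibniz commutator estimate on $\mathbb{T}$ fiber-wise of the schematic form
\begin{equation*}
\|[\Lambda^s,A]g\|_{L^2_{x_1}}\leq C\Bigl(\|\Lambda^s A\|_{L^2_{x_1}}\|g\|_{L^\infty_{x_1}}+\|\Lambda^{s-1}\partial_{x_1}A\|_{L^{p}_{x_1}}\|\Lambda^{\alpha}g\|_{L^{q}_{x_1}}\Bigr),
\end{equation*}
then integrate in $x_2$ by Fubini and Cauchy--Schwarz. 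The high--low Littlewood--Paley paraproduct (all derivatives on $A$, the other factor in $L^\infty$) produces the first stated term $\|\Lambda^s A\|_{L^2}\|\nabla u\|_{L^\infty}$. The balanced paraproduct, combined with the 1D Sobolev embedding $H^{1/2+\varepsilon}(\mathbb{T})\hookrightarrow L^\infty(\mathbb{T})$ to trade integrability for an additional fraction of regularity on the $u$-factor, yields a product of the type $\|\Lambda^{s-1}\nabla A\|_{L^2}\|\Lambda^\beta \nabla u\|_{L^2}$; careful choice of the paraproduct splitting parameter places $\beta$ at $1/2$ for $s=5/4$ and at $3/4$ for $s=3/2$, matching the statement. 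The delicate point, and what I expect to be the main technical hurdle, is selecting the Kato--Ponce variant so the exponents on $A$ and $\nabla u$ land exactly on $(1/4,1/2)$ and $(1/4,3/4)$, rather than on the scale-invariant pair $(s{-}1,0)$.

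Combining, Young's inequality absorbs one factor of $\|\Lambda^s\nabla u\|_{L^2}$ back into the coercive LHS and dividing through yields the two stated inequalities in one stroke, treating both cases by the same scheme.
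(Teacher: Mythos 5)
Your proposal is correct and follows essentially the same route as the paper: testing with $\Lambda^{2s}u$ (admissible since $\Lambda$ is tangential and preserves the Dirichlet condition), using self-adjointness and coercivity, and controlling the commutator $[\Lambda^s,A]\nabla u$ fiber-wise in $x_1$ via a Kato--Ponce/Kenig--Ponce--Vega estimate followed by the one-dimensional Sobolev embeddings that place the exponents at $(1/4,1/2)$ and $(1/4,3/4)$, then integrating in $x_2$ by Tonelli. The only detail the paper adds is a preliminary smoothing of $A$ (solving with $\tilde A\in C^\infty$ and passing to the limit) to justify the manipulations.
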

\begin{proof}
We proof only the first estimate, being the second one straightforward. We consider the approximate problem
\begin{subequations}\label{ellipticreg}
\begin{alignat}{2}
- (\tilde{A}^i_j \tilde{u}_{,j})_{,i}&= f \qquad&&\text{in}\quad\Omega\,,\\
 \tilde{u} &= 0 &&\text{on}\quad \partial\Omega\,.
\end{alignat}
\end{subequations}
where $\tilde{A}$ is a $C^\infty$ regularization of $A$. For a given $\phi\in H^1(\Omega),$ we consider the weak formulation of the problem (\ref{ellipticreg}a-b):
$$
\int_\Omega \tilde{A}^i_j \tilde{u}_{,j}\phi_{,i}dx=\int_\Omega f \phi dx.
$$
These problems have solutions $\tilde{u}$ which are smooth. We focus on high norm uniform estimate. To do that, we pick $\phi= \Lambda^3 \tilde{u}$, where $\widehat{\Lambda u} =|k|\hat{u}(k)$. Then, using the self-adjointness of the $\Lambda$ operator, the weak formulation reads
$$
\int_\Omega \Lambda^{1.5}\left(\tilde{A}^i_j \tilde{u}_{,j}\right)\Lambda^{1.5}\tilde{u}_{,i}dx=\int_\Omega \Lambda^{0.5}f \Lambda^{2.5}\tilde{u} dx.
$$
We write
\begin{eqnarray*}
I&=&\int_\Omega \Lambda^{1.5}\left(\tilde{A}^i_j \tilde{u}_{,j}\right)\Lambda^{1.5}\tilde{u}_{,i}dx\\
&=&\int_\Omega [\Lambda^{1.5},\tilde{A}^i_j]\tilde{u}_{,j}\Lambda^{1.5}\tilde{u}_{,i}dx+\int_\Omega \tilde{A}^i_j\Lambda^{1.5}\tilde{u}_{,j}\Lambda^{1.5}\tilde{u}_{,i}dx.
\end{eqnarray*}
Notice that the first term can be estimated by layers (\emph{i.e.} fixing $x_2\in[-1,0]$) using the Kenig-Ponce-Vega estimate (see \cite{kato1988commutator} and \cite{KenigPonceVega}) along the $x_1$ coordinate:
\begin{eqnarray*}
\|[\Lambda^{1.5},\tilde{A}^i_j]\tilde{u}_{,j}\|_{L^2(\bbT)}&\leq& C\left(\|\Lambda^{1.5}\tilde{A}^i_j\|_{L^2(\bbT)}\|\nabla\tilde{u}\|_{L^\infty(\bbT)}+\|\Lambda^{0.5}\nabla\tilde{u}\|_{L^4(\bbT)}\|\nabla A\|_{L^{4}(\bbT)}\right)\\
&\leq& C\left(\|\Lambda^{1.5}\tilde{A}^i_j\|_{L^2(\bbT)}\|\nabla\tilde{u}\|_{L^\infty(\bbT)}+\|\Lambda^{0.75}\nabla\tilde{u}\|_{L^2(\bbT)}\|\Lambda^{0.25}\nabla \tilde{A}\|_{L^{2}(\bbT)}\right)
\end{eqnarray*}
Using Tonelli's theorem, together with $\|\cdot\|_{L^2(\Omega)}^2=\int_{-1}^0\|\cdot\|^2_{L^2(\bbT)}dx_2$, we have
\begin{eqnarray*}
\|[\Lambda^{1.5},\tilde{A}^i_j]\tilde{u}_{,j}\|_{L^2(\Omega)}&\leq& C\left(\|\Lambda^{1.5}\tilde{A}^i_j\|_{L^2(\Omega)}\|\nabla\tilde{u}\|_{L^\infty(\Omega)}+\|\Lambda^{0.75}\nabla\tilde{u}\|_{L^2(\Omega)}\|\Lambda^{0.25}\nabla \tilde{A}\|_{L^2(\Omega)}\right).
\end{eqnarray*}
The second integral provides us with the estimate
$$
\|\Lambda^{1.5}\nabla \tilde{u}\|_{L^2}\leq C\left(\|\Lambda^{0.5}f\|_{L^2(\Omega)}+\|\Lambda^{1.5}\tilde{A}^i_j\|_{L^2(\Omega)}\|\nabla\tilde{u}\|_{L^\infty(\Omega)}\right.\left.
+\|\Lambda^{0.75}\nabla\tilde{u}\|_{L^2(\Omega)}\|\Lambda^{0.25}\nabla \tilde{A}\|_{L^2(\Omega)}\right).
$$
Passing to the limit $\tilde{A}\rightarrow A$, we conclude the desired uniform estimate for $\tilde{u}$.
\end{proof}

\end{document}